\documentclass[12pt,a4paper]{article}
\usepackage{latexsym, amsmath, amsfonts, amsthm, amssymb}
\usepackage{times}
\usepackage{a4wide}
\usepackage{graphicx, tocloft}
\usepackage{mathrsfs}
\usepackage[hidelinks]{hyperref}

\def \tr {\text{\rm Tr}}

\def \Vol {\text{\rm Vol}}

\def \RR {\mathbb R}

\def \EE {\mathbb E}

\def \CC {\mathbb C}
\def \PP {\mathbb P}

\def \eps {\varepsilon}

\def \cT {\mathcal T}

\def \cE {\mathcal E}

\def \cA {\mathcal A}
\def \cS {\mathcal S}

\newtheorem{theorem}{Theorem}[section]

\newtheorem{lemma}[theorem]{Lemma}

\newtheorem{proposition}[theorem]{Proposition}
\newtheorem{corollary}[theorem]{Corollary}

\theoremstyle{definition}
\newtheorem{remark}[theorem]{Remark}

\def\myffrac#1#2 in #3{\raise 2.6pt\hbox{$#3 #1$}\mkern-1.5mu\raise 0.8pt\hbox{$
		#3/$}\mkern-1.1mu\lower 1.5pt\hbox{$#3 #2$}}

\def\qed{\hfill $\vcenter{\hrule height .3mm
		\hbox {\vrule width .3mm height 2.1mm \kern 2mm \vrule width .3mm
			height 2.1mm} \hrule height .3mm}$ \bigskip}

\def \id {{\rm Id}}
\def \diam {{\rm diam}}

\def \div {{ \rm div}}

\def \cov {{ \rm Cov}\,}
\def \var {{ \rm Var}\,}

\DeclareMathOperator{\drift}{drift}
\hypersetup{pdftitle={Presenting a proof of the \\ Kannan-Lov\'asz-Simonovits conjecture},
pdfauthor={Pierre Bizeul, Boaz Klartag, Joseph Lehec}}
\title{Presenting a proof of the \\ Kannan-Lov\'asz-Simonovits conjecture}
\author{Pierre Bizeul \and Boaz Klartag \and Joseph Lehec}
\date{}

\begin{document}

\maketitle

\begin{abstract}
Together with Bourgain's slicing problem and the variance conjecture,
the Kannan-Lov\'asz-Simonovits (KLS) conjecture has been a driving force
for mathematical developments in high dimensional geometry in recent decades.
We present here a proof of this conjecture. The crucial step in the proof
is the very recent criterion from Song and Zhang involving high-order derivatives of tilt averages.
\end{abstract}

\section{Introduction}
The Kannan-Lov\'asz-Simonovits (KLS) conjecture from \cite{KLS} poses little challenge for
contemporary AI tools, given  the remarkable progress in our
understanding of high-dimensional convex bodies in recent decades.

\medskip
In this paper we provide the final step in the proof of this conjecture, which
predicts dimension-free bounds for concentration, the Poincar\'e
inequality and isoperimetry in convex bodies. In its original geometric
formulation, due to Kannan, Lov\'asz and Simonovits \cite{KLS}, the
conjecture asserts that the isoperimetric inequality in a convex body
is saturated, up to a universal constant, by half-spaces.

\medskip
Let $K\subseteq\RR^n$ be a bounded, open, convex set, and let $X$ be
a random vector distributed uniformly in $K$. We say that $K$ is
{\it isotropic} if
\begin{equation}
 \EE X=0,\qquad \EE X_iX_j=\delta_{ij}
 \qquad (i,j=1,\ldots,n).
\label{eq_1201} \end{equation}
Any convex body with a non-empty interior may be placed in isotropic
position by an invertible affine transformation. In this normalization,
the KLS conjecture has the following three equivalent formulations,
describing the same phenomenon in probabilistic, analytic or geometric terms:

\begin{enumerate}

\item {\it Concentration.}
For any $1$-Lipschitz function $f:K\to\RR$ with $\EE f(X)=0$,
\begin{equation}
 \PP\big(|f(X)|\geq t\big)\leq 2e^{-ct}
 \qquad\text{for all }t>0.
 \label{eq_KLS_concentration}
\end{equation}

\item {\it Poincar\'e inequality.}
For any smooth function $f:K\to\RR$ with $\int_K f=0$,
\begin{equation}
 \int_K f^2\,dx\leq C\int_K|\nabla f|^2\,dx.
 \label{eq_KLS_poincare}
\end{equation}

\item {\it Isoperimetry.}
For any open set $A\subseteq\RR^n$ with a smooth boundary,
\begin{equation}
 \Vol_{n-1}(\partial A\cap K)
 \geq c\min\big\{\Vol_n(A\cap K),\Vol_n(K\setminus A)\big\}.
 \label{eq_KLS_isoperimetry}
\end{equation}

\end{enumerate}

\medskip
Here, $c,C>0$ are universal constants, independent of the dimension
and of the convex body. The equivalence of these formulations,
possibly with different universal constants, follows from the works of
Cheeger \cite{Cheeger}, Buser \cite{Buser}, Ledoux \cite{Ledoux},
Gromov and V.~Milman \cite{GM}, and E.~Milman~\cite{EM}.

\medskip
We prove the conjecture in the more general setting of log-concave
probability measures. A probability measure $\mu$ on $\RR^n$ with
a density is called {\it log-concave} if its density has the form
$e^{-V}$, where $V:\RR^n\to\RR\cup\{+\infty\}$ is convex.
This class includes uniform measures on convex bodies as well as
Gaussian measures. The probability measure $\mu$ is isotropic if its barycenter
is at the origin and its covariance matrix is the identity; see (\ref{eq_1201}) above.
The {\it Poincar\'e constant} $C_P(\mu)$ is the smallest constant
for which
\begin{equation}
 \var_\mu(f)\leq C_P(\mu)\int|\nabla f|^2\,d\mu
 \label{eq_poincare}
\end{equation}
for all locally-Lipschitz functions $f\in L^2(\mu)$, where
$
 \var_\mu(f)=\int f^2\,d\mu-\left(\int f\,d\mu\right)^2.
$
Our main result is the following:

\begin{theorem}
\label{thm_main}
There exists a universal constant $C>0$ such that, for any dimension
$n\geq1$ and any isotropic log-concave probability measure $\mu$
on $\RR^n$,
$$
 C_P(\mu)\leq C.
$$
\end{theorem}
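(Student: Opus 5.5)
I will follow the now-standard route through Eldan's stochastic localization, combined with the reduction of the Poincaré constant to a bound on the covariance of \emph{tilts} of $\mu$. For $\theta\in\RR^n$ let $\mu_\theta$ be the tilted measure with density proportional to $e^{\langle\theta,x\rangle}$ with respect to $\mu$. Write $b(\theta)$ for its barycenter and $A(\theta)=\cov(\mu_\theta)=\nabla^2\Lambda(\theta)$, where $\Lambda(\theta)=\log\int e^{\langle\theta,x\rangle}d\mu$ is the log-Laplace transform.

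The first step is the known reduction, due to Eldan and refined by Chen and by Klartag. Run stochastic localization $d\theta_t=dW_t+b(\theta_t)\,dt$ with Gaussian factor $e^{-t|x|^2/2}$. Then $\mu_t$ becomes $t$-uniformly log-concave, so $C_P(\mu_t)\le 1/t$. By the Bakry--Émery / martingale argument, it suffices to show that $\|A_t\|_{op}$, the covariance of $\mu_t$, stays bounded by a universal constant with probability at least $1/2$ up to a time $t_0\ge c$. This yields $C_P(\mu)\le C/t_0$. Controlling $\|A_t\|_{op}$ is in turn reduced, via Itô's formula for $\tr(A_t^{q})$ with $q\sim\log n$, to bounding the third-order moment tensor $\EE_{\mu_t}(X-b)^{\otimes 3}$ contracted against $A_t$. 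This is the step where the previous methods lost the $\sqrt{\log n}$ or $\log n$ factor.

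The second and decisive step is to replace this third-moment bound by the Song--Zhang criterion on high-order derivatives of tilt averages. The plan here is as follows.

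\begin{enumerate}
\item Show that for any isotropic log-concave $\mu$ and $|\theta|\le c$, the derivatives $\nabla^k\Lambda(\theta)$ satisfy operator-type bounds of order $C^k k!$. This holds uniformly in dimension once restricted to suitable directions. I would derive these bounds from Borell's lemma (log-concave moments) and the analyticity of $\Lambda$ on a dimension-free neighborhood.
\item Insert these bounds into the Song--Zhang criterion. It states, roughly, that if the tilt averages have controlled higher derivatives on a ball of radius $c$, then $\sup_{|\theta|\le c}\|A(\theta)\|_{op}\le C$.
\item Because the stochastic localization path $\theta_t$ stays, up to time $c$, in a region where the tilts are comparable to $\mu$, this yields the uniform bound on $\|A_t\|_{op}$ needed in the first step.
\end{enumerate}

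To close the argument, I would use a bootstrap in the dimension-free form. Assume $\sup_{n,\mu}C_P\le \kappa$, which is finite for each fixed $n$. Feed this into the third-moment estimate, in the form $\|\EE (X\otimes X\otimes X)\|\lesssim\sqrt{\kappa}$-type bounds. Then use the Song--Zhang step to obtain $\kappa\le C+\tfrac12\kappa$, which gives $\kappa\le 2C$.

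I expect the main obstacle to be verifying the hypotheses of the Song--Zhang criterion with constants independent of $n$. Specifically, one must show that high-order derivatives of $\theta\mapsto b(\theta)$ grow only factorially, with no logarithmic dimensional loss. My first attempt would be to express $\nabla^k b$ as cumulant tensors of $\mu_\theta$. I would then bound each cumulant tensor by its action on unit vectors, using the reverse Hölder inequalities for log-concave measures together with the inductive bound on $\kappa$.
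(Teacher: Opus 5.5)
Your proposal has a genuine gap at its core: it misidentifies both the hypothesis and the conclusion of the Song--Zhang criterion.

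\emph{What the criterion actually says.} In the form used in the paper (Theorem~\ref{thm_1032}), the hypothesis concerns \emph{every} $f\in L^2(\mu)$ and every $d\geq1$. It requires the Taylor coefficients at the origin of the tilt-average $F_f(z)=\int f\,d\mu_z$ to satisfy the Hilbert--Schmidt bound $|\cT_d f|\leq R^d\|f\|_2$. The conclusion is directly $C_P(\mu)\leq CR^2$. It is obtained by iterating $h_{i+1}=\alpha_{i+1}(-L)^{-1}\nabla_0 h_i$ from the first eigenfunction and using Bochner's identity. No control of covariances along a localization path is involved. The conclusion you attribute to the criterion, $\sup_{|\theta|\leq c}\|A(\theta)\|_{op}\leq C$, is elementary for small tilts of an isotropic log-concave measure, so it cannot carry the argument.

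\emph{The missing passage to general $f$.} Your plan only ever controls derivatives of $\Lambda$, i.e.\ the case of linear $f$, where $\cT_d\ell=\kappa_{d+1}^\mu(v)/d!$. Passing to arbitrary $f$ needs a separate idea. In the paper this is the suspension of Proposition~\ref{prop_suspension}. For suitably normalized $f$ with bounded Hessian, let $S$ be proportional to $N^{-1/2}\sum_i f(X_i)$ plus independent Laplace noise. Then $(S,X_1,\ldots,X_N)$ is isotropic and log-concave in dimension $Nn+1$ once $N$ is large, and $\cT_d f$ appears as a block of $\kappa_{d+1}(e_0,\cdot,\ldots,\cdot)$. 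Nothing in your outline plays this role.

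\emph{Linear $f$ already fails at your first sub-step.} Borell's lemma and reverse H\"older inequalities only control one-dimensional quantities such as $\kappa_k(v,w,\ldots,w)$ for unit $v,w$. The criterion instead needs the Hilbert--Schmidt bound $|\kappa_m(u,\cdot,\ldots,\cdot)|^2\leq K^{m-1}((m-1)!)^2|u|^2$ of Theorem~\ref{thm_cumulants}. Passing naively from the former to the latter loses powers of $n$.

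Already $m=3$ is the generalized thin-shell estimate, which comes from the anisotropic Chen--Klartag moment-measure argument. Propagating it to all orders with factorial constants is the paper's induction via Eldan's localization with quadratic variation $A_t^{-1}$. There the drift of $\cE_m(t)=|\kappa_m(t)(u)|_t^2$ produces $\tfrac12\cE_{m+1}(t)$. The lower-order product terms $L_m(t)$ are absorbed thanks to $\binom{m-1}{a-1}\sqrt{b_ab_{m-a+1}}=\sqrt{b_m}$.

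\emph{The localization and bootstrap scaffolding.} It does not hold together, for three reasons:
\begin{itemize}
\item The path $\theta_t$ contains $W_t$ with $|W_t|\approx\sqrt{nt}$. It therefore leaves $\{|\theta|\leq c\}$ by time of order $1/n$, which yields only $C_P\lesssim n$.
\item The Gaussian-factor localized measures are not tilts of $\mu$ in any case.
\item The bootstrap $\kappa\leq C+\tfrac12\kappa$ is asserted with no mechanism producing it. Moreover, a supremum over all $n$ is not known a priori to be finite.
\end{itemize}
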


\medskip
Equivalently, for any log-concave probability measure
$\mu$ on $\RR^n$,
\begin{equation}
 \|\cov(\mu)\|_{op}
 \leq C_P(\mu)
 \leq C\|\cov(\mu)\|_{op},
 \label{eq_KLS_covariance}
\end{equation}
where $\cov(\mu) \in \RR^{n \times n}$ is the covariance matrix of $\mu$ and $\| \cdot \|_{op}$ is its operator norm.
The left-hand inequality in (\ref{eq_KLS_covariance}) follows by testing \eqref{eq_poincare}
on linear functions. The right-hand inequality follows from
Theorem~\ref{thm_main} by an affine change of variables.
It follows that up to a universal factor, the Poincar\'e constant
of a log-concave measure is determined by testing linear functions.

\medskip
The study of Poincar\'e inequalities on convex domains goes back to
Poincar\'e \cite{Poincare} in the 1890s.
The sharp diameter bound in the Poincar\'e inequality
from Payne and Weinberger \cite{PW} is
$$
 C_P(\mu_K)\leq \frac{\diam(K)^2}{\pi^2},
$$
where $\mu_K$ is the uniform probability measure on a convex body $K$.
The KLS conjecture, formulated in 1995, replaces the diameter by a
quantity that describes the distribution of mass rather than the
location of the most extreme points: the largest eigenvalue of the
covariance matrix. This distinction is already significant for an
isotropic Euclidean ball or cube, whose squared diameter is of order
$n$, and for an isotropic regular simplex, whose squared diameter is
of order $n^2$. The conjecture arose in the study of localization and
random walks in convex bodies, with applications to random sampling
and volume computation \cite{KLS}.

\medskip
Two closely related problems concern hyperplane sections and the thin shell phenomenon. Bourgain's slicing problem, originating in his work on
maximal functions \cite{Bourgain} in the 1980s, asks whether every
convex body of volume one in $\RR^n$ has a hyperplane section of
$(n-1)$-dimensional volume at least a positive universal constant.
The thin-shell problem emerged from the study of the central limit
phenomenon for convex bodies by Anttila, Ball and Perissinaki
\cite{ABP}, and was formulated as a variance conjecture by Bobkov
and Koldobsky \cite{BK}. It asks whether every isotropic log-concave
random vector $X$ in $\RR^n$ satisfies
\begin{equation}
 \var(|X|^2)\leq Cn.
 \label{eq_thin_shell_history}
\end{equation}
Such an estimate implies that
$\EE(|X|-\sqrt{n})^2\leq C$, and hence that most of the mass lies in
a spherical shell of bounded width around the sphere of radius
$\sqrt{n}$.
The KLS conjecture implies \eqref{eq_thin_shell_history} simply by
applying the Poincar\'e inequality to $x\mapsto |x|^2$:
$$
 \var(|X|^2)\leq 4C_P(\mu)\EE|X|^2=4nC_P(\mu),
$$
where $\mu$ is the law of $X$. Eldan and Klartag \cite{EK} proved
that the thin-shell conjecture implies the slicing conjecture,
while Ball and Nguyen \cite{BN} provided a more direct implication from KLS to slicing.
See also Bizeul \cite{Biz}.
Thus the three problems formed a hierarchy, with KLS at the strongest
end. A partial converse was obtained by Eldan \cite{eldan}, who
introduced stochastic localization and showed that a thin-shell bound
yields a KLS bound with a loss of polylogarithmic factors in the
dimension.

\medskip
Stochastic localization became a principal tool for
improving the dimension dependence in these inequalities. Lee and
Vempala \cite{LV} obtained $C_P(\mu)\leq C\sqrt{n}$ for isotropic
log-concave measures, and a breakthrough result by Chen \cite{Chen} established a sub-polynomial
bound. Klartag and Lehec \cite{KL} proved a
poly-logarithmic bound. This was followed by
\begin{equation}
 C_P(\mu)\leq C\log n,
 \label{eq_logarithmic_KLS_history}
\end{equation}
proved by Klartag \cite{KlartagLog} using an improved log-concave
Lichnerowicz inequality together with stochastic localization.
These developments also yielded  progressively stronger estimates for
the slicing and thin-shell problems.

\medskip
The slicing problem was resolved by Klartag and Lehec
\cite{KLslicing}, building on a bound of Guan \cite{Guan}.
Their proof combines stochastic localization, Milman's theory of
$M$-ellipsoids and stability estimates for the Shannon-Stam
inequality. Bizeul \cite{BizeulSlicing} gave an alternative proof
through small-ball estimates, again using stochastic localization, $M$-ellipsoids
and Guan's bound. A simpler proof without stochastic localization was
recently found by Brazitikos \cite{bra}. Klartag and Lehec \cite{KLthin} subsequently
resolved the thin-shell conjecture by constructing a parallel
coupling between exponential tilts of a log-concave probability measure.

\medskip
A different proof of the thin-shell inequality, and consequently of the slicing conjecture, was obtained in
Chen and Klartag \cite{ChenKlartag}. It yields the sharp estimate
\begin{equation}
 \var(|X|^2)\leq 8n
 \label{eq_sharp_thin_shell_history}
\end{equation}
for every isotropic, log-concave random vector $X$ in $\RR^n$.
Equality is attained when the coordinates of $X$ are independent
standard centered exponential random variables. Their argument uses
log-concave moment measures from Klartag \cite{lc_moment} and the associated weighted Riemannian
manifold, with its metric given by the Hessian of a convex potential
satisfying a Monge-Amp\`ere equation.
An anisotropic modification of the Chen-Klartag argument
from Bizeul \cite[Section~2]{BizeulMM} and Letwin \cite{letwin}
gives
$$
 \var(|Y|^2)\leq 8\tr(\Sigma^2)
$$
for a centered log-concave random vector $Y$ with covariance matrix
$\Sigma$. It yields a dimension-free bound for the third-moment
parameter appearing in stochastic localization, see (\ref{eq_intro_third}) below. Combining this
bound with the improved log-concave Lichnerowicz inequality from
\cite{KlartagLog} gives
\begin{equation}
 C_P(\mu)\leq C\sqrt{\log n}
 \label{eq_sqrt_log_KLS_history}
\end{equation}
for isotropic log-concave measures. Mikulincer and Zadik \cite{MZ} have recently proved
the KLS conjecture in the  case of unconditional convex bodies.
A very recent work by Song and Zhang  \cite{SZ},
which we discuss in great detail below, introduces several new ideas and improves
the bound on the right-hand side of (\ref{eq_sqrt_log_KLS_history}) to $C \cdot 16^{\log^* n}$.
Theorem~\ref{thm_main} removes the remaining  dependence
on the dimension.

\medskip Our starting point for the proof of Theorem~\ref{thm_main}
is the aforementioned bound on the third centered moment:
for every isotropic log-concave random vector $X$ and every $u\in\RR^n$,
\begin{equation}
 \left|\EE\big[\langle X,u\rangle X\otimes X\big]\right|^2
 \leq 8|u|^2,
 \label{eq_intro_third}
\end{equation}
where the norm on the left is the Hilbert--Schmidt norm; see
Lemma~\ref{lem_third_cumulant} below. We inductively propagate
this estimate to cumulants of all orders. If
$\Lambda_\mu(z)=\log\int e^{\langle z,x\rangle}\,d\mu(x)$ and
$\kappa_m^\mu=\nabla^m\Lambda_\mu(0)$, the resulting bound is
$$
 |\kappa_m^\mu(u,\cdot,\ldots,\cdot)|^2
 \leq K^{m-1}((m-1)!)^2|u|^2,
 \qquad m\geq2,
$$
with a universal constant $K\geq1$.
The induction step  uses Eldan's stochastic localization \cite{eldan}, with
inverse covariance as the quadratic-variation matrix.

\medskip
To use these estimates, for $f \in L^2(\mu)$ we consider the tilt-average
$$
 F_f(z)=\frac{\int f(x)e^{\langle z,x\rangle}\,d\mu(x)}
                 {\int e^{\langle z,x\rangle}\,d\mu(x)}.
$$
A suspension construction, applied to independent copies of $\mu$,
realizes the derivatives of $F_f$ as mixed cumulants of an isotropic
log-concave measure in a larger dimension. This yields the bound
$$
 \left|\frac{1}{d!}\nabla^dF_f(0)\right|
 \leq R^d\|f\|_{L^2(\mu)}\qquad(d\geq1),
$$
with $R$ bounded from above by a universal constant. A criterion of Song and Zhang
\cite[Theorem~5.1]{SZ} then gives the required Poincar\'e bound.

\medskip The remainder of this paper is organized as follows.
Section~\ref{sec_background} contains the analytic preliminaries, and
Section~\ref{sec_taylor} presents the proof of the Song-Zhang criterion.
The cumulant estimates are proved in Sections~\ref{sec_cumulants}--\ref{sec_induction}.
Section~\ref{sec_suspension} contains the suspension argument and
completes the proof of Theorem~\ref{thm_main}.

\medskip {\it Acknowledgements.} Most proofs and mathematical ideas in this paper were found by ChatGPT; 
a notable exception is the idea to use suspension which was suggested by the authors.
The role of the authors has been mostly to understand these proofs and improve their exposition.
The first two authors were supported by a grant from the Israel Science Foundation (ISF~$\# 674/24$).

\section{The Laplace operator of a log-concave measure}
\label{sec_background}

This section provides standard background on the Laplace operator associated with a log-concave probability measure
in $\RR^n$.
We say that a log-concave probability measure $\mu$ on $\RR^n$ is
{\it regular} if its density is $e^{-V}$, where $V:\RR^n\to\RR$ is smooth,
any partial derivative of $V$ of any order grows at most polynomially at infinity, and there exist $a, b> 0$
such that
\begin{equation}
 a\,\id\leq\nabla^2V(x) \leq b\,\id \qquad \qquad \qquad \text{for all} \ x \in \RR^n
 \label{eq_2256}
\end{equation}
in the sense of symmetric matrices, where $\nabla^2 V(x) \in \RR^{n \times n}$ is the Hessian matrix of $V$ at the point $x$. The parameters $a, b > 0$ may depend on $\mu$, and in particular they need not be universal constants.

\medskip In proving Theorem \ref{thm_main} we may assume that $\mu$ is regular and log-concave,
since any full-dimensional log-concave probability measure may be approximated
by a regular one. Indeed, first convolve with a Gaussian of covariance
$\eps\,\id$, and then multiply the resulting density by
$e^{-\delta|x|^2/2}$ and normalize. The new potential has Hessian between
$\delta\,\id$ and $(\delta+\eps^{-1})\,\id$, and its derivatives have polynomial
growth by Klartag and Putterman \cite[Lemma~2.2(i)]{KP}; see also
\cite[Section~3]{KP}. Letting $\eps,\delta\to0$ gives weak convergence and
convergence of moments. In the isotropic case we may also recenter and
normalize the covariance. Thus uniform Poincar\'e bounds for regular
measures pass to the limit, by testing on smooth, compactly supported
functions. See e.g. \cite{KP} and references therein for more details.

\medskip
Let $\cA$ be the algebra of smooth functions $u:\RR^n\to\RR$ such that,
for every multi-index $\alpha$, there exist $C_\alpha,N_\alpha>0$ with
\begin{equation}
 |\partial^\alpha u(x)|\leq C_\alpha(1+|x|)^{N_\alpha}
 \qquad (x\in\RR^n).
 \label{eq_algebra}
\end{equation}
This algebra contains the polynomials as well as the function $V$, and it is closed under
partial differentiation. Its elements and all their derivatives belong
to $L^p(\mu)$ for every $1\leq p<\infty$. They also have subexponential
decay relative to $e^{-V}$ in the sense of \cite[Section~2]{KP}.
We use this smaller class here because it is closed under products.
The Laplace operator associated with $\mu$ is
\begin{equation}
 Lu=\Delta u-\langle\nabla V,\nabla u\rangle
    =e^V\div(e^{-V}\nabla u).
 \label{eq_L}
\end{equation}
It maps $\cA$ into itself. For $u,v\in\cA$ we have
\begin{align}
 \int_{\RR^n} (Lu)v\,d\mu
 &=-\int_{\RR^n} \langle\nabla u,\nabla v\rangle\,d\mu
 =\int_{\RR^n} u(Lv)\,d\mu.
 \label{eq_ibp}
\end{align}
In particular, $\int Lu\,d\mu=0$. To justify these identities, insert a
smooth cutoff $\chi_R(x)=\chi(x/R)$, equal to one on the ball of radius
$R$ and zero outside the ball of radius $2R$, and integrate by parts.
This way we obtain an error in \eqref{eq_ibp} which is
$-\int v\langle\nabla\chi_R,\nabla u\rangle\,d\mu$.
The error tends to zero as $R \to \infty$ by our polynomial growth assumptions and the Gaussian decay.  The {\it Dirichlet energy} of a function $f\in\cA$ is the quantity
$$
 \|\nabla f\|_{L^2(\mu)}^2
 =\int_{\RR^n}|\nabla f|^2\,d\mu
 =-\int_{\RR^n}fLf\,d\mu.
$$
Differentiating \eqref{eq_L} gives
$\partial_iLu=L\partial_i u-\sum_j(\partial_{ij}V)\partial_j u$.
It is well-known that applying \eqref{eq_ibp} twice yields the {integrated Bochner
identity},
\begin{equation}
 \int(Lu)^2\,d\mu
 =\int\|\nabla^2u\|_{HS}^2\,d\mu
  +\int\langle\nabla^2V\nabla u,\nabla u\rangle\,d\mu
 \qquad (u\in\cA).
 \label{eq_bochner}
\end{equation}
Initially defined on $C_c^\infty(\RR^n)$, the operator $L$ is essentially
self-adjoint in $L^2(\mu)$. We use the same notation $L$ for its self-adjoint
closure. The operator $-L$ has discrete spectrum
$0=\lambda_0<\lambda_1\leq\lambda_2\leq\cdots\to\infty$, with
$\lambda_1=1/C_P(\mu)$; see \cite[Section~1 and Appendix~A]{KP}.
Indeed, the isometry $u\mapsto e^{-V/2}u$ transforms $-L$ to
the Schr\"odinger operator \begin{equation}
 -\Delta+W,\qquad
 W=\frac{|\nabla V|^2}{4}-\frac{\Delta V}{2},
 \qquad W(x)\longrightarrow\infty\quad\text{as }|x|\to\infty.
 \label{eq_schrodinger}
\end{equation}
The last assertion follows from \eqref{eq_2256}.

\medskip
Every eigenfunction of $L$ belongs to $\cA$. Here is a sketch of a justification of
this fact which also follows from Wang's Harnack inequality, see e.g. \cite[Theorem 5.6.1]{BGL}.  Let $Lf=-\lambda f$. Elliptic regularity makes
$f$ smooth, and it is explained in
\cite[Remark~2.7]{KP} that  $\psi=e^{-V/2}f\to0$ at infinity.
Choose an integer $m$ with $2ma>\lambda$ and put
$w(x)=(1+|x|^2)^m$. Differentiating, we see that
\begin{equation}
 \langle x,\nabla V(x)\rangle
 \geq a|x|^2- O( |x|),
 \qquad
 \frac{Lw}{w}\leq-2ma+o(1)\qquad \qquad \qquad \text{as }|x|\to\infty.
 \label{eq_1338} \end{equation}
Thus, outside a sufficiently large ball, the function $W$ from (\ref{eq_schrodinger})
satisfies $W>\lambda$ and
$(-\Delta+W-\lambda)(e^{-V/2}w)\geq0$. Choose $C > 0$ so that
$|\psi|\leq Ce^{-V/2}w$ on the boundary of the ball. The maximum principle, applied separately to
$\psi-Ce^{-V/2}w$ and $-\psi-Ce^{-V/2}w$, gives $|f|\leq Cw$ outside
the ball. The application of the maximum principle is legitimate since both $\psi$ and $e^{-V/2}w$ tend to zero at infinity.
Thus $f$ grows at most polynomially at infinity.

\medskip Interior gradient estimates on the balls
$B(x,(1+|x|)^{-1})$, rescaled to the unit ball, show that
if $u$ and $(L+\lambda)u$ have at most polynomial growth,
then so does $\nabla u$.
Differentiating $Lf=-\lambda f$ and using the polynomial growth
of all derivatives of $V$, we conclude by induction that every
partial derivative of $f$ has at most polynomial growth.
Thus $f\in\cA$.

\medskip A first, non-trivial, normalized eigenfunction of $L$ is therefore
a function  $f\in\cA$ with
\begin{equation}
 Lf=-\lambda_1f,\qquad \int_{\RR^n} f\,d\mu=0,\qquad \int_{\RR^n} f^2\,d\mu=1.
\label{eq_1513} \end{equation}
From \eqref{eq_ibp},
$$
 \int_{\RR^n} |\nabla f|^2\,d\mu=\lambda_1.
$$
Write $\|u\|_2$ for the norm in $L^2(\mu)$, let
$\lambda=\lambda_1=1/C_P(\mu)>0$, and set
$$
 \cA_0=\left\{g\in\cA \, ; \, \int_{\RR^n}g\,d\mu=0\right\}.
$$
The inverse of $-L$ is a well-defined operator on $\cA_0$, as explained in the following:

\begin{lemma}
\label{lem_poisson}
For every $g\in\cA_0$ there exists a unique $u\in\cA_0$ such that
$-Lu=g$. Writing $u=(-L)^{-1}g$, we have
\begin{equation}
 \|u\|_2\leq\lambda^{-1}\|g\|_2,\qquad
 \int_{\RR^n}|\nabla u|^2\,d\mu
 =\int_{\RR^n}gu\,d\mu
 \leq\lambda^{-1}\|g\|_2^2.
 \label{eq_poisson_energy}
\end{equation}
\end{lemma}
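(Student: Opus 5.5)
Since $-L$ has discrete spectrum with $\lambda_0 = 0$ simple (constants) and spectral gap $\lambda = \lambda_1$, the plan is to build $u$ by spectral calculus in $L^2(\mu)$ and then show that it lies in $\cA$.

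\textbf{Construction.} Let $(\varphi_k)_{k \ge 0}$ be an orthonormal eigenbasis with $-L\varphi_k = \lambda_k \varphi_k$ and $\varphi_0 \equiv 1$. Each $\varphi_k$ belongs to $\cA$, as explained above. For $g \in \cA_0$ we have $\langle g, \varphi_0 \rangle = 0$, so we set
$$u = \sum_{k\geq1}\lambda_k^{-1}\langle g,\varphi_k\rangle\varphi_k .$$
This $u$ lies in the domain of the self-adjoint closure $L$, has mean zero, and satisfies $-Lu = g$ in the operator sense. Parseval gives $\|u\|_2 \le \lambda^{-1}\|g\|_2$. Moreover
$$\langle -Lu, u \rangle = \sum_k \lambda_k^{-1}\langle g,\varphi_k\rangle^2 \le \lambda^{-1}\|g\|_2^2 .$$
Once $u \in \cA$ is known, \eqref{eq_ibp} identifies $\langle -Lu, u\rangle$ with $\int|\nabla u|^2\,d\mu = \int gu\,d\mu$, which yields \eqref{eq_poisson_energy}.

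\textbf{Uniqueness.} If $u_1, u_2 \in \cA_0$ both solve the equation, then $w = u_1 - u_2$ satisfies $Lw = 0$. By \eqref{eq_ibp}, $\int|\nabla w|^2\,d\mu = -\int wLw\,d\mu = 0$, so $w$ is constant, and $\int w\,d\mu = 0$ forces $w = 0$.

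\textbf{Regularity, the main obstacle.} It remains to show $u \in \cA$, i.e.\ smoothness with polynomial growth of all derivatives.
\begin{itemize}
\item Smoothness follows from elliptic regularity, since $-Lu = g$ holds in the distributional sense with $g$ and $V$ smooth.
\item Polynomial growth of $u$ itself is the delicate step. I would first show that $u$ lies in every $L^p(\mu)$, or at least that $\psi = e^{-V/2}u \to 0$ at infinity, in the spirit of \cite[Remark~2.7]{KP}. Then I would repeat the barrier argument used above for eigenfunctions, with $\lambda = 0$. Take $w = (1+|x|^2)^m$ with $m$ large enough that $w$ dominates the polynomial growth of $g$ (say $|g| \le C(1+|x|)^N$ with $2m > N$). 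By \eqref{eq_1338}, $Lw \le -c\,w$ outside a large ball, which is $\le -|g|$ after adjusting constants. Hence $L(u - Cw) \ge 0$ and $L(-u - Cw) \ge 0$ there. The maximum principle, applied after conjugation by $e^{-V/2}$ to the Schr\"odinger form \eqref{eq_schrodinger} where $W > 0$ outside a ball, together with decay at infinity, gives $|u| \le Cw$ outside the ball.
\item To get the decay $\psi \to 0$, an alternative is to approximate: the truncations $\sum_{k\le N}$ lie in $\cA$ and converge in the graph norm. The cleaner route is to invoke local elliptic estimates of $\psi$ in terms of its $L^2$ norm on unit balls, which tends to zero.
\end{itemize}

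\textbf{Higher derivatives.} Finally, the rescaled interior gradient estimates on balls $B(x,(1+|x|)^{-1})$ quoted above apply because $u$ and $Lu = -g$ have polynomial growth; they give polynomial growth of $\nabla u$. Differentiating the equation, $L\partial_i u = \partial_i L u + \sum_j (\partial_{ij}V)\,\partial_j u$, and by induction all derivatives have polynomial growth, so $u \in \cA_0$.
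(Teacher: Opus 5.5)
Your argument is correct in outline. The construction, the bound $\|u\|_2\le\lambda^{-1}\|g\|_2$, uniqueness, smoothness and the bootstrap for higher derivatives all coincide with the paper's. The real difference is how you get polynomial growth of $u$.

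\textbf{The paper's route.} It never needs pointwise decay of $e^{-V/2}u$. It tests the weak form of $-Lu=g$ against $\chi_T^2h$ with $h=(u-Cw)_+$, and uses $g+CLw\le0$ on the support of $h$. This gives $\int\chi_T^2|\nabla h|^2\,d\mu\le16T^{-2}\|u\|_2^2$, so $h\equiv0$ after $T\to\infty$. This energy version of the maximum principle runs on the information $u\in L^2(\mu)$ alone, which the spectral theorem already supplies.

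\textbf{Your route.} You reuse the classical maximum principle for $-\Delta+W$ from the eigenfunction paragraph. That costs an extra lemma, namely $\psi=e^{-V/2}u\to0$ at infinity. The lemma holds, but ``local elliptic estimates in terms of the $L^2$ norm on unit balls'' needs one more word. The potential $W$ grows like $|x|^2$, and generic local bounds for $-\Delta\psi+W\psi=e^{-V/2}g$ carry constants depending on $\sup_{B(x,1)}W$. Such constants would swamp the rate-free decay of $\|\psi\|_{L^2(B(x,1))}$, and then the maximum principle has no boundary condition at infinity.

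What saves you is the sign of $W$. Where $W\ge0$, Kato's inequality gives $-\Delta|\psi|\le e^{-V/2}|g|$ in the distributional sense. The mean value inequality for subharmonic functions then gives $|\psi(x)|\le C_n\|\psi\|_{L^2(B(x,1))}+C_n\sup_{B(x,1)}e^{-V/2}|g|$, uniformly in $x$, and both terms tend to zero. Graph-norm convergence of the spectral truncations does not by itself yield pointwise decay, so the local-estimate route is the one to use. The $L^p(\mu)$ claim is not needed.

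\textbf{Comparison.} Your approach is viable and runs parallel to the paper's treatment of eigenfunctions. The paper's argument is more self-contained: it needs no decay input for $e^{-V/2}u$ and no local sup-estimate for the Schr\"odinger equation.
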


\begin{proof}
The spectral theorem gives a unique mean-zero solution
$u\in\operatorname{Dom}(L)$ of $-Lu=g$, with
$\|u\|_2\leq\lambda^{-1}\|g\|_2$.
Local elliptic regularity gives $u \in C^{\infty}(\RR^n)$
with $-Lu = g$ pointwise in $\RR^n$. Integrating by parts, we see that for any compactly-supported, Lipschitz function $h: \RR^n \rightarrow \RR$,
\begin{equation}
 \int_{\RR^n}\langle\nabla u,\nabla h\rangle\,d\mu
 =\int_{\RR^n}gh\,d\mu.
 \label{eq_poisson_weak}
\end{equation}
Let us prove that $u\in\cA$.  For a sufficiently large integer $m$, put
$w(x)=(1+|x|^2)^m$ and let us use~(\ref{eq_1338}).
Since $g$ has polynomial growth, we may choose $m$ and then $R$
so that $-Lw\geq |g|$ outside the ball of radius $R$.
Choose $C\geq1$ such that $|u|\leq Cw$ on this ball.
The function $h=(u-Cw)_+$ is locally Lipschitz, it vanishes on
this ball, and it satisfies $0\leq h\leq |u|$.
Let $\chi_T\in C_c^\infty(\RR^n)$ satisfy $0\leq\chi_T\leq1$,
$\chi_T=1$ on $B(0,T)$, $\chi_T=0$ outside $B(0,2T)$,
and $|\nabla\chi_T|\leq2/T$.
Using $\chi_T^2h$ as a test function in
\eqref{eq_poisson_weak}, and integrating by parts, gives
$$
 \int_{\RR^n}\chi_T^2|\nabla h|^2\,d\mu
 +2\int_{\RR^n}\chi_T h
       \langle\nabla h,\nabla\chi_T\rangle\,d\mu
 =\int_{\RR^n}(g+CLw)\chi_T^2h\,d\mu\leq0.
$$
By the Cauchy--Schwarz inequality,
$$
 \int_{\RR^n}\chi_T^2|\nabla h|^2\,d\mu
 \leq4\int_{\RR^n}h^2|\nabla\chi_T|^2\,d\mu
 \leq\frac{16}{T^2}\|u\|_2^2.
$$
Letting $T\to\infty$ gives $\nabla h=0$ almost everywhere.
Since $h$ vanishes on the ball, $h \equiv 0$.
Applying the same argument to $-u$ shows that
$|u|\leq Cw$ throughout $\RR^n$.

\medskip
Interior gradient estimates on balls of radius
$(1+|x|)^{-1}$ now give polynomial growth of~$\nabla u$.
Differentiating $-Lu=g$ and repeating this argument gives
polynomial growth of every partial derivative of any order of $u$.
Consequently $u\in\cA_0$. The integration by parts \eqref{eq_ibp} gives
$$
 \int_{\RR^n}|\nabla u|^2\,d\mu
 =-\int_{\RR^n}uLu\,d\mu
 =\int_{\RR^n}gu\,d\mu
 \leq\|g\|_2\|u\|_2
 \leq\lambda^{-1}\|g\|_2^2.
$$
This proves \eqref{eq_poisson_energy}.
Uniqueness in $\cA_0$ follows from \eqref{eq_ibp}:
if $v\in\cA_0$ satisfies $Lv=0$, then
$\int|\nabla v|^2\,d\mu=0$, so $v$ is constant and hence zero.
\end{proof}

Let
$$ A \in \RR^{\overbrace{n\times\cdots\times n}^{k\text{ times}}} \cong (\RR^n)^{\otimes k}, $$
i.e., $A$ is a tensor of real numbers with $k$ indices. We represent
$A$ as $A=(A_{i_1,\ldots,i_k})_{i_1,\ldots,i_k =1 ,\ldots,n}$, and write
$$ |A| = \sqrt{ \sum_{i_1,\ldots,i_k=1}^n  |A_{i_1,\ldots,i_k}|^2 } $$
for its Hilbert-Schmidt norm. The associated scalar product of $A, B \in (\RR^n)^{\otimes k}$
is denoted by $\langle A, B \rangle$. For a tensor-valued function $u=(u_{i_1,\ldots,i_k})$ on $\RR^n$,
the expressions $Lu$, $L^{-1}u$ and $\int u\,d\mu$
are understood componentwise.
We write that $u \in \cA$ or $u \in \cA_0$ if each component does.
We use the norm
$$
 \|u\|_2^2 = \|u\|_{L^2(\mu)}^2
 =\sum_{i_1,\ldots,i_k=1}^n
   \int_{\RR^n}|u_{i_1,\ldots,i_k}(x)|^2\,d\mu(x).
$$
Note that our tensors are not necessarily symmetric.
In our notation, differentiating a tensor-valued function adds a new first index. Thus
$$
 \nabla u(x)
 =\big(\partial_j u_{i_1,\ldots,i_k}(x)\big)_{j,i_1,\ldots,i_k=1,\ldots,n}
 \in \RR^{\overbrace{n\times\cdots\times n}^{k+1\text{ times}}} \cong (\RR^n)^{\otimes (k+1)}.
$$
The {\it centered gradient} is denoted by
\begin{equation}
 \nabla_0 u=\nabla u-\int_{\RR^n}\nabla u\,d\mu.
\label{eq_1045} \end{equation}
Since
\begin{equation}
 \|\nabla_0 u\|_2^2
 =\|\nabla u\|_2^2
  -\left|\int_{\RR^n}\nabla u\,d\mu\right|^2,
 \label{eq_centered_gradient}
\end{equation}
we conclude from  \eqref{eq_poisson_energy} that for any $g \in \cA_0$,
\begin{equation}
 \|\nabla_0(-L)^{-1}g\|_2
 \leq\lambda^{-1/2}\|g\|_2.
 \label{eq_inverse_gradient}
\end{equation}

\medskip For a tensor $T$ with at least $q$ indices, denote by $\cS_q T$ its
average over all permutations of the first $q$ indices. We will need  the following lemma:

\begin{lemma}
\label{lem_tensor_recovery}
Let $d \geq 1$. Let $T$ be a tensor with at least $3d$ indices. Assume that
$T$ is invariant under permutations of its first $d$ indices, and also under
permutations of its next $2d$ indices. Then,
$$
 |T|
 \leq4^d|\cS_{2d}T|.
$$
\end{lemma}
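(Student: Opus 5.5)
The plan is to treat the indices beyond the first $3d$ as passive: fix them, or tensor with an auxiliary Hilbert space $W$. It then suffices to prove the inequality for $T\in(\RR^n)^{\otimes d}\otimes(\RR^n)^{\otimes 2d}\otimes W$ that is symmetric in block $A$ (positions $1,\ldots,d$) and in block $B$ (positions $d+1,\ldots,3d$). Since $\cS_{2d}$ is an orthogonal projection, $|\cS_{2d}T|^2=\langle \cS_{2d}T,T\rangle$. Any lower bound $\langle\cS_{2d}T,T\rangle\geq c_d|T|^2$ therefore gives $|T|\leq c_d^{-1/2}|\cS_{2d}T|$. So I aim for $c_d\geq 16^{-d}$, or at least for a bound of comparable strength proved by a more direct route.

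\textbf{Step 1 (reduction to shuffles).} Write $B=B_1\cup B_2$, where $B_1$ consists of positions $d+1,\ldots,2d$, which are the ones mixed by $\cS_{2d}$. Since $T$ is invariant under permutations of $A$ and of $B_1$, the average defining $\cS_{2d}T$ collapses to an average over how many indices are exchanged between $A$ and $B_1$:
\begin{equation*}
 \cS_{2d}T=\sum_{k=0}^d p_k\,T_k,\qquad p_k=\binom{d}{k}^2\Big/\binom{2d}{d}.
\end{equation*}
Here $T_k$ is the symmetrization over $A$ and over $B_1$ of the tensor obtained by swapping $k$ positions of $A$ with $k$ positions of $B_1$. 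The term $T_0=T$ carries weight $p_0=\binom{2d}{d}^{-1}\geq 4^{-d}$. If every cross term satisfied $\langle T_k,T\rangle\geq 0$, we would immediately get $|\cS_{2d}T|^2\geq 4^{-d}|T|^2$, which is better than claimed. I would test this positivity first. Write $M_k(\alpha,\beta)$ for the matrix obtained from $T$ by grouping $k$ indices of $A$ as the row $\alpha$ and $k$ indices of $B$ as the column $\beta$. The symmetry in $B$ then lets one express $\langle T_k,T\rangle$ as a contraction of $M_k$ against its transpose. I expect this need not be nonnegative in general, so this is a quick check rather than the main route.

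\textbf{Step 2 (polarization route, the main plan).} Identify $T$ with the $W$-valued bihomogeneous polynomial $p(x,y)=T(x^{\otimes d},y^{\otimes 2d})$, which determines $T$ uniquely by its symmetries. Identify $\cS_{2d}T$ with the polynomial $q(z,w)=T(z^{\otimes d},z^{\otimes d},w^{\otimes d})$ in the same way. One then has $p(x,y)=\frac{1}{d!}\,(\partial_t)^d|_{t=0}$ of suitable expressions, obtained by evaluating $q$ at $z=x+ty$ and $w=y$. Concretely, expanding $q(x+ty,y)$ in $t$, the coefficient of $t^0$ is $T(x^{\otimes d},x^{\otimes d},y^{\otimes d})$, which is not yet $p$. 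So I would instead exploit the full symmetry in $B$ through a second polarization in which $w$ is also varied, together with a finite-difference formula in $t$. This expresses $T$ as a signed combination of at most $2^{d}$ shifted copies of $\cS_{2d}T$, contracted against tensor powers of linear maps of norm at most $2$. Each such contraction has norm at most $|\cS_{2d}T|$ times a factor accounting for the combinatorial weights. Summing the weights should produce the constant $4^d$, since binomial sums of the form $\sum_j\binom{d}{j}$ and $\binom{2d}{d}\leq 4^d$ are exactly of this size.

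The main obstacle is Step 2. One needs a linear recovery formula for $T$ from $\cS_{2d}T$ that uses the symmetry of $T$ in block $B$ in an essential way, because without it $T\mapsto\cS_{2d}T$ is not injective. One also has to control the Hilbert--Schmidt norm, not merely the injective norm, of each term. To keep Hilbert--Schmidt norms under control, I would realize every operation as a partial contraction with a tensor power of an explicit matrix, and bound it by the operator norm of that matrix raised to the number of indices it acts on.
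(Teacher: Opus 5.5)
Your proposal does not yet contain a proof. Both routes stop exactly where the content of the lemma lies.

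\textbf{Step 1 cannot be completed.} For $d=1$, take $T=e_1\otimes(e_2\otimes e_3+e_3\otimes e_2)-e_2\otimes(e_1\otimes e_3+e_3\otimes e_1)$, which is symmetric in its last two indices. Then $|T|^2=4$ and $\langle T_1,T\rangle=\langle\tau T,T\rangle=-2$, where $\tau$ swaps the first two indices. Hence $|\cS_2T|^2=1$ and $|T|=2|\cS_2T|$. So cross terms can be negative. Moreover, the bound $|T|\leq\binom{2d}{d}^{1/2}|\cS_{2d}T|$ that positivity would give is false, and the optimal constant for $d=1$ is $2$.

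\textbf{Step 2 never produces the recovery formula.} The one substitution you examine, $z=x+ty$, $w=y$, gives nothing new. Its component of bidegree $(d,2d)$ in $(x,y)$ is $\binom{2d}{d}\,\cS_{2d}T(x^{\otimes d},y^{\otimes 2d})=\sum_k\binom{d}{k}^2\,T_k(x^{\otimes d},y^{\otimes 2d})$, which is exactly the Step 1 mixture. To isolate $T=T_0$ you would need substitutions in which $w$ also involves $x$, and then you would have to eliminate the $d$ shuffle terms with $k\geq1$. The size of the coefficients in that elimination is precisely the constant of the lemma, and you never compute it. Turning such a polynomial identity into a Hilbert--Schmidt bound also requires tracking the binomial normalizations that appear when bihomogeneous components are extracted. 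As written, ``at most $2^d$ terms'' and ``weights summing to $4^d$'' are guesses, not derivations.

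\textbf{The missing idea} is a positivity argument that averages over \emph{all} placements of the two blocks, not only over the shuffles inside positions $1,\ldots,2d$. For a $d$-subset $F\subseteq\{1,\ldots,3d\}$, let $T_F$ be $T$ with its first block moved to $F$ and its second block moved to the complement. All sums over $F$ below range over $d$-subsets.
\begin{itemize}
\item For a $2d$-subset $E$, the sum $\sum_{F\subseteq E}T_F$ is an index-permuted copy of $\binom{2d}{d}\cS_{2d}T$.
\item In the expansion of $\sum_{|E|=2d}|\sum_{F\subseteq E}T_F|^2$, the coefficient of $\langle T_F,T_{F'}\rangle$ is $\binom{d+r}{d}$, where $r=|F\cap F'|$.
\item By Vandermonde, $\binom{d+r}{d}=\sum_s\binom{d}{s}\binom{r}{s}$. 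So the expansion equals $\sum_s\binom{d}{s}\sum_{|S|=s}|\sum_{F\supseteq S}T_F|^2$, a sum of squares.
\item Its $s=d$ term alone is $\sum_F|T_F|^2=\binom{3d}{d}|T|^2$.
\end{itemize}
Hence $\binom{3d}{d}|T|^2\leq\binom{3d}{2d}\binom{2d}{d}^2|\cS_{2d}T|^2$, that is, $|T|\leq\binom{2d}{d}|\cS_{2d}T|\leq4^d|\cS_{2d}T|$.

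Equivalently, the coefficient matrix $\bigl(\binom{d+|F\cap F'|}{d}\bigr)_{F,F'}$ dominates the identity, while the Gram matrix $(\langle T_F,T_{F'}\rangle)_{F,F'}$ is positive semidefinite. This gives the lower bound on $\langle\cS_{2d}T,T\rangle$ that your Step 1 was missing, even though individual cross terms can be negative.
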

\begin{proof}
All sets below are subsets of $\Omega = \{1,\ldots,3d\}$.
For a subset $F$ of cardinality $d$, let $T_F$ be the tensor obtained from $T$ by permuting
its indices in the following manner: we move the first $d$
indices of $T$ to $F$, and the next $2d$ indices to $\Omega \setminus F$.
We leave indices beyond $3d$ untouched.
We claim that
\begin{equation}
 \sum_{|E|=2d}
 \left|\sum_{\substack{F\subseteq E\\|F|=d}}T_F\right|^2
 =
 \sum_{s=0}^d\binom{d}{s}
 \sum_{|S|=s}
 \left|\sum_{\substack{F\supseteq S\\|F|=d}}T_F\right|^2\geq\sum_{|F|=d}|T_F|^2. \label{eq_1412}
\end{equation}
Indeed, expanding the squared norms, the coefficient of
$\langle T_F,T_{F'}\rangle$ in both expressions is
$$
 \binom{3d - (2d-r)}{r} = \binom{d+r}{d}
 =\sum_{s=0}^d\binom{r}{s} \binom{d}{d-s},
 \qquad r=|F\cap F'|.
$$
This proves the equality in (\ref{eq_1412}).
The inequality follows by keeping  only the term $s=d$, and discarding the
other nonnegative terms. Each $T_F$ is an indices-permuted version of $T$ and hence $|T_F| = |T|$. Each inner sum in the expression
on the left-hand side of (\ref{eq_1412}) is an indices-permuted version of $\binom{2d}{d}\cS_{2d}T$.
Hence, from (\ref{eq_1412}),
$$
\binom{3d}{d}|T|^2\leq\binom{3d}{2d}\binom{2d}{d}^2|\cS_{2d}T|^2, $$
and the conclusion follows as $\binom{2d}{d} \leq 4^d$.
\end{proof}

\section{High-order derivatives of the tilt-average}\label{sec_taylor}

In this section we revisit the proof of Theorem~5.1 of Song and Zhang \cite{SZ}.
The idea of using exponential tilts in the context of the slicing problem
goes back to Klartag \cite{K_quarter}. Given a regular,
log-concave probability measure $\mu$ on $\RR^n$, we consider its family of
associated {\it exponential tilts} $(\mu_z)_{z\in\RR^n}$, defined via
\begin{equation}
 \frac{d\mu_z}{d\mu}(x)=e^{x\cdot z-\Lambda_\mu(z)}.
\label{eq_1314} \end{equation}
Here,
$$
 \Lambda_\mu(z)=\log\int_{\RR^n}e^{x\cdot z}\,d\mu(x)
$$
is a normalizing factor, ensuring that $\mu_z$ is a probability measure for any $z \in \RR^n$.
Given $f\in L^2(\mu)$, we consider its averages with respect to the
 tilted measures:
\begin{equation}
 F_f(z)=\int_{\RR^n}f(x)\,d\mu_z(x)
 \qquad (z\in\RR^n). \label{eq_1315}
\end{equation}
The function $F_f$ is smooth, and  its Taylor coefficients at the origin
are given, for $d\geq1$, by
\begin{equation}
 \mathcal{T}_d f
 =\frac{1}{d!}\nabla^d F_f(0)
 =\frac{1}{d!}
 \left(
 \frac{\partial^d F_f}{\partial z_{i_1}\cdots\partial z_{i_d}}(0)
 \right)_{i_1,\ldots,i_d=1,\ldots,n}
 \in\RR^{\overbrace{n\times\cdots\times n}^{d\text{ times}}}.
\label{eq_1337} \end{equation}
The following is a consequence of Theorem 5.1 from Song and Zhang \cite{SZ}:

\begin{theorem} Let $\mu$ be a centered, regular, log-concave probability measure in $\RR^n$ whose covariance matrix
is at most the identity. Assume that $R \geq 1$ is such that for any $f \in L^2(\mu)$ and $d \geq 1$,
\begin{equation}  | \cT_d f | \leq R^d \| f \|_2. \label{eq_1736} \end{equation}
Then $C_P(\mu) \leq C R^2$ where $C > 0$ is a universal constant.
\label{thm_1032}
\end{theorem}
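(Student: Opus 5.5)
The plan is to argue by duality with a first eigenfunction. By the approximation discussion in Section~\ref{sec_background} we may assume $\mu$ is regular, so there is a normalized first eigenfunction $f\in\cA$ as in \eqref{eq_1513}, with $\lambda=\lambda_1=1/C_P(\mu)$. It then suffices to find some $d\geq1$ for which
$$
|\cT_d f|\geq \big(c\lambda^{-1/2}\big)^d\|f\|_2
$$
with $c>0$ universal. Combined with \eqref{eq_1736} this gives $R\geq c\lambda^{-1/2}$, that is, $C_P(\mu)=\lambda^{-1}\leq c^{-2}R^2$. The difficulty is that $f$ is mean zero and may be nearly orthogonal to low-degree polynomials. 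So I do not expect $d=1$ to work, and $d$ will probably have to be chosen in terms of $\lambda$, or a weighted sum over all $d$ will have to be used.

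\textbf{Step 1: express $\cT_d f$ through iterated inverse Laplacians.} Expanding $e^{\langle z,x\rangle-\Lambda_\mu(z)}$ shows that $\cT_d f$ is the $d$-th mixed cumulant of $(f(X),X,\ldots,X)$. It is the symmetrization of $\frac1{d!}\int f\,x^{\otimes d}\,d\mu$ corrected by products of lower moments of $\mu$, and these corrections are controlled because $\cov(\mu)\leq\id$. I would then rewrite the moments using \eqref{eq_ibp}. Define tensor-valued functions recursively by
$$
g_0=f,\qquad g_{k+1}=\nabla_0(-L)^{-1}g_k .
$$
Lemma~\ref{lem_poisson} shows that each $g_k$ lies in $\cA_0$. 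Moreover $\int g\,x_j\,d\mu=\int\partial_j(-L)^{-1}g\,d\mu$, so pairing $g_k$ against a linear function peels off one factor of $x$ and produces the mean of $\nabla(-L)^{-1}g_k$. The centered part $g_{k+1}$ is what passes to the next order. The aim is a formula expressing $\cT_d f$, up to a factor like $1/d!$, as the symmetrization $\cS_d$ of the mean of $\nabla(-L)^{-1}g_{d-1}$, plus terms of lower order.

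\textbf{Step 2: upper bounds on the chain.} By \eqref{eq_inverse_gradient}, $\|g_{k+1}\|_2\leq\lambda^{-1/2}\|g_k\|_2$, so $\|g_k\|_2\leq\lambda^{-k/2}$. For the eigenfunction itself, $g_1=\lambda^{-1}\nabla_0 f$ and $\|\nabla f\|_2^2=\lambda$. The Bochner identity \eqref{eq_bochner}, together with $\nabla^2V\geq0$, controls $\nabla^2$ of the inverse Laplacians. These facts should make the chain almost saturate the bound $\lambda^{-k/2}$ at every step.

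\textbf{Step 3: the lower bound (main obstacle).} We need to show that the norm $\lambda^{-k/2}$ is not lost entirely to the centering at each step, but actually appears in the symmetric Taylor coefficient. I would use Lemma~\ref{lem_tensor_recovery}. Write $\|g_d\|_2^2$ as a pairing $\int\langle g_d,g_d\rangle\,d\mu$, and integrate by parts $d$ times on one copy. This yields a $3d$-index tensor $T$, symmetric in its first $d$ and in its next $2d$ indices, built from $\cT_d f$-type and $\cT_{2d}f$-type quantities. Lemma~\ref{lem_tensor_recovery} bounds $|T|\leq4^d|\cS_{2d}T|$, and $\cS_{2d}T$ is again a genuine Taylor coefficient. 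This converts the lower bound $\|g_d\|_2\gtrsim(c\lambda^{-1/2})^d$ into a lower bound on some $|\cT_{d'}f|$ with $d'\in\{d,2d\}$, losing only a factor $C^d$, which can be absorbed. What I expect to be the hardest point is obtaining the lower bound on $\|g_d\|_2$ itself. For this I would exploit $Lf=-\lambda f$, so that $(-L)^{-1}$ acts on $f$ as multiplication by $\lambda^{-1}$. I would also use the commutation relation $\partial_iL=L\partial_i-\sum_j(\partial_{ij}V)\partial_j$ to show, inductively and using log-concavity, that $(-L)^{-1}$ applied to $\nabla^k f$ still gains a factor comparable to $\lambda^{-1}$ in energy.
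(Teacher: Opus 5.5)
Your chain $g_k$ is, up to positive scalars, the paper's $\nabla_0h_k$, and Steps~1--2 are correct. The gap is the reduction and Step~3. You want to lower-bound $|\cT_d f|$ for the eigenfunction $f$ alone. That would prove the theorem while using \eqref{eq_1736} for a single function, and nothing in your plan delivers it.

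\textbf{Where the symmetrization blocks you.} Iterating Lemma~\ref{lem_taylor_ibp} gives the exact identity $\cT_d f=\cS_d\int\nabla(-L)^{-1}g_{d-1}\,d\mu$, with no lower-order terms. What can be bounded from below is the unsymmetrized mean $\int\nabla(-L)^{-1}g_{d-1}\,d\mu$, and $\cS_d$ may destroy it. The $g_k$ are only approximately symmetric in $L^2(\mu)$. For $d\geq2$, the only available way to turn $L^2$-closeness into closeness of Taylor coefficients is \eqref{eq_1736} applied to the error functions, not to $f$.

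\textbf{Why Step~3 does not work as described.} $\|g_d\|_2^2$ is quadratic in $f$. Integrating by parts yields integrals of products of derivatives, not Taylor coefficients. Taylor coefficients are linear in $f$ and arise only as means, via $\cT_1(-Lu)=\int\nabla u\,d\mu$ and $\cT_{d+1}(-Lu)=\cS_{d+1}\cT_d(\nabla u)$. Your heuristic that the chain nearly saturates $\|g_k\|_2\approx\lambda^{-k/2}$ is also false: for a Gaussian, $f$ is linear and already $g_1=0$. Energy is not preserved along the chain. At each step it is either passed on or lost to centering, and the part lost to centering is exactly what is a Taylor coefficient.

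\textbf{What the paper does instead.} Its proof is built on this dichotomy and argues by contradiction, not by a lower bound on $\cT_df$.
\begin{enumerate}
\item \emph{Energy identity.} Normalize the chain as $h_{i+1}=\alpha_{i+1}(-L)^{-1}\nabla_0h_i$, so that $\|\nabla h_{i+1}\|_2=\|\nabla_0h_i\|_2$. Then $\lambda_1=v_{N+1}+\sum_{i\leq N}p_i$, where $p_1\leq\lambda_1^2$ and $p_i=|\int\nabla h_i\,d\mu|^2=\alpha_i^2|\cT_1(\nabla_0h_{i-1})|^2$ for $i\geq2$.
\item \emph{Bochner monotonicity.} Bochner's formula with $\nabla^2V\geq a\,\id$, $a>0$, gives $\|Lh_i\|_2^2\geq\|Lh_{i+1}\|_2^2+\chi_i+av_i$. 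Hence $v_i\to0$, and $\alpha_i^2\leq2\lambda_1$ up to the first $N$ with $v_{N+1}<\lambda_1/2$. Also $\sum_i\chi_i\leq\lambda_1^2$, which controls the $L^2$-asymmetry of $\nabla_0h_i$ (Lemmas~\ref{lem_1651} and~\ref{lem_block_symmetry}).
\item \emph{Doubling.} Lemma~\ref{lem_tensor_recovery} is applied to $\cT_d(\cS_{2d}\nabla_0h_i)$, while the error $\cT_d(\nabla_0h_i-\cS_{2d}\nabla_0h_i)$ is bounded using \eqref{eq_1736}. With $S_d=\sum_{i\leq N-d}|\cT_d(\nabla_0h_i)|^2$, this gives $S_d\leq(C\lambda_1)^dS_{2d}+4dC^d\lambda_1R^{2d}$.
\item \emph{Contradiction.} Iterating along $d=1,2,4,\dots$ shows $S_1$ is small when $\lambda_1R^2$ is small. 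This contradicts $\sum_{i\leq N}p_i>\lambda_1/2$.
\end{enumerate}

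Your plan contains none of these three ingredients: the energy identity, the Bochner step that caps $\alpha_i$ and makes the asymmetry summable, and the use of the hypothesis on the auxiliary asymmetric parts.
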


The remainder of this section is devoted to an exposition of a proof of Theorem \ref{thm_1032}, along the lines of \cite{SZ}.
One of the proof ideas is reminiscent of the
 higher-order Stein-kernel
construction of Fathi \cite{fathi}; integrating a test function against a centered, tensor-valued function $u$
can be expressed as the integral of the gradient of the test function
against  $\nabla(-L)^{-1}u$.

\medskip
We will consider an eigenfunction of the Laplace
operator $L$ associated with $\mu$, and perform many iterations of
``apply $(-L)^{-1}$ to the centered gradient of the function''.
As in (\ref{eq_1513}) above, let $f\in\cA$ with $\|f\|_{L^2(\mu)}=1$ satisfy
$$
 Lf=-\lambda_1f.
$$
In particular, $\int_{\RR^n}f\,d\mu=0$. Set $h_1=f$.
For $i\geq1$, in the case where $h_i$ is a linear function
(including the zero function) we define $h_{i+1} = 0$ and $\alpha_{i+1} = 0$.
In the case where $h_i$ is {\it not} a linear function, we define
\begin{equation}
 h_{i+1}=\alpha_{i+1}(-L)^{-1}\nabla_0 h_i,
 \label{eq_1049}
\end{equation}
where $\nabla_0 u=\nabla u-\int_{\RR^n}\nabla u\,d\mu$ is the centered
gradient defined in \eqref{eq_1045}, and where
\begin{equation}
 \alpha_{i+1}
 =\frac{\| \nabla_0 h_i\|_2}
        {\|\nabla(-L)^{-1}\nabla_0 h_i\|_2}
\label{eq_alpha_normalization} \end{equation}
is a normalizing coefficient.
The fact that $h_{i+1}$ is well-defined
via \eqref{eq_1049}
follows from Lemma~\ref{lem_poisson}.
Note that $h_{i+1}(x)$ is a tensor with $i$ indices for any $x \in \RR^n$.
Unfortunately, it is not a symmetric tensor in general.

\medskip Note that when $\alpha_{i+1} > 0$, by (\ref{eq_poisson_energy}) and (\ref{eq_alpha_normalization}),
\begin{equation} \alpha_{i+1}^2\geq\lambda_1.
\label{eq_1529} \end{equation}
In fact, we remark that $\alpha_{i+1}$ equals
the ratio of the $L^2(\mu)$-norm and the $H^{-1}(\mu)$-norm of $\nabla_0 h_i$, where for $u \in \cA_0$,
$$ \| u \|_{H^{-1}(\mu)} = \sqrt{ \langle (-L)^{-1} u, u \rangle_{L^2(\mu)} } = \sup \left \{ \int_{\RR^n} \langle u, v \rangle d\mu \, ; \, \int_{\RR^n} |\nabla v|^2 d \mu \leq 1 \right\}. $$
In the last supreumum, the tensor-valued function $v$ on $\RR^n$ has the same number of indices as $u$.

Later on, we will mostly be concerned with the case where $\alpha_{i+1}^2$ differs from $\lambda_1$ by a factor
of at most $2$. The choice of the normalizing constant $\alpha_{i+1}$ ensures that
\begin{equation}
\| \nabla h_{i+1} \|_2 = \| \nabla_0 h_i \|_2 = \left \| \nabla h_i \, - \, \int_{\RR^n} (\nabla h_i) d \mu \right \|_2. \label{eq_1059}
\end{equation}
In order to study the decay of the Dirichlet energy of $h_i$, we denote
\begin{equation}
 v_i=\|\nabla h_i\|_2^2 \label{eq_1946} \end{equation}
 so that $v_1 = \|\nabla f \|_2^2 = \lambda_1$.
 Since centering can only decrease the $L^2(\mu)$-norm, we see from (\ref{eq_1059}) that the sequence $(v_i)$ is decreasing.
In fact,
\begin{equation}  p_i := v_{i} - v_{i+1} = \left|\int_{\RR^n}\nabla h_i\,d\mu\right|^2.
\label{eq_1449} \end{equation}
Thus, for any $N \geq 1$,
\begin{equation}
\lambda_1 = v_{N+1} + \sum_{i=1}^{N} p_i.
\label{eq_1124} \end{equation}
Later on we will show that the Dirichlet energy $v_{N+1}$
tends to zero as $N \to \infty$, thus it is the sum in (\ref{eq_1124}) that will be used to estimate $\lambda_1$.
 The non-symmetry of the tensor $\nabla_0 h_{i+1}$ will be dealt with
 in the following two lemmas:

\begin{lemma} For $i \geq 1$ set
\begin{equation}
 \chi_i
 =\|\nabla^2h_i-\alpha_{i+1}\nabla h_{i+1}\|_2^2.
\label{eq_1149} \end{equation}
Then for $i \geq 1$
there exists a tensor-valued function $\tilde{T}$ on $\RR^n$, symmetric in its first $2$ indices, such that
\begin{equation}
\left\|\nabla_0 h_{i+1}- \tilde{T} \right\|_2 \leq \sqrt{ \frac{\chi_i}{\lambda_1} }.
\label{eq_1644} \end{equation}
\label{lem_1651}
\end{lemma}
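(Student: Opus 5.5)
The plan is to take for $\tilde T$ a rescaled, centered Hessian of $h_i$. By construction $h_{i+1}$ is $\alpha_{i+1}(-L)^{-1}$ applied to $\nabla_0 h_i$, so $\nabla h_{i+1}$ should be close to $\alpha_{i+1}^{-1}\nabla^2 h_i$ exactly when $\chi_i$ is small. The Hessian is symmetric in its first two indices, so it is the natural candidate.

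\medskip First I dispose of the degenerate case $\alpha_{i+1}=0$. This happens exactly when $h_i$ is linear, and then $h_{i+1}=0$ by definition. I take $\tilde T=0$; then $\nabla_0 h_{i+1}-\tilde T=0$ and (\ref{eq_1644}) is trivial.

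\medskip Otherwise $\alpha_{i+1}>0$, and by (\ref{eq_1529}) we have $\alpha_{i+1}^2\ge\lambda_1$. I set
$$
\tilde T=\alpha_{i+1}^{-1}\Big(\nabla^2h_i-\int_{\RR^n}\nabla^2h_i\,d\mu\Big)=\alpha_{i+1}^{-1}\nabla_0\nabla h_i .
$$
Since $h_i\in\cA$ is smooth, $\partial_j\partial_k h_i=\partial_k\partial_j h_i$ componentwise. The two derivative indices are the first two indices under the paper's convention, so $\tilde T$ is symmetric in its first two indices.

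The index bookkeeping should be checked once. Here $h_{i+1}$ carries the index that $\nabla_0 h_i$ added, as its first index. Then $\nabla h_{i+1}$ places a new derivative index in front of it. So $\nabla h_{i+1}$ and $\nabla^2 h_i$ have the same index structure, and the difference in (\ref{eq_1149}) is meaningful.

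\medskip Next, since centering is linear,
$$
\nabla_0h_{i+1}-\tilde T=-\alpha_{i+1}^{-1}\Big(G-\int_{\RR^n} G\,d\mu\Big),\qquad G=\nabla^2h_i-\alpha_{i+1}\nabla h_{i+1}.
$$
Centering does not increase the $L^2(\mu)$ norm, exactly as in (\ref{eq_centered_gradient}). Hence $\|\nabla_0h_{i+1}-\tilde T\|_2\le\alpha_{i+1}^{-1}\|G\|_2=\alpha_{i+1}^{-1}\sqrt{\chi_i}$. Combining this with $\alpha_{i+1}^{-1}\le\lambda_1^{-1/2}$ from (\ref{eq_1529}) gives (\ref{eq_1644}).

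There is no serious obstacle. The only points needing care are the index convention, which makes the Hessian symmetric in precisely the first two slots, and the separate treatment of the case $\alpha_{i+1}=0$.
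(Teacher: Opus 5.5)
Your proof is correct and is essentially the paper's argument: the same choice $\tilde T=\alpha_{i+1}^{-1}\nabla_0(\nabla h_i)$ when $\alpha_{i+1}>0$, the fact that centering does not increase the $L^2(\mu)$ norm, the bound $\alpha_{i+1}^2\geq\lambda_1$ from (\ref{eq_1529}), and $\tilde T=0$ in the degenerate case. The paper's proof also records the orthogonal decomposition $\|\nabla^2h_i\|_2^2=\alpha_{i+1}^2v_{i+1}+\chi_i$; this is not needed for the present lemma, but it is used later in Lemma~\ref{lem_bochner_step}.
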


\begin{proof}
 Note that $\nabla^2 h_i = \nabla( \nabla_0 h_i)$
 is symmetric in its first $2$ indices, since it is a Hessian.
By  the integration by parts
(\ref{eq_ibp}) and by (\ref{eq_1049}) and (\ref{eq_1059}),
\begin{align*}
 \int_{\RR^n}
   \langle \nabla^2h_i,\nabla h_{i+1}\rangle\,d\mu
 =-\int_{\RR^n}\langle \nabla_0 h_i,Lh_{i+1}\rangle\,d\mu
 =\int_{\RR^n}\langle \nabla_0 h_i, \alpha_{i+1} \nabla_0 h_i \rangle\,d\mu = \alpha_{i+1}\|\nabla h_{i+1} \|_2^2.
 \end{align*}
Thus the orthogonal projection in $L^2(\mu)$ of $\nabla^2h_i$ on the subspace
spanned by $\nabla h_{i+1}$ equals $\alpha_{i+1} \nabla h_{i+1}$.
Recall from (\ref{eq_1946}) that $\| \nabla h_{i+1}\|_2^2 = v_{i+1}$.
The definition (\ref{eq_1149}) and the orthogonality thus yield
\begin{equation}
 \|\nabla^2h_i\|_2^2
 =\alpha_{i+1}^2v_{i+1}+\chi_i.
 \label{eq_hessian_decomposition}
\end{equation}
If $\alpha_{i+1} > 0$ then subtracting the mean in (\ref{eq_1149})  gives
\begin{equation}
\left\|\frac{\nabla_0(\nabla h_i)}{\alpha_{i+1}} - \nabla_0 h_{i+1}  \right\|_2^2
\leq \left\|\frac{\nabla^2 h_i}{\alpha_{i+1}} - \nabla h_{i+1} \right\|_2^2
 =\frac{\chi_i}{\alpha_{i+1}^2}
 \leq\frac{\chi_i}{\lambda_1},
 \label{eq_centered_poisson_error}
\end{equation}
where centering decreases the $L^2(\mu)$ norm, and
$\alpha_{i+1}^2\geq\lambda_1$ by (\ref{eq_1529}). The tensor $\nabla_0(\nabla h_i) = \nabla^2 h_i - \int \nabla^2 h_i d \mu$
is symmetric in its first two indices, and the lemma follows when $\alpha_{i+1} > 0$. If $\alpha_{i+1}=0$, then $\nabla_0 h_{i+1}=0$ and we may take $\tilde T=0$.
\end{proof}
\medskip
The preceding lemma controls the failure of symmetry in the
first two indices. Let us extend this control to the first
$q$ indices.

\begin{lemma}
\label{lem_block_symmetry}
Let $2\leq q\leq i$, and assume that
$0<\alpha_j^2\leq2\lambda_1$
for $i-q+2\leq j\leq i$.
Then
\begin{equation}
 \|\nabla_0 h_i-\cS_q\nabla_0 h_i\|_2^2
 \leq\frac{C^q}{\lambda_1}
       \sum_{j=i-q+1}^{i-1}\chi_j,
 \label{eq_block_symmetry}
\end{equation}
where $C\geq1$ is a universal constant.
Here $\cS_q$ symmetrizes the first $q$ indices of $\nabla_0 h_i$,
leaving the remaining indices fixed.
\end{lemma}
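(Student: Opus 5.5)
We argue by induction on $q$, using the fact that the symmetric group on the first $q$ indices is generated by the adjacent transpositions $\tau_1,\ldots,\tau_{q-1}$, where $\tau_k$ swaps indices $k$ and $k+1$. A tensor is close to its $\cS_q$-symmetrization as soon as it is almost invariant under each generator. Precisely, write $P_\sigma$ for the index permutation. Then
$$ T-\cS_qT=\frac{1}{q!}\sum_\sigma (T-P_\sigma T), $$
and each $\sigma$ is a word of length at most $\binom{q}{2}$ in the $\tau_k$. By telescoping and the fact that $P_\sigma$ is an isometry in $L^2(\mu)$, we get $\|T-P_\sigma T\|_2\leq \binom q2\max_k\|T-P_{\tau_k}T\|_2$. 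Hence
$$ \|T-\cS_qT\|_2^2\leq q^4\sum_{k=1}^{q-1}\|T-P_{\tau_k}T\|_2^2, $$
and the polynomial factor $q^4$ is absorbed into $C^q$. So it suffices to bound $\|\nabla_0h_i-P_{\tau_k}\nabla_0h_i\|_2^2$ for each $k\leq q-1$ by $C^{k}\lambda_1^{-1}$ times a sum of the $\chi_j$ with $i-q+1\leq j\leq i-1$.

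Fix $k$ and set $j=i-k$, so that $j\geq i-q+1\geq1$. Iterating \eqref{eq_1049} and the defining identity from Lemma~\ref{lem_1651}, the first $k$ derivative indices of $\nabla_0 h_i$ come from successive gradients. More precisely, $\nabla^{k+1}h_j$ is fully symmetric in its first $k+1$ indices, and it approximates $\alpha_{j+1}\cdots\alpha_i\,\nabla h_i$ up to centering. We telescope: for $m=j,\ldots,i-1$, write
$$ \nabla^{i-m}\big(\nabla^2h_m-\alpha_{m+1}\nabla h_{m+1}\big)\cdot \prod_{l=m+2}^{i}\alpha_l . $$
The obstacle is that we only control $\chi_m=\|\nabla^2h_m-\alpha_{m+1}\nabla h_{m+1}\|_2^2$, not the higher derivatives of this difference. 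To avoid that, the induction is arranged to use only one derivative at a time, via Lemma~\ref{lem_1651} and \eqref{eq_centered_poisson_error}. At step $m$ (with $m=i-1$ first), $\nabla_0h_{m+1}$ is within $\sqrt{\chi_m/\lambda_1}$ of the tensor $\alpha_{m+1}^{-1}\nabla_0(\nabla h_m)$, which is symmetric in its first two indices. To transfer the symmetry of indices $k,k+1$ for $k\geq2$ down to $h_i$, we need to know how $\nabla_0 h_{i}$'s indices $2,\ldots$ relate to those of $h_{i-1}$. Index $1$ of $\nabla_0 h_i$ is the new derivative. The remaining indices are those of $h_i$, which are inherited from $(-L)^{-1}\nabla_0h_{i-1}$, and the operator $(-L)^{-1}$ acts componentwise and commutes with index permutations.

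So the plan is to prove, for $k\geq2$, the recursive inequality
\begin{equation*}
\|\nabla_0h_i-P_{\tau_k}\nabla_0h_i\|_2\leq C\,\|\nabla_0h_{i-1}-P_{\tau_{k-1}}\nabla_0h_{i-1}\|_2,
\end{equation*}
where $\tau_k$ on $\nabla_0 h_i$ swaps indices belonging to $h_i$, namely positions $k-1,k$ of $h_i$. Since $h_i=\alpha_i(-L)^{-1}\nabla_0h_{i-1}$ and $P$ commutes with $(-L)^{-1}$, we have $h_i-P h_i=\alpha_i(-L)^{-1}(\nabla_0h_{i-1}-P\nabla_0h_{i-1})$. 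Then \eqref{eq_inverse_gradient} gives $\|\nabla_0(h_i-Ph_i)\|_2\leq\alpha_i\lambda_1^{-1/2}\|\nabla_0h_{i-1}-P\nabla_0h_{i-1}\|_2\leq\sqrt2\,\|\cdots\|_2$, using the hypothesis $\alpha_i^2\leq2\lambda_1$. This is exactly where that assumption enters, and it requires $i\geq i-q+2$ at each level used. Iterating $k-1$ times reduces to the base case $\tau_1$ for $\nabla_0h_{j+1}$ with $j=i-k$. That base case is handled by Lemma~\ref{lem_1651}: for $\tilde T$ symmetric in its first two indices, $\|\nabla_0h_{j+1}-P_{\tau_1}\nabla_0h_{j+1}\|_2\leq2\|\nabla_0h_{j+1}-\tilde T\|_2\leq2\sqrt{\chi_j/\lambda_1}$. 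Altogether $\|\nabla_0h_i-P_{\tau_k}\nabla_0h_i\|_2^2\leq 4\cdot2^{k-1}\chi_{i-k}/\lambda_1$. The levels used have indices $j+1,\ldots,i$, all at least $i-q+2$, so the hypothesis covers them. Summing over $k$ and combining with the first paragraph gives \eqref{eq_block_symmetry}.

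The main point to verify carefully is the bookkeeping in this recursion. One must check that commuting $P$ with $\nabla_0$ and with $(-L)^{-1}$ is legitimate. This holds because $\nabla_0$ adds a new first index, so a permutation of later indices commutes with it, and $(-L)^{-1}$ is componentwise. One must also check that the degenerate case $\alpha=0$ causes no trouble; it is excluded by the hypothesis $\alpha_j>0$.
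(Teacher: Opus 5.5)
Your proof is correct and follows essentially the same route as the paper. You reduce to adjacent transpositions via the telescoping and isometry bound (with the $q^4$ factor). You handle $\tau_1$ at level $i-k+1$ by Lemma~\ref{lem_1651}. You then transfer $\tau_k$ on $\nabla_0 h_i$ to $\tau_{k-1}$ on $\nabla_0 h_{i-1}$, using that $(-L)^{-1}$ acts componentwise and that $\alpha_i\nabla_0(-L)^{-1}$ has norm at most $\sqrt2$; this gives exactly the paper's bound $2^{k+1}\chi_{i-k}/\lambda_1$. The opening phrase ``induction on $q$'' and the abandoned paragraph about telescoping higher derivatives are unnecessary, since the argument you actually carry out is direct.
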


\begin{proof}
\medskip
Let $\tau_\ell$ exchange tensor positions $\ell$ and $\ell+1$.
By Lemma~\ref{lem_1651}, for each relevant $j$, the tensor
$\nabla_0 h_j$ is within distance $\sqrt{\chi_{j-1}/\lambda_1}$
of a tensor-valued function fixed by $\tau_1$. Hence
$$
 \|\nabla_0 h_j-\tau_1\nabla_0 h_j\|_2^2
 \leq\frac{4\chi_{j-1}}{\lambda_1}.
$$
For exchanging other indices, we will use the Poisson equation
(\ref{eq_1049}) which implies that for $2 \leq \ell < j$,
\begin{equation}
 \nabla_0 h_j-\tau_\ell \nabla_0 h_j
 =\alpha_j\nabla_0(-L)^{-1}
   \left(\nabla_0 h_{j-1}-\tau_{\ell-1}\nabla_0 h_{j-1}\right).
 \label{eq_1608}
\end{equation}
Indeed, $\nabla_0$ adds a new first index, whereas $(-L)^{-1}$
acts componentwise. By (\ref{eq_inverse_gradient}) and the
assumption that $\alpha_j^2 \leq 2 \lambda_1$, the operator $\alpha_j\nabla_0(-L)^{-1}$
has norm at most $\sqrt2$ in $L^2(\mu)$. Iterating $\ell-1$ times gives
\begin{equation}
 \|\nabla_0 h_i-\tau_\ell \nabla_0 h_i\|_2^2
 \leq  2^{\ell-1} \|\nabla_0 h_{i-\ell+1}  -  \tau_1\nabla_0 h_{i-\ell+1} \|_2^2
 \leq \frac{2^{\ell+1}}{\lambda_1}\chi_{i-\ell}
 \qquad(1\leq\ell<q).
 \label{eq_adjacent_indices}
\end{equation}
Every permutation $\pi$ of the first $q$ positions can be written as
$\pi=\tau_{\ell_1}\cdots\tau_{\ell_m}$ with $m\leq q^2$ and
$1\leq\ell_r<q$. Thus we have the  telescoping identity
$$
 \nabla_0 h_i-\pi \nabla_0 h_i
 =
 \sum_{r=1}^m
 \bigl(\tau_{\ell_1}\cdots\tau_{\ell_{r-1}}\bigr)
 \bigl(\nabla_0 h_i-\tau_{\ell_r}\nabla_0 h_i\bigr),
$$
where the empty product is the identity.
Since permutations of the indices preserve the norm, the Cauchy-Schwarz inequality
and $m\leq q^2$ give
\begin{equation}
 \|\nabla_0 h_i-\pi \nabla_0 h_i\|_2^2
 \leq m\sum_{r=1}^m
          \|\nabla_0 h_i-\tau_{\ell_r}\nabla_0 h_i\|_2^2
 \leq q^4\sum_{\ell=1}^{q-1}
          \|\nabla_0 h_i-\tau_\ell \nabla_0 h_i\|_2^2.
\label{eq_2007} \end{equation}
Since $\cS_q\nabla_0 h_i$ is the average over all $\pi$ of
the tensor $\pi \nabla_0 h_i$, by (\ref{eq_adjacent_indices}) and (\ref{eq_2007}),
$$
 \|\nabla_0 h_i-\cS_q\nabla_0 h_i\|_2^2
 \leq\frac{q^4\,2^q}{\lambda_1}
       \sum_{j=i-q+1}^{i-1}\chi_j.
$$
This proves (\ref{eq_block_symmetry}).
\end{proof}

The following lemma is the only place in the proof of Theorem \ref{thm_1032}
where log-concavity is substantially used.

\begin{lemma}
\label{lem_bochner_step}
For $i\geq1$,
\begin{equation}
 \|Lh_i\|_2^2 \geq \|Lh_{i+1}\|_2^2+\chi_i+a v_i,
 \label{eq_bochner_step}
\end{equation}
where $a>0$ is as in (\ref{eq_2256}).
\end{lemma}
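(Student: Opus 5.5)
The plan is to apply the integrated Bochner identity \eqref{eq_bochner} componentwise to the tensor-valued function $h_i$. Then I use the decomposition \eqref{eq_hessian_decomposition} of the Hessian, and finally identify $\|Lh_{i+1}\|_2^2$ exactly through the Poisson equation \eqref{eq_1049}. Each component of $h_i$ lies in $\cA$: this holds for $h_1=f$, and for $i\geq 1$ it follows inductively from Lemma~\ref{lem_poisson}, because $\nabla_0 h_i\in\cA_0$ componentwise. So \eqref{eq_bochner} may be applied to every component, and summing over components gives
$$
 \|Lh_i\|_2^2=\|\nabla^2 h_i\|_2^2+\sum_{i_1,\ldots,i_{i-1}}\int_{\RR^n}\langle\nabla^2V\,\nabla (h_i)_{i_1\ldots i_{i-1}},\nabla (h_i)_{i_1\ldots i_{i-1}}\rangle\,d\mu .
$$
By the lower bound $\nabla^2V\geq a\,\id$ from \eqref{eq_2256}, the last sum is at least $a\|\nabla h_i\|_2^2=a v_i$. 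This is the only place where log-concavity (in its strong form) enters.

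Next, suppose $h_i$ is not linear, so that $\alpha_{i+1}>0$. By \eqref{eq_hessian_decomposition} from the proof of Lemma~\ref{lem_1651}, we have $\|\nabla^2h_i\|_2^2=\alpha_{i+1}^2v_{i+1}+\chi_i$. It remains to identify $\alpha_{i+1}^2v_{i+1}$ with $\|Lh_{i+1}\|_2^2$. By \eqref{eq_1049}, $-Lh_{i+1}=\alpha_{i+1}\nabla_0h_i$, so $\|Lh_{i+1}\|_2^2=\alpha_{i+1}^2\|\nabla_0 h_i\|_2^2$. By the normalization \eqref{eq_1059} this equals $\alpha_{i+1}^2\|\nabla h_{i+1}\|_2^2=\alpha_{i+1}^2v_{i+1}$. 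Combining the three displays yields \eqref{eq_bochner_step}, in fact with equality in every step except the use of $\nabla^2V\geq a\,\id$.

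The degenerate case is when $h_i$ is linear. Then $h_{i+1}=0$ and $\alpha_{i+1}=0$, so $\nabla^2h_i=0$ and $\chi_i=0$ by \eqref{eq_1149}, and also $Lh_{i+1}=0$. The inequality then reduces to $\|Lh_i\|_2^2\geq a v_i$, which is exactly what Bochner gives.

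I do not expect a real obstacle here. The only points needing care are two. First, Bochner's identity is stated for scalar functions in $\cA$, so it must be applied componentwise, using the regularity from Lemma~\ref{lem_poisson}. Second, the exact identity \eqref{eq_hessian_decomposition} relies on the orthogonal-projection computation already carried out in the proof of Lemma~\ref{lem_1651}.
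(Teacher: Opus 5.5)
Your proposal is correct and matches the paper's proof. The paper also applies Bochner's identity componentwise with $\nabla^2 V \geq a\,\id$, uses the decomposition \eqref{eq_hessian_decomposition}, and identifies $\alpha_{i+1}^2 v_{i+1} = \|Lh_{i+1}\|_2^2$ from the Poisson equation \eqref{eq_1049} and the normalization \eqref{eq_1059}; your separate treatment of the degenerate case where $h_i$ is linear is a harmless addition the paper leaves implicit.
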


\begin{proof}
The expression $\alpha_{i+1}^2 v_{i+1}$ appearing in (\ref{eq_hessian_decomposition})
has an additional meaning. By (\ref{eq_1059}) and (\ref{eq_1946}),
\begin{equation}
\alpha_{i+1}^2v_{i+1} =
\|\alpha_{i+1} \nabla_0 h_i \|_2^2 = \|Lh_{i+1}\|_2^2, \label{eq_1512} \end{equation}
and hence by (\ref{eq_hessian_decomposition}),
\begin{equation}   \|\nabla^2h_i\|_2^2
 = \|Lh_{i+1}\|_2^2 +\chi_i. \label{eq_1236} \end{equation}
 Applying the Bochner identity (\ref{eq_bochner}) to each
component of $h_i$ and using $\nabla^2V\geq a\,\id$, we obtain
$$
 \|Lh_i\|_2^2
 \geq\|\nabla^2h_i\|_2^2+a\|\nabla h_i\|_2^2
 =\|\nabla^2h_i\|_2^2+a v_i.
$$
Thus (\ref{eq_bochner_step}) follows from (\ref{eq_1236}).
\end{proof}

\begin{corollary} We have $\displaystyle \lim_{N \to \infty} v_N = 0$.
\label{cor_1454}
\end{corollary}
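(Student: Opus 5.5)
Summing the inequality of Lemma~\ref{lem_bochner_step} over $i=1,\ldots,N$ gives a telescoping bound. Discarding the nonnegative terms $\|Lh_{N+1}\|_2^2$ and $\chi_i$, we obtain
$$
 a\sum_{i=1}^N v_i \leq \|Lh_1\|_2^2 - \|Lh_{N+1}\|_2^2 \leq \|Lh_1\|_2^2 = \lambda_1^2\|f\|_2^2=\lambda_1^2,
$$
since $h_1=f$ satisfies $Lf=-\lambda_1 f$ and $\|f\|_2=1$. Because $a>0$ and the right-hand side does not depend on $N$, the series $\sum_{i\geq1} v_i$ converges. In particular $v_N\to0$. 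Alternatively, since the sequence $(v_i)$ is nonincreasing by (\ref{eq_1059}), one even gets the quantitative bound $N v_N\leq \sum_{i\leq N}v_i\leq \lambda_1^2/a$, so $v_N\leq \lambda_1^2/(aN)$.

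Two routine points need checking. First, Lemma~\ref{lem_bochner_step} must apply for every $i$, including the degenerate case where some $h_i$ is linear, so that $h_{i+1}=0$ and $\alpha_{i+1}=0$. In that case $\nabla h_{i+1}=0$ and $\chi_i=\|\nabla^2 h_i\|_2^2=0$. The inequality then reduces to $\|Lh_i\|_2^2\geq a v_i$, which is exactly the Bochner bound used in its proof. All later terms vanish, so the argument is consistent. Second, each $h_i$ must lie in $\cA$, so that the Bochner identity (\ref{eq_bochner}) and the integration by parts (\ref{eq_ibp}) are justified. This follows inductively from Lemma~\ref{lem_poisson}, since $\nabla_0 h_i\in\cA_0$ componentwise.

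I do not expect a real obstacle here. The only care needed is in the bookkeeping of the degenerate linear case and in noting that $\|Lh_1\|_2^2=\lambda_1^2$ is finite.
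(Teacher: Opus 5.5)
Your proof is correct and matches the paper's argument: sum Lemma~\ref{lem_bochner_step} over $i\leq N$, discard the nonnegative terms $\|Lh_{N+1}\|_2^2$ and $\chi_i$, and use $a>0$ with $\|Lh_1\|_2^2=\lambda_1^2$ to get convergence of $\sum v_i$. Your additional remarks are valid but not needed for the statement: the rate $v_N\leq\lambda_1^2/(aN)$ from monotonicity of $(v_i)$, the check of the degenerate linear case, and the inductive membership $h_i\in\cA$.
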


\begin{proof}
Summing (\ref{eq_bochner_step}) over $i=1,\ldots,N$ gives
$$ \|Lh_1\|_2^2 \geq
 \|Lh_{N+1}\|_2^2+\sum_{i=1}^N\chi_i
 +a\sum_{i=1}^N v_i.
$$
All terms on the right-hand side are nonnegative while $a > 0$. Therefore $\sum_{i=1}^{\infty} v_i$ is a convergent
series and  $v_N\to0$.
\end{proof}

Recall the tilted measures $\mu_z$ and the function $F_f$ defined in
(\ref{eq_1314}) and (\ref{eq_1315}) for $f\in L^2(\mu)$.
The Laplace operator $L_z$ associated with $\mu_z$ is related to the
Laplace operator $L$ associated with $\mu$ by
$$
 L_z u = Lu + z\cdot\nabla u.
$$
This relation leads to the following elegant
identity: for every $u\in\cA$ and $z \in \RR^n$,
\begin{equation}
F_{-Lu}(z) = \langle z,F_{\nabla u}(z)\rangle.
\label{eq_1335}
\end{equation}
Indeed, by integration by parts,
\begin{align*}
 F_{-Lu}(z)
 & = -\int_{\RR^n} Lu(x)\,d\mu_z(x)
 \\ & = - \int_{\RR^n} L_z u(x)\,d\mu_z(x)
    +\int_{\RR^n}z\cdot\nabla u(x)\,d\mu_z(x)
  = \langle z,F_{\nabla u}(z)\rangle,
\end{align*}
since $\int_{\RR^n}L_z u\,d\mu_z=0$. See also \cite[Section 4]{KlartagLog}.

\medskip Recall the Taylor tensor $\cT_d f$ from (\ref{eq_1337}).
For tensor-valued functions $u$ on $\RR^n$, we define~$F_u$ and~$\cT_d u$
componentwise, placing the $d$ Taylor indices before the original
tensor indices of $u$.
Recall from Section \ref{sec_background} that $\cS_qT$ denotes the average of a tensor $T$
over all permutations of its first $q$ indices.

\begin{lemma}
\label{lem_taylor_ibp}
Let $u \in \cA$ be a function, either scalar-valued or tensor-valued. Then,
\begin{equation}
 \cT_1(-Lu)=\int_{\RR^n}\nabla u\,d\mu,
 \label{eq_taylor_ibp_first}
\end{equation}
and for $d \geq 1$,
\begin{equation}
 \cT_{d+1}(-Lu) = \cS_{d+1}\cT_d(\nabla u).
 \label{eq_taylor_ibp}
\end{equation}
\end{lemma}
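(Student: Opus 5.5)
The plan is to differentiate the identity \eqref{eq_1335}, $F_{-Lu}(z)=\langle z,F_{\nabla u}(z)\rangle$, at $z=0$ to the required order and read off the Taylor coefficients. We work componentwise, so we may assume $u$ is scalar-valued. For tensor-valued $u$ the original tensor indices are carried along passively after the Taylor indices, and $\cS_{d+1}$ acts only on the first $d+1$ indices. Note that $u\in\cA$ gives $\nabla u\in\cA$, so $F_{\nabla u}$ is defined and smooth: the regular measure $\mu$ has Gaussian tails, so all exponential moments are finite and we may differentiate under the integral. Identity \eqref{eq_1335} holds for every $z\in\RR^n$ and $u\in\cA$, so we may differentiate it freely.

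The first formula \eqref{eq_taylor_ibp_first} comes from differentiating once at $z=0$. Set $G(z)=F_{\nabla u}(z)\in\RR^n$. The gradient of $z\mapsto\langle z,G(z)\rangle$ has $j$-th component
$$G_j(z)+\sum_k z_k\,\partial_j G_k(z).$$
At $z=0$ this equals $G(0)=\int\nabla u\,d\mu$. Since $\cT_1=\nabla F(0)$, this is \eqref{eq_taylor_ibp_first}.

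For \eqref{eq_taylor_ibp}, compare Taylor expansions as polynomial identities. Write $F_{\nabla u}(z)\sim\sum_{d\ge0}\cT_d(\nabla u)[z^{\otimes d}]$, where $\cT_d(\nabla u)$ has $d$ Taylor indices followed by the gradient index, and $\cT_0(\nabla u)=\int\nabla u\,d\mu$. Pairing with $z$ on the last index, the degree-$(d+1)$ homogeneous part of $\langle z,F_{\nabla u}(z)\rangle$ is the polynomial
$$z\mapsto \cT_d(\nabla u)[z,\dots,z,z],$$
with all $d+1$ slots filled by $z$. The degree-$(d+1)$ part of $F_{-Lu}$ is $\cT_{d+1}(-Lu)[z^{\otimes(d+1)}]$. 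The tensor $\cT_{d+1}(-Lu)=\frac{1}{(d+1)!}\nabla^{d+1}F_{-Lu}(0)$ is symmetric, because it is a derivative of a scalar smooth function. A homogeneous polynomial determines its symmetric coefficient tensor uniquely, namely as the symmetrization of any representing tensor. Hence
$$\cT_{d+1}(-Lu)=\cS_{d+1}\cT_d(\nabla u).$$
Here $\cT_d(\nabla u)$ is viewed as a tensor with $d+1$ indices (Taylor indices first, then the gradient index), and $\cS_{d+1}$ symmetrizes all of them. To be precise, one can instead apply the Leibniz rule directly. Taking $\partial_{i_1}\cdots\partial_{i_{d+1}}$ of $\sum_k z_kG_k(z)$ at $0$ gives
$$\sum_{r=1}^{d+1}\partial_{i_1}\cdots\widehat{\partial_{i_r}}\cdots\partial_{i_{d+1}}G_{i_r}(0).$$
Dividing by $(d+1)!$ and writing $\nabla^dG_{i_r}(0)=d!\,\cT_d(\nabla u)$ turns this into $\frac{1}{d+1}\sum_r$ of the cyclic placements. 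Because the first $d$ indices of $\cT_d(\nabla u)$ are already symmetric, this average over $r$ equals the full symmetrization $\cS_{d+1}$.

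The only mildly delicate step is this combinatorial bookkeeping: checking that averaging over which slot carries the gradient index, combined with the existing symmetry in the first $d$ Taylor indices, is exactly $\cS_{d+1}$. The index convention, with Taylor indices before tensor indices and $\nabla$ adding the first index, must also be tracked consistently for tensor-valued $u$. Everything else is smoothness of $F$, which follows from the finite exponential moments noted above.
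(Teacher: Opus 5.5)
Your proposal is correct and matches the paper's proof: differentiate the identity $F_{-Lu}(z)=\langle z,F_{\nabla u}(z)\rangle$ at $z=0$, note that exactly one derivative falls on the factor $z$, and observe that, since $\cT_d(\nabla u)$ is already symmetric in its $d$ Taylor indices, averaging over which slot carries the gradient index is exactly $\cS_{d+1}$. Your homogeneous-polynomial viewpoint is only a repackaging of that same Leibniz computation.
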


\begin{proof}
Apply (\ref{eq_1335}) to each component of $u$.
Differentiating $d+1$ times at $z=0$ and using the product rule gives
\begin{equation}
 \frac{1}{(d+1)!}\nabla_z^{d+1}F_{-Lu}(0)
 = \frac{1}{(d+1)!}\nabla_z^{d+1} \left \langle z,F_{\nabla u}(z) \right \rangle (0) =
 \frac{1}{d!}
 \cS_{d+1}\left(
 \nabla_z^d F_{\nabla u}(0)
 \right).
\label{eq_1356} \end{equation}
Indeed, exactly one derivative must fall on the factor $z$:
the term in which no derivative falls on it vanishes at $z=0$,
and terms with two or more derivatives on it vanish identically.
Thus, the entry corresponding to $i_1,\ldots,i_{d+1}$ in the middle tensor in (\ref{eq_1356}) equals
$$
 \frac{1}{(d+1)!}\sum_{k=1}^{d+1}
 \left(\prod_{\substack{1\leq j\leq d+1\\j\neq k}}
 \frac{\partial}{\partial z_{i_j}}\right)
 F_{\partial u/\partial x_{i_k}}(0),
$$
which is the corresponding entry of the tensor on the right-hand side of (\ref{eq_1356}).
Thus (\ref{eq_1356}) holds true.
Now (\ref{eq_taylor_ibp_first}) and (\ref{eq_taylor_ibp}) follow from (\ref{eq_1356}) and the definition
of the Taylor tensors.
\end{proof}

Lemma~\ref{lem_taylor_ibp} and (\ref{eq_1049}) imply the following:

\begin{corollary}
For $i \geq 1$ and $d \geq 1$,
\begin{equation}
 \cS_{d+1}\cT_d(\nabla_0 h_{i+1}) = \cT_{d+1}(-L h_{i+1})
 =\alpha_{i+1} \cT_{d+1}(\nabla_0 h_{i}).
 \label{eq_taylor_step}
\end{equation}
\label{cor_2042}
\end{corollary}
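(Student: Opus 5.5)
The plan is to apply Lemma~\ref{lem_taylor_ibp} with $u=h_{i+1}$, and then to use the Poisson equation (\ref{eq_1049}) to identify $-Lh_{i+1}$. Before doing so, two cases must be separated.

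\emph{Degenerate case.} Suppose $h_i$ is linear, so that by definition $h_{i+1}=0$ and $\alpha_{i+1}=0$. Then $\nabla_0 h_{i+1}=0$ and $Lh_{i+1}=0$, so both the left-hand and middle expressions in (\ref{eq_taylor_step}) vanish. The right-hand side vanishes as well, since $\alpha_{i+1}=0$. (In fact $\nabla_0 h_i=0$ in this case too.) So the identity holds trivially.

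\emph{Main case.} Here $h_{i+1}\in\cA_0$ by Lemma~\ref{lem_poisson}. For the first equality, apply (\ref{eq_taylor_ibp}) with $u=h_{i+1}$, which gives $\cT_{d+1}(-Lh_{i+1})=\cS_{d+1}\cT_d(\nabla h_{i+1})$. We then need to replace $\nabla h_{i+1}$ by $\nabla_0 h_{i+1}$. These differ by the constant tensor $c=\int\nabla h_{i+1}\,d\mu$. For a constant function $c$, the tilt average satisfies $F_c(z)=c$ for all $z$, because each $\mu_z$ is a probability measure. Hence $\cT_d c=0$ for every $d\geq1$, and by linearity of $\cT_d$ we get $\cT_d(\nabla h_{i+1})=\cT_d(\nabla_0 h_{i+1})$.

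For the second equality, apply $L$ componentwise to (\ref{eq_1049}). Since $(-L)^{-1}$ inverts $-L$ on $\cA_0$ and $\nabla_0 h_i\in\cA_0$ has mean zero, this gives $-Lh_{i+1}=\alpha_{i+1}\nabla_0 h_i$. Taking $\cT_{d+1}$ of both sides and using linearity yields $\cT_{d+1}(-Lh_{i+1})=\alpha_{i+1}\cT_{d+1}(\nabla_0 h_i)$.

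There is no real obstacle here. The only points needing care are that the Taylor tensors kill constants, which is where $d\geq1$ is used, and that the index conventions match. On the latter: both sides place the $d+1$ Taylor indices first, followed by the tensor indices of $h_{i+1}$, which number $i$. The symmetrization $\cS_{d+1}$ acts on the $d$ Taylor indices together with the gradient index, and this is exactly how it appears in Lemma~\ref{lem_taylor_ibp}.
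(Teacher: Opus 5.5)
Your proof is correct and follows the paper's route: apply Lemma~\ref{lem_taylor_ibp} to $u=h_{i+1}$, and use the Poisson equation (\ref{eq_1049}), which gives $-Lh_{i+1}=\alpha_{i+1}\nabla_0 h_i$. The details you add are the ones the paper leaves implicit: $\cT_d$ annihilates constants for $d\geq1$, so $\nabla h_{i+1}$ may be replaced by $\nabla_0 h_{i+1}$, and the degenerate case $\alpha_{i+1}=0$ holds trivially.
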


Let $N\geq1$ be the minimal index with
\begin{equation}
 v_{N+1}<\lambda_1/2.
 \label{eq_1459}
\end{equation}
Since $v_1=\lambda_1$, such an index exists by
Corollary~\ref{cor_1454}. For $2\leq i\leq N$, the minimality of $N$ gives
$v_i\geq\lambda_1/2$ and $\alpha_i > 0$, while (\ref{eq_bochner_step}) gives
$\|Lh_i\|_2^2\leq\|Lh_1\|_2^2=\lambda_1^2$.
Hence, by (\ref{eq_1512}), for $i=2,\ldots,N$,
\begin{equation}
\alpha_i^2=\frac{\|Lh_i\|_2^2}{v_i}\leq2\lambda_1.
 \label{eq_alpha_bounds}
\end{equation}
Corollary \ref{cor_2042}
connects Taylor coefficients
of degree $d$ of $F_{\nabla_0 h_i}$ with coefficients of degree $d+1$
of $F_{\nabla_0 h_{i-1}}$. Combining its iteration with
Lemma~\ref{lem_block_symmetry}, we obtain the following
estimate, connecting Taylor coefficients of degree $d$ of
$F_{\nabla_0h_i}$ with Taylor coefficients of degree $2d$ of
$F_{\nabla_0h_{i-d}}$.

\begin{lemma}
\label{lem_taylor_doubling}
If $d \geq 1$ and  $2d\leq i\leq N$ then
\begin{equation}
 |\cT_d(\nabla_0 h_i)|^2
 \leq(C\lambda_1)^d|\cT_{2d}(\nabla_0 h_{i-d})|^2
 +\frac{C^dR^{2d}}{\lambda_1}
   \sum_{j=i-2d+1}^{i-1}\chi_j,
 \label{eq_taylor_doubling}
\end{equation}
where $C \geq 1$ is a universal constant.
\end{lemma}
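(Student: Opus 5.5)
The plan is to combine three ingredients: the Taylor recursion of Corollary~\ref{cor_2042}, the block-symmetry estimate of Lemma~\ref{lem_block_symmetry} with $q=2d$, and the tensor-recovery Lemma~\ref{lem_tensor_recovery}. The latter turns a bound on a symmetrization back into a bound on the tensor itself.

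\textbf{Step 0 (hypothesis for tensor-valued functions).} Hypothesis (\ref{eq_1736}) extends to tensor-valued $u$ by summing squares over components:
$$
|\cT_d u|^2=\sum_{\text{comp}}|\cT_d u_{\text{comp}}|^2\leq R^{2d}\|u\|_2^2 .
$$
So $u\mapsto \cT_d u$ has norm at most $R^d$ from $L^2(\mu)$ to tensors, and it is linear.

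\textbf{Step 1 (admissibility of the $\alpha_j$).} Fix $d\geq 1$ and $2d\leq i\leq N$. Every index $j$ with $i-2d+2\leq j\leq i$ satisfies $2\leq j\leq N$. Hence by (\ref{eq_alpha_bounds}) and the remarks before it, $0<\alpha_j^2\leq 2\lambda_1$ for all such $j$. Thus Lemma~\ref{lem_block_symmetry} applies with $q=2d\leq i$. Writing $g=\nabla_0 h_i$ and $E=g-\cS_{2d}g$ (where $\cS_{2d}$ acts on the first $2d$ indices of $g$), it gives
$$
\|E\|_2^2\leq \frac{C^{2d}}{\lambda_1}\sum_{j=i-2d+1}^{i-1}\chi_j .
$$

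\textbf{Step 2 (iterated Taylor recursion).} Let $\cS_{2d}$ now act on the first $2d$ positions of the Taylor tensor, namely the $d$ Taylor indices followed by the first $d$ indices of $g$. I claim by induction on $k=0,\ldots,d$ that
$$
\cS_{2d}\cT_d(\nabla_0 h_i)=\alpha_i\alpha_{i-1}\cdots\alpha_{i-k+1}\,\cS_{2d}\cT_{d+k}(\nabla_0 h_{i-k}).
$$
For the inductive step, insert $\cS_{2d}=\cS_{2d}\cS_{d+k+1}$, which is valid since $d+k+1\leq 2d$ for $k<d$. Then apply Corollary~\ref{cor_2042} with $(i,d)$ replaced by $(i-k-1,d+k)$. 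This is legitimate because $i-k\geq d+1\geq 2$, so the relevant $\alpha_{i-k}$ is positive and the $h$'s are given by (\ref{eq_1049}).

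At $k=d$ the tensor $\cT_{2d}(\nabla_0 h_{i-d})$ is already symmetric in its first $2d$ positions, so
$$
|\cS_{2d}\cT_d(\nabla_0 h_i)|^2\leq (2\lambda_1)^d\,|\cT_{2d}(\nabla_0 h_{i-d})|^2 .
$$

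\textbf{Step 3 (recovery).} The tensor $T=\cT_d(\cS_{2d}g)$ is symmetric in its first $d$ (Taylor) indices and in its next $2d$ indices. Lemma~\ref{lem_tensor_recovery} therefore gives $|T|\leq 4^d|\cS_{2d}T|$. Using linearity, Step 0, and the fact that $\cS_{2d}$ is a contraction, this yields
\begin{align*}
|\cT_d g| &\leq |T|+R^d\|E\|_2\\
&\leq 4^d\,|\cS_{2d}\cT_d g| + 4^d\,|\cS_{2d}\cT_d E| + R^d\|E\|_2\\
&\leq 4^d\,|\cS_{2d}\cT_d g| + 2\cdot 4^d R^d\,\|E\|_2 .
\end{align*}
Squaring with $(a+b)^2\leq 2a^2+2b^2$ and inserting Steps 1 and 2 gives (\ref{eq_taylor_doubling}), with a new universal constant $C$ absorbing $2\cdot16^d\cdot 2^d$ and the factor $C^{2d}$ from Step 1.

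\textbf{Main obstacle.} I expect the difficulty to be bookkeeping rather than analysis. One must keep straight which positions each $\cS_{2d}$ acts on. In particular, $\cS_{2d}$ applied to $g$ and $\cS_{2d}$ applied to the Taylor tensor do not commute into each other. This is exactly why the error $E$ has to be paid for twice.

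One must also check that the index ranges for the $\alpha_j$ required by Lemma~\ref{lem_block_symmetry} and by the $d$-fold recursion both lie in $[2,N]$. This is where the assumption $2d\leq i\leq N$ is used.
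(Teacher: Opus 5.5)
Your proposal is correct and follows essentially the same route as the paper: iterate Corollary~\ref{cor_2042} to bound $\cS_{2d}\cT_d(\nabla_0 h_i)$, compare with $\widetilde T=\cT_d(\cS_{2d}\nabla_0 h_i)$ via Lemma~\ref{lem_block_symmetry} and the componentwise Taylor bound, then apply Lemma~\ref{lem_tensor_recovery} to $\widetilde T$ together with the contraction property of $\cS_{2d}$, paying for the asymmetry error twice exactly as the paper does. The only cosmetic difference is that you run the induction with $\cS_{2d}$ applied throughout, whereas the paper uses the intermediate identities $\cS_{d+k}T=\bigl(\prod_{j=i-k+1}^{i}\alpha_j\bigr)\cT_{d+k}(\nabla_0 h_{i-k})$.
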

\begin{proof}
Set $T=\cT_d(\nabla_0 h_i)$. The first  $d$ indices of this tensor are Taylor indices,
and its remaining $i$ indices are ``derivative'' indices.
Let us first estimate the norm of the partially-symmetrized tensor $\cS_{2d}T$.
Since $\cS_{r+1}\cS_r=\cS_{r+1}$, repeated application
of (\ref{eq_taylor_step}) gives
\begin{equation}
 \cS_{d+k}T
 =\left(\prod_{j=i-k+1}^{i}\alpha_j\right)
   \cT_{d+k}(\nabla_0 h_{i-k})
 \qquad(1\leq k\leq d).
 \label{eq_1539}
\end{equation}
Taking $k=d$ and using (\ref{eq_alpha_bounds}), we obtain
\begin{equation}
 |\cS_{2d}T|^2
 \leq(2\lambda_1)^d|\cT_{2d}(\nabla_0 h_{i-d})|^2.
 \label{eq_iterated_taylor}
\end{equation}
In order to recover $T$ from its partial symmetrization, set
$$
 \widetilde T=\cT_d(\cS_{2d}\nabla_0 h_i).
$$
Here we symmetrize the first $2d$ indices of $\nabla_0 h_i$
before applying $\cT_d$. Thus $\widetilde T$ is symmetric
in its first $d$ indices and, separately, in its
next $2d$ indices.
By linearity of $\cT_d$,
$$
 T-\widetilde T=\cT_d(\nabla_0 h_i-\cS_{2d}\nabla_0 h_i).
$$
Set $u=\nabla_0 h_i-\cS_{2d}\nabla_0 h_i$, and write $u=(u_J)$, where
$J\in\{1,\ldots,n\}^i$ runs over the tensor's components.
Applying the Taylor bound (\ref{eq_1736}) to each scalar
function $u_J$ gives  $$ |\cT_d u_J|^2\leq R^{2d}\|u_J\|_2^2. $$
Summing over $J$, we obtain
$$
 |\cT_d u|^2
 =\sum_J|\cT_d u_J|^2
 \leq R^{2d}\sum_J\|u_J\|_2^2
 =R^{2d}\|u\|_2^2.
$$
Thus, by using
Lemma~\ref{lem_block_symmetry} with $q=2d$,
\begin{align}
 |T-\widetilde T|^2 = |\cT_d u|^2  \leq R^{2d}\|u\|_2^2 = R^{2d}\|\nabla_0 h_i-\cS_{2d}\nabla_0 h_i\|_2^2
 \leq\frac{C^{2d}R^{2d}}{\lambda_1}
       \sum_{j=i-2d+1}^{i-1}\chi_j.
 \label{eq_approximate_symmetry}
\end{align}
We may now apply Lemma~\ref{lem_tensor_recovery} for the tensor $\widetilde T$,
which satisfies its requirements. Together with the contraction property
of $\cS_{2d}$, this yields
\begin{align}
 |T| \leq |T-\widetilde T| + |\widetilde{T}|
 \leq|T-\widetilde T|+4^d|\cS_{2d}\widetilde T|. \label{eq_1711}
\end{align}
The conclusion of the lemma follows from (\ref{eq_iterated_taylor}), (\ref{eq_approximate_symmetry}) and (\ref{eq_1711}),
after increasing the value of the universal constant $C$.
\end{proof}

Recall that $p_i = v_i - v_{i+1}$ and that by our choice of $N$, the sum $\sum_{i=1}^N p_i$ has the order of magnitude
of $\lambda_1$. The following lemma will be used to compare this sum with $\lambda_1^2$.

\begin{lemma}
We have
\begin{equation}
 \sum_{i=1}^N p_i
 \leq\lambda_1^2+
 2\lambda_1\sum_{i=1}^{N-1}|\cT_1(\nabla_0 h_i)|^2.
 \label{eq_loss_first_taylor}
\end{equation}
\end{lemma}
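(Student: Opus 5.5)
The plan is to evaluate each $p_i=\left|\int\nabla h_i\,d\mu\right|^2$ exactly through the first Taylor coefficient, using \eqref{eq_taylor_ibp_first} from Lemma~\ref{lem_taylor_ibp}. That identity says $\int_{\RR^n}\nabla h_i\,d\mu=\cT_1(-Lh_i)$ for every $h_i\in\cA$. So everything reduces to computing $-Lh_i$ from the recursive definition of the $h_i$ and bounding the resulting first Taylor tensor. The term $i=1$ will produce the $\lambda_1^2$ on the right-hand side, and the terms $2\leq i\leq N$ will produce the sum.

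\emph{The term $i=1$.} Here $h_1=f$ and $-Lf=\lambda_1 f$, so by linearity $\int\nabla h_1\,d\mu=\lambda_1\cT_1 f$. Since $\mu$ is centered, differentiating $F_f$ once at the origin gives
$$
\cT_1 f=\nabla F_f(0)=\int f(x)\,x\,d\mu(x)-\int f\,d\mu\int x\,d\mu=\int f(x)\,x\,d\mu(x).
$$
For a unit vector $\theta$, Cauchy--Schwarz gives $\langle \cT_1 f,\theta\rangle\leq \|f\|_2\bigl(\int\langle x,\theta\rangle^2d\mu\bigr)^{1/2}\leq 1$. This uses $\|f\|_2=1$ and the assumption that the covariance of $\mu$ is at most the identity. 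Hence $|\cT_1 f|\leq1$ and $p_1\leq\lambda_1^2$.

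\emph{The terms $2\leq i\leq N$.} If $h_{i-1}$ is linear, then $h_i=0$ and $p_i=0$, so there is nothing to prove. Otherwise \eqref{eq_1049} gives $-Lh_i=\alpha_i\nabla_0h_{i-1}$. Hence $\int\nabla h_i\,d\mu=\alpha_i\cT_1(\nabla_0h_{i-1})$, which is also the case $d=0$ analogue of Corollary~\ref{cor_2042}. It follows that
$$
p_i=\alpha_i^2\,|\cT_1(\nabla_0h_{i-1})|^2\leq 2\lambda_1|\cT_1(\nabla_0h_{i-1})|^2,
$$
where the inequality is the bound $\alpha_i^2\leq2\lambda_1$ from \eqref{eq_alpha_bounds}, valid exactly for $2\leq i\leq N$. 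Summing over $i=1,\ldots,N$ and shifting the index $i-1\mapsto i$ gives \eqref{eq_loss_first_taylor}.

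The only point needing care is the range of indices: \eqref{eq_alpha_bounds} was established only for $2\le i\le N$, which is precisely what the sum requires. Beyond that, the case $i=1$ must use the covariance assumption rather than \eqref{eq_1736}. Using \eqref{eq_1736} there would give $p_1\le R^2\lambda_1^2$, which is too weak.
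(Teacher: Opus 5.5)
Your proof is correct and is essentially the paper's argument. The paper bounds $p_1=\lambda_1^2\bigl|\int xf\,d\mu\bigr|^2\leq\lambda_1^2\|\cov(\mu)\|_{op}\leq\lambda_1^2$ by integrating by parts against linear functions, which is exactly \eqref{eq_taylor_ibp_first} applied to $f$ as you do; for $2\leq i\leq N$ it writes $p_i=|\cT_1(-Lh_i)|^2=\alpha_i^2|\cT_1(\nabla_0 h_{i-1})|^2$ and invokes \eqref{eq_alpha_bounds}. Your case distinction on whether $h_{i-1}$ is linear is harmless but vacuous in this range, since minimality of $N$ gives $v_i\geq\lambda_1/2>0$ and hence $\alpha_i>0$ for $2\leq i\leq N$.
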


\begin{proof}
Recall that $f=h_1$ satisfies $Lf=-\lambda_1f$ and $\|f\|_2=1$.
Integrating by parts using (\ref{eq_ibp}) against linear functions,
we obtain, as in \cite[Equation (2.10)]{KlartagLog},
$$
 p_1=\left|\int_{\RR^n}\nabla f(x)\,d\mu(x)\right|^2
 =\lambda_1^2\left|\int_{\RR^n}xf(x)\,d\mu(x)\right|^2
 \leq\lambda_1^2\|\cov(\mu)\|_{op}
 \leq\lambda_1^2.
$$
For $i\geq2$, Lemma~\ref{lem_taylor_ibp} and the definitions
(\ref{eq_1049}) and (\ref{eq_1449}) give
\begin{equation}
 p_i=\left|\int_{\RR^n}\nabla h_i\,d\mu\right|^2
 =|\cT_1(-Lh_i)|^2
 =\alpha_i^2|\cT_1(\nabla_0 h_{i-1})|^2.
 \label{eq_1450}
\end{equation}
Now (\ref{eq_loss_first_taylor})
follows by substituting the bound (\ref{eq_alpha_bounds}) into (\ref{eq_1450}), summing over
$i=2,\ldots,N$,  and adding the bound for $p_1$.
\end{proof}

\begin{proof}[Proof of Theorem~\ref{thm_1032}]
Recall the choice of $N$ in (\ref{eq_1459}). By
(\ref{eq_1124}) and by summing (\ref{eq_bochner_step}), we have
\begin{equation}
 \sum_{i=1}^N p_i
 =\lambda_1-v_{N+1}>\frac{\lambda_1}{2},
 \qquad
 \sum_{i=1}^N\chi_i\leq \| L h_1 \|_2^2 = \lambda_1^2.
 \label{eq_half_energy}
\end{equation}
Let $C\geq1$ be the constant from Lemma~\ref{lem_taylor_doubling}.
For each integer $d\geq1$, set
$$
 S_d=\sum_{i=1}^{N-d} |\cT_d(\nabla_0 h_i)|^2,
$$
where an empty sum is zero. In particular,
$S_d=0$ for $d\geq N$.
We apply the Taylor bound (\ref{eq_1736}) component by component, and obtain
by (\ref{eq_1059}) and (\ref{eq_1946}),
$$
 |\cT_d(\nabla_0 h_i)|^2
 \leq R^{2d}\|\nabla_0 h_i\|_2^2
 =R^{2d}v_{i+1}\leq\lambda_1R^{2d}.
$$
Thus the indices $i<2d$ contribute at most
$(2d-1)\lambda_1R^{2d}$ to $S_d$.
For the remaining indices, sum (\ref{eq_taylor_doubling}) over
$2d\leq i\leq N-d$. Thus,
\begin{align}
  \label{eq_summed_taylor_doubling}
   S_d
 &\leq (2d-1)\lambda_1 R^{2d} +
 \sum_{i=2d}^{N-d} \left( (C\lambda_1)^d |\cT_{2d}(\nabla_0 h_{i-d})|^2 + \frac{C^dR^{2d}}{\lambda_1}
   \sum_{j=i-2d+1}^{i-1}\chi_j \right)
   \\ & \leq (2d-1)\lambda_1R^{2d} + (C\lambda_1)^dS_{2d}
       +\frac{2d \cdot C^dR^{2d}}{\lambda_1}
          \sum_{j=1}^N\chi_j \leq(C\lambda_1)^dS_{2d}
       +4dC^d\lambda_1R^{2d}. \notag
\end{align}
Here, we used the fact that  each
$\chi_j$ occurs at most $2d$ times, as well as the bound (\ref{eq_half_energy}).

\medskip
Assume by contradiction that
\begin{equation}
 \lambda_1R^2\leq\frac{1}{100C^2}.
\label{eq_2145} \end{equation}
Thus $C \lambda_1 \leq 1/100$ as $R,C \geq 1$.
Multiplying (\ref{eq_summed_taylor_doubling}) by
$(C\lambda_1)^{d-1}$ yields
\begin{align*}
 (C\lambda_1)^{d-1}S_d
 \leq(C\lambda_1)^{2d-1}S_{2d}
       +4dC^{2d-1}(\lambda_1R^2)^d
 \leq(C\lambda_1)^{2d-1}S_{2d}
       +\frac{4d}{100^d}.
\end{align*}
Choose an integer $m\geq1$ with $2^m\geq N$, and sum these
inequalities over $d=1,2,4,\ldots,2^{m-1}$.
Since $S_{2^m} = 0$, we have
$$
 S_1
 \leq4\sum_{k=0}^{m-1}\frac{2^k}{100^{2^k}}
 \leq4\sum_{d=1}^{\infty}\frac{d}{100^d}
 <\frac{1}{20}.
$$
Since $R,C\geq1$, our assumption (\ref{eq_2145}) also gives $\lambda_1\leq1/100$.
It now follows from (\ref{eq_loss_first_taylor}) that
$$
 \sum_{i=1}^N p_i
 \leq\lambda_1^2+2\lambda_1S_1
 <\lambda_1\left(\frac{1}{100}+\frac{1}{10}\right)
 <\frac{\lambda_1}{2},
$$
contradicting (\ref{eq_half_energy}).
Therefore $\lambda_1R^2>(100C^2)^{-1}$, and
$$
 C_P(\mu)=\lambda_1^{-1}\leq100C^2R^2.
$$
\end{proof}

\begin{remark} The bound (\ref{eq_1736}) on the Taylor coefficients
implies that $F_f(z)$ admits an analytic continuation to the complex ball of radius $1/R$ centered at the origin in $\CC^n$.
One might wonder if there is a complex-analytic proof of Theorem \ref{thm_1032}. We do not know if such a proof exists.
\end{remark}

\section{Cumulants and stochastic localization}
\label{sec_cumulants}

Let $\mu$ be a log-concave probability measure on $\RR^n$. Its
Laplace transform is finite in a neighborhood of the origin, and we
write, as in Section~\ref{sec_taylor},
$$
 \Lambda_\mu(z)=\log\int_{\RR^n}e^{\langle z,x\rangle}\,d\mu(x).
$$
The {\it cumulant tensor of order $m$} is
$$
 \kappa_m^\mu=\nabla^m\Lambda_\mu(0) \in (\RR^n)^{\otimes m}.
$$
We identify this symmetric tensor with the $m$-linear form $\kappa_m(\cdot,\ldots,\cdot)$. Thus
$\kappa_1^\mu \in \RR^n$ is the barycenter, $\kappa_2^\mu \in \RR^{n \times n}$ is the covariance
matrix, and $\kappa_3^\mu$ is the third centered moment tensor.
For $u\in\RR^n$, abbreviate
$\kappa_m^\mu(u)=\kappa_m^\mu(u,\cdot,\ldots,\cdot)$, and define
the positive semidefinite symmetric matrix $R_m^\mu \in \RR^{n \times n}$ by
\begin{equation}
 \langle R_m^\mu u,u\rangle
 =|\kappa_m^\mu(u)|^2
 =\sum_{i_2,\ldots,i_m=1}^n
   \kappa_m^\mu(u,e_{i_2},\ldots,e_{i_m})^2.
 \label{eq_cumulant_matrix}
\end{equation}
Here $e_1,\ldots,e_n$ are the standard basis vectors. Our goal in this section and the next two sections is to prove the following:

\begin{theorem}
\label{thm_cumulants}
There exists a universal constant $K\geq1$ such that, for every
isotropic log-concave probability measure $\mu$ on $\RR^n$ and every
integer $m\geq2$,
\begin{equation}
 R_m^\mu\leq K^{m-1}((m-1)!)^2\,\id.
 \label{eq_cumulant_bound}
\end{equation}
\end{theorem}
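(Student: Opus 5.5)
We argue by induction on $m$, following the outline in the introduction. For $m=2$, isotropy gives $\kappa_2^\mu=\id$, so $R_2^\mu=\id$. For $m=3$, the bound is the third-moment estimate (\ref{eq_intro_third}), i.e. $|\EE\langle X,u\rangle X\otimes X|^2\leq 8|u|^2$. This comes from the anisotropic Chen--Klartag thin-shell bound $\var(|Y|^2)\leq 8\tr(\Sigma^2)$, applied to a suitable linear image of $X$ and polarized. We also need this in a non-isotropic form: for a centered log-concave measure $\nu$ with covariance $\Sigma$, the bound $|\kappa_3^\nu(u,\cdot,\cdot)|^2\leq 8\langle\Sigma u,u\rangle\|\Sigma\|_{op}^2$, or, more usefully, a version normalized by $\Sigma^{-1/2}$ in the remaining slots. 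Both follow from the isotropic case by an affine change of variables, and both will be used along the localization path.

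For the induction step we run Eldan's stochastic localization $d\theta_t=C_t^{-1/2}dW_t$ started from $\mu$. Here $\mu_t$ is the tilt of $\mu$ by $e^{\langle\theta_t,x\rangle}$ together with the Gaussian factor $e^{-\frac12\langle B_t x,x\rangle}$, with $dB_t=C_t^{-1}dt$ and $C_t=\cov(\mu_t)$. The quadratic variation is the inverse covariance, as announced in the introduction. By Itô's formula the cumulants evolve as
\[
 d\kappa_m^{\mu_t}=\kappa_{m+1}^{\mu_t}(C_t^{-1/2}dW_t,\cdot,\ldots)+\tfrac12\sum(\text{contractions of }\kappa_{a}\otimes\kappa_b\text{ through }C_t^{-1},\ a+b=m+2)\,dt .
\]
This is the usual relation between derivatives of $\Lambda$ along $\theta$ and along $B$: the Gaussian factor contributes $-\frac12\partial_{B}\Lambda$, which in cumulant terms is a sum over splittings of $\kappa_{m+2}$ and of pairs $\kappa_{a+1}\otimes\kappa_{b+1}$. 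Write the normalized quantities $\tilde\kappa_m(t)=\kappa_m^{\mu_t}(C_t^{-1/2}\cdot,\ldots,C_t^{-1/2}\cdot)$, i.e. the cumulants of the isotropic image of $\mu_t$. The key idea is to integrate the drift equation for $\tilde\kappa_m$ (or rather for $\EE\langle R_m u,u\rangle$) from $t=0$ to $t=\infty$. As $t\to\infty$, $\mu_t$ becomes increasingly Gaussian, because $B_t\to\infty$ and strong log-concavity forces every normalized cumulant of order $\geq3$ to vanish by a Brascamp--Lieb type bound. Hence $\kappa_m^\mu$ equals $-\EE\int_0^\infty(\text{drift})\,dt$. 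In that drift, the terms involving $\kappa_{m+2}$ are eliminated by the representation itself: we use that $\kappa_m^{\mu_0}=\EE\kappa_m^{\mu_\infty}-\int\EE(\text{drift})$ where the second-order term has the form $\frac12\tr(C^{-1}\nabla^2)$ acting on $\Lambda$. In the isotropic normalization that term exactly cancels with the Itô correction from $C_t^{-1/2}$ rescaling. This leaves only products $\tilde\kappa_{a+1}\otimes\tilde\kappa_{b+1}$ with $a+b=m$, $a,b\geq1$, which are of lower order.

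Given that, we bound each product by the induction hypothesis applied to the isotropic images of $\mu_t$ (which are isotropic log-concave). The contraction in one slot gives
\[
|\tilde\kappa_{a+1}\star\tilde\kappa_{b+1}(u)|\le |\tilde\kappa_{a+1}(u)|\,\|\tilde\kappa_{b+1}\|_{op\text{-type}},
\]
and the operator-type norm is controlled by $\sup_v|\tilde\kappa_{b+1}(v)|$. We then convert back through $C_t$ and integrate in time using $\int_0^\infty\|C_t\|\cdot\|C_t^{-1}\|\ldots dt$. To handle this we would choose the time change so that $\frac{d}{dt}$ of the normalized quantity is damped at rate $\asymp m$, which is what produces the factor $1/m$ that we need. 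The combinatorial sum $\sum_{a+b=m}\binom{m}{a}a!\,b!\,K^{m-1}\lesssim K^{m-1}m!\cdot\ldots$ then closes with the $((m-1)!)^2$ normalization, provided $K$ is a large universal constant determined by the $m=3$ case.

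The main obstacle is the step claiming that the $\kappa_{m+2}$ term cancels or is harmless, together with getting the time integral to produce a gain of exactly order $1/m$ per step, so that the factorial growth is $(m-1)!$ and not worse. Our first attempt would be to work directly with $\langle R_m^{\mu_t}u,u\rangle$ in the $C_t$-normalized frame and compute its Itô derivative. There we hope to show $d\,\EE\langle\tilde R_m u,u\rangle\geq c\,m\,\EE\langle\tilde R_m u,u\rangle\,dt-(\text{lower-order products})\,dt$, via a Bochner-type positivity for the martingale term's quadratic variation. Integrating this Grönwall inequality gives the recursive bound.
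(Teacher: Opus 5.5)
Your setup matches the paper's. You use Eldan's process with quadratic variation $C_t^{-1}$, and a cumulant SDE whose martingale part is $\kappa_{m+1}(\cdot,\ldots,C_t^{-1/2}dW_t)$ and whose drift consists of contractions of $\kappa_a\otimes\kappa_b$ through $C_t^{-1}$ with $a+b=m+2$, with no surviving $\kappa_{m+2}$ term. You also apply the induction hypothesis to isotropic images of $\mu_t$. But the step that is supposed to close the induction is false.

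Take the energy $\cE_m(t)=|\kappa_m^{\mu_t}(u,C_t^{-1/2}\cdot,\ldots,C_t^{-1/2}\cdot)|^2$. The products with $a=1$ or $b=1$ are not lower order: they are $\kappa_2\otimes\kappa_m$ terms and produce $-m\,\kappa_m\,dt$. Together with $dC_t=-C_t\,dt+(\text{martingale})$ in the normalization, they contribute $-(m+1)\cE_m$ to the drift of $\cE_m$ (or $-m\,\cE_m$ if you also normalize the $u$-slot). This is a loss, not a gain. The only positive contribution is the quadratic variation of the martingale part, namely $\cE_{m+1}$, and there is no inequality of the form $\cE_{m+1}\gtrsim m\,\cE_m$.

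Concretely, take $\mu$ uniform on the isotropic cube $[-\sqrt3,\sqrt3]^n$ and $m=4$. At $t=0$ all odd cumulants vanish. So the martingale term ($\kappa_5$), the noise of $C_t$ ($\kappa_3$) and the genuinely lower-order products ($\kappa_3\otimes\kappa_3$) are all zero, and $\frac{d}{dt}\EE\cE_4(t)$ at $t=0$ equals $-5\,\cE_4(0)=-36/5<0$. So your hoped-for Gr\"onwall inequality $\frac{d}{dt}\EE\cE_m\geq c\,m\,\EE\cE_m-(\text{lower order})$ fails. Your first route, writing $\kappa_m^\mu$ as minus the integrated expected drift, has the same defect: that drift contains $m\,\kappa_m(t)$, and nothing in your induction hypothesis controls its time integral. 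Two smaller points: $d\theta_t$ needs the drift $C_t^{-1}a_t\,dt$ for $\mu_t$ to be a martingale, and Brascamp--Lieb does not make normalized higher cumulants vanish. The paper never needs the limit $t\to\infty$ beyond integrability.

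The missing idea is to keep the quadratic-variation term on the good side and accept an $m^2$ loss. The paper shows
$\drift\cE_m\geq\frac12\cE_{m+1}-c_1m^2\cE_m-2\langle\kappa_m(t)(u),L_m(t)(u)\rangle_t$,
where $L_m$ collects the products of cumulants of orders $3,\ldots,m-1$. The cross terms between $\kappa_{m+1}$ and the noise $H_{i,t}$ of $C_t$ are absorbed using the matrix form $\sum_iH_{i,t}C_t^{-1}H_{i,t}\leq 8C_t$ of the third-cumulant bound. Integrating in time gives
\[
\cE_m(0)+\frac12\int_0^\infty\EE\cE_{m+1}\,dt\leq c_1m^2\int_0^\infty\EE\cE_m\,dt+2\Big(\int_0^\infty\EE\cE_m\,dt\Big)^{1/2}\Big(\int_0^\infty\EE|L_m(t)(u)|_t^2\,dt\Big)^{1/2}.
\]
The term $\int_0^\infty\EE\cE_m\,dt$ on the right is not lower order, and it cannot be bounded pointwise without circularity. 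It is supplied by the previous level once you strengthen the induction hypothesis to $\cE_m(0)+\frac12\int_0^\infty\EE\cE_{m+1}\,dt\leq b_m$; the base case $m=2$ follows from $\EE\cE_3(t)\leq 8e^{-t}$. The $m^2$ loss is exactly what dictates $b_m/b_{m-1}=K(m-1)^2$, i.e. the $((m-1)!)^2$ growth, rather than any $1/m$ gain from damping.

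For $L_m$, bound the factor not containing $u$ pointwise via $R_{m-a+1}\leq b_{m-a+1}\id$ on the isotropic image of $\mu_t$. Bound the factor containing $u$ through the integrated estimate $\int_0^\infty\EE\cE_{a+1}\,dt\leq 2b_a$. With $b_m=K^{m-1}((m-1)!)^2$, each of the $m-3$ terms $\binom{m-1}{a-1}\sqrt{b_ab_{m-a+1}}$ equals $\sqrt{b_m}$, and the induction closes for a large universal $K$.
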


The case $m=2$ is immediate, since $R_2^\mu=\id$. The case $m=3$
is a difficult theorem, which follows from the recent solution of
the generalized thin-shell conjecture. The latter is obtained by
an anisotropic extension of the moment-measure argument of Chen
and Klartag \cite{ChenKlartag}; see, for example,
Bizeul \cite[Section~2]{BizeulMM} or Letwin \cite{letwin}. We record the third-cumulant
bound in the following form.

\begin{lemma}
\label{lem_third_cumulant}
Every isotropic log-concave probability measure $\mu$ satisfies
$R_3^\mu\leq8\,\id$.
\end{lemma}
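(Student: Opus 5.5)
The plan is to reduce Lemma~\ref{lem_third_cumulant} to the anisotropic thin-shell inequality recalled in the introduction. That inequality states that for every centered log-concave random vector $Y$ with covariance $\Sigma$,
$$\var(|Y|^2)\leq 8\tr(\Sigma^2).$$
Fix $u\in\RR^n$ and let $X$ be distributed according to $\mu$. Since $\mu$ is isotropic, $\kappa_3^\mu(u,e_i,e_j)=\EE[\langle X,u\rangle X_iX_j]$. Consequently
$$\langle R_3^\mu u,u\rangle=\left|\EE\big[\langle X,u\rangle X\otimes X\big]\right|^2 .$$
This is the Hilbert--Schmidt norm squared of the symmetric matrix $M_u=\EE[\langle X,u\rangle X\otimes X]$. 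By duality,
$$|M_u|=\sup\Big\{\tr(M_uA)=\EE\big[\langle X,u\rangle\langle AX,X\rangle\big]\Big\},$$
where the supremum runs over symmetric $A$ with $|A|\leq1$. It therefore suffices to show
$$\EE\big[\langle X,u\rangle\langle AX,X\rangle\big]^2\leq 8|u|^2|A|^2$$
for all symmetric $A$.

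First apply Cauchy--Schwarz. Since $\EE\langle X,u\rangle=0$, we may subtract the mean $\EE\langle AX,X\rangle=\tr A$ and get
$$\EE\big[\langle X,u\rangle\langle AX,X\rangle\big]^2\leq |u|^2\,\var\big(\langle AX,X\rangle\big).$$
The problem is now reduced to the quadratic-form variance bound
$$\var(\langle AX,X\rangle)\leq 8|A|^2 .$$

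Next obtain this bound from the anisotropic inequality. A positive semidefinite $A$ can be written as $A=B^2$ with $B$ symmetric. Take $Y=BX$, which is centered and log-concave with covariance $\Sigma=B^2=A$. Then $|Y|^2=\langle AX,X\rangle$, and the inequality gives $\var\leq 8\tr(A^2)=8|A|^2$.

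A general symmetric $A$ splits as $A=A_+-A_-$. Here I would not just use the triangle inequality, because it loses a factor $2$. Instead I would use the lifting trick. Let $X'$ be an independent copy of $X$, and consider $Y=(BX,\,B'X')$ with $B^2=A_+$ and $B'^2=A_-$. Then $|Y|^2=\langle A_+X,X\rangle+\langle A_-X',X'\rangle$. This does not directly produce the difference, so I expect the constant to be the main obstacle.

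My fallback is to invoke the generalized thin-shell statement in its symmetric-matrix form. Bizeul \cite[Section~2]{BizeulMM} proves, through the moment-measure argument, $\var(\langle AX,X\rangle)\leq 8|A|^2$ for arbitrary symmetric $A$ and isotropic log-concave $X$. I believe the argument there does not use the sign of $A$; it uses the Hessian metric applied to the quadratic function. So I would cite that form.

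If only the PSD form is available, the triangle split gives
$$\var\leq\big(\sqrt{8}|A_+|+\sqrt{8}|A_-|\big)^2\leq 16|A|^2 .$$
That yields $R_3^\mu\leq16\,\id$, which is equally sufficient for Theorem~\ref{thm_cumulants} up to the value of $K$.

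Once the variance bound is in hand, taking the supremum over $A$ gives $\langle R_3^\mu u,u\rangle\leq 8|u|^2$ for every $u\in\RR^n$, that is, $R_3^\mu\leq 8\,\id$.
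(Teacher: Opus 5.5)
Your argument is essentially the paper's proof: express $\langle R_3^\mu u,u\rangle$ as the squared Hilbert--Schmidt norm of $\EE[\langle X,u\rangle X\otimes X]$, pair it with symmetric $M$ of norm one, apply Cauchy--Schwarz to $\cov(\langle X,u\rangle,\langle MX,X\rangle)$, and invoke $\var(\langle MX,X\rangle)\leq 8|M|^2$ for \emph{all} symmetric $M$. The only difference is the source of that last bound. You rely on your belief that Bizeul's argument ignores the sign of $M$, whereas the paper quotes Letwin's Theorem~1.2, which states $\var(\langle MX,X\rangle)\leq 2\EE|\nabla\langle MX,X\rangle|^2=8|M|^2$ for every symmetric $M$; this makes the PSD reduction and lifting detour unnecessary, and your fallback via $A=A_+-A_-$ gives only $16$, not the constant $8$ stated in the lemma.
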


\begin{proof}
Let $X$ have law $\mu$. The quadratic-form estimate
\cite[Theorem~1.2]{letwin} gives, for every symmetric matrix $M$,
$$
 \var(\langle MX,X\rangle)
 \leq 2\EE|\nabla\langle MX,X\rangle|^2
 =8|M|^2.
$$
Since $X$ is centered and isotropic, the Cauchy--Schwarz inequality gives
$$
 |\langle\kappa_3^\mu(u),M\rangle|^2
 =|\cov(\langle X,u\rangle,\langle MX,X\rangle)|^2
 \leq8|u|^2|M|^2.
$$
Taking the supremum over all symmetric matrices $M$ with $|M|=1$ concludes
the proof.
\end{proof}

\medskip
The study of $R_3^\mu$ was motivated by its role in stochastic
localization, which we now introduce. This process was introduced
by Eldan \cite{eldan}. A slightly different version was later used
and popularized by Lee and Vempala \cite{LV}. We use Eldan's original process. One
advantage of this choice is the simple evolution of the mean
covariance, given in (\ref{eq_mean_covariance}) below.

\medskip
Fix an isotropic, log-concave probability measure $\mu$ in $\RR^n$. 
By a standard approximation argument, in proving 
Theorem \ref{thm_cumulants} we may assume that $\mu$ is compactly-supported. 
This ensures that the process
below is well-defined, that a strong solution to the stochastic differential equation exists and is unique, and that the moments of $\mu_t$ of all orders
remain bounded; see \cite{eldan} and the discussion in
\cite[Section~2.1 and Appendix~A]{Chen}.
Let $(B_t)_{t\geq0}$ be a standard Brownian motion in $\RR^n$ with $B_0 = 0$.
Following \cite{eldan}, set
\begin{equation}
 \frac{d\mu_t}{d\mu}(x)
 =\frac1{Z_t}\exp\left(\langle c_t,x\rangle
                      -\frac12\langle Q_tx,x\rangle\right),
 \qquad
 \begin{cases}
 dc_t=A_t^{-1}a_t\,dt+A_t^{-1/2}\,dB_t,\\
 dQ_t=A_t^{-1}\,dt,
 \end{cases}
 \label{eq_localization}
\end{equation}
where $c_0=Q_0=0$, where $Z_t$ is a normalizing constant that makes $\mu_t$ a probability measure, and
$$
 a_t=\int_{\RR^n}x\,d\mu_t(x),\qquad A_t=\cov(\mu_t).
$$
Each probability measure $\mu_t$ is log-concave, since
$Q_t\geq0$.
The expectation sign $\EE$ below stands for expectation with respect to the Brownian motion; integrals
against the random measure $\mu_t$ are written explicitly.

\medskip
The density $p_t=d\mu_t/d\mu$ satisfies the stochastic differential equation
$$
 dp_t(x)=p_t(x)\langle A_t^{-1/2}(x-a_t),dB_t\rangle.
$$
Consequently, for every measurable function $f$ bounded on the support of $\mu$,
\begin{equation}
 d\left(\int_{\RR^n}f\,d\mu_t\right)
 =\left\langle A_t^{-1/2}
     \int_{\RR^n}f(x)(x-a_t)\,d\mu_t(x),dB_t\right\rangle.
 \label{eq_localization_test}
\end{equation}
Applying this identity to the first and second moments, and using
It\^o's formula for $a_t \otimes a_t$, gives the well-known equations
\begin{equation}
 \begin{gathered}
 da_t=A_t^{1/2}\,dB_t,\qquad
 dA_t=\sum_{i=1}^nH_{i,t}\,dB_{t,i}-A_t\,dt,\\
 H_{i,t}=\kappa_3^{\mu_t}(\cdot,\cdot,A_t^{-1/2}e_i)
 =\int_{\RR^n}(x-a_t)(x-a_t)^\top
       \langle A_t^{-1/2}e_i,x-a_t\rangle\,d\mu_t(x).
 \end{gathered}
 \label{eq_covariance_dynamics}
\end{equation}
It follows that $(e^tA_t)_{t \geq 0}$ is a martingale, and hence
\begin{equation}
 \EE A_t=e^{-t}A_0=e^{-t}\,\id.
 \label{eq_mean_covariance}
\end{equation}
The Eldan process has a moment-generating property: the differential
of the cumulant of order $m$ involves the cumulant of order $m+1$.
We will use this property to propagate cumulant bounds inductively,
starting at order three.

\medskip
Set $\Lambda_t=\Lambda_{\mu_t}$ and $\kappa_m(t)=\kappa_m^{\mu_t}$,
and write
$$
 M_t(z)=\int_{\RR^n}e^{\langle z,x\rangle}\,d\mu_t(x).
$$
Applying (\ref{eq_localization_test}) gives
$$
 \begin{aligned}
 dM_t(z)
 &=\left\langle A_t^{-1/2}
       \int_{\RR^n}e^{\langle z,x\rangle}(x-a_t)\,d\mu_t(x),dB_t\right\rangle\\
 &=\left\langle A_t^{-1/2}(\nabla M_t(z)-M_t(z)a_t),dB_t\right\rangle.
 \end{aligned}
$$
Since $\Lambda_t=\log M_t$, It\^o's formula yields
\begin{equation}
 \begin{split}
 d\Lambda_t(z)
 ={}&\langle A_t^{-1/2}(\nabla\Lambda_t(z)-a_t),dB_t\rangle\\
 &-\frac12\left\langle\nabla\Lambda_t(z)-a_t,
             A_t^{-1}(\nabla\Lambda_t(z)-a_t)\right\rangle\,dt\\
 ={}&\langle A_t^{-1/2}v_t(z),dB_t\rangle
       -\frac12|A_t^{-1/2}v_t(z)|^2\,dt,
 \end{split}
 \label{eq_loglaplace_dynamics}
\end{equation}
where $$ v_t(z)=\nabla\Lambda_t(z)-a_t. $$
We differentiate this identity in $z$ to obtain the cumulant equations.
Write $[m]=\{1,\ldots,m\}$. Define an $m$-linear form $L_m(t)$ by
\begin{equation}
 \begin{split}
 L_m(t)(h_1,\ldots,h_m)
 =\sum_{\substack{J\subset[m],\ 1\in J\\2\leq|J|\leq m-2}}
 \Big\langle\kappa_{|J|+1}(t)(h_J,\cdot),
 A_t^{-1}\kappa_{m-|J|+1}(t)(h_{J^c},\cdot)\Big\rangle,
 \end{split}
 \label{eq_lower_cumulants}
\end{equation}
where for $J\subset[m]$ we write $h_J = (h_j)_{j \in J}$. 
Note that both $\kappa_{|J|+1}(t)(h_J,\cdot)$ and $\kappa_{m-|J|+1}(t)(h_{J^c},\cdot)$
are tensors of order $1$, i.e., they are vectors in $\RR^n$. 
Note also that (\ref{eq_lower_cumulants}) involves cumulants of order between $3$ and $m-1$.

\begin{lemma}
\label{lem_cumulant_dynamics}
For $m\geq3$,
\begin{equation}
 d\kappa_m(t)
 =\kappa_{m+1}(t)(\cdot,\ldots,\cdot,A_t^{-1/2}dB_t)
  -\bigl(m\kappa_m(t)+L_m(t)\bigr)\,dt.
 \label{eq_cumulant_dynamics}
\end{equation}
\end{lemma}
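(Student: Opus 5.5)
The plan is to differentiate the log-Laplace dynamics (\ref{eq_loglaplace_dynamics}) $m$ times in $z$ and evaluate at $z=0$, reading off each term with $\kappa_k(t)=\nabla^k\Lambda_t(0)$. Two facts drive the computation. First, $v_t(0)=\nabla\Lambda_t(0)-a_t=0$. Second, for $k\geq1$,
$$\nabla^k v_t(0)(h_1,\ldots,h_k)=\kappa_{k+1}(t)(h_1,\ldots,h_k,\cdot)\in\RR^n,$$
since $a_t$ does not depend on $z$. In particular $\nabla v_t(0)(h)=\kappa_2(t)(h,\cdot)=A_th$.

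\emph{Martingale part.} The term is linear in $\nabla\Lambda_t(z)$. Applying $\nabla_z^m$ at $0$ to $\langle A_t^{-1/2}(\nabla\Lambda_t(z)-a_t),dB_t\rangle$ gives $\nabla^{m+1}\Lambda_t(0)(h_1,\ldots,h_m,A_t^{-1/2}dB_t)$, which is $\kappa_{m+1}(t)(\cdot,\ldots,\cdot,A_t^{-1/2}dB_t)$.

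\emph{Drift.} Apply the Leibniz rule to the bilinear expression $\frac12\langle v_t(z),A_t^{-1}v_t(z)\rangle$:
$$\nabla^m\tfrac12\langle v,A^{-1}v\rangle(0)(h_1,\ldots,h_m)=\tfrac12\sum_{J\subseteq[m]}\big\langle \nabla^{|J|}v(0)(h_J),A_t^{-1}\nabla^{|J^c|}v(0)(h_{J^c})\big\rangle .$$
Because $v_t(0)=0$, only $J\neq\emptyset,[m]$ contribute. The remaining terms split into three groups.
\begin{itemize}
\item[(i)] The terms with $|J|=1$, say $J=\{j\}$, give $\langle A_th_j,A_t^{-1}\kappa_m(t)(h_{J^c},\cdot)\rangle=\kappa_m(t)(h_1,\ldots,h_m)$ by symmetry of $\kappa_m$. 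There are $m$ such terms.
\item[(ii)] The terms with $|J^c|=1$ give the same value, again $m$ times. Since $m\geq3$, groups (i) and (ii) are disjoint. Together they contribute $\tfrac12\cdot 2m\,\kappa_m=m\kappa_m$.
\item[(iii)] The terms with $2\leq|J|\leq m-2$ are invariant under $J\leftrightarrow J^c$. Hence half their sum equals the sum over those $J$ with $1\in J$, which is exactly $L_m(t)$ from (\ref{eq_lower_cumulants}).
\end{itemize}
With the minus sign in front of the drift this yields (\ref{eq_cumulant_dynamics}).

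The step I expect to be delicate is justifying that the Itô differential commutes with $\nabla_z^m$ at $z=0$. The algebra above is routine; the interchange is not automatic. I would handle it using the compact support of $\mu$. Write (\ref{eq_loglaplace_dynamics}) in integrated form for $z$ in a fixed ball. The moment bounds then give almost surely continuous, locally uniform bounds on all $z$-derivatives of $\Lambda_t$ and $v_t$, together with the coefficients $A_t^{-1/2},A_t^{-1}$, up to a stopping time. Differentiation under the $dt$ integral is then ordinary, and differentiation under the stochastic integral follows from a stochastic Fubini argument, or from the fact that the $z$-derivatives of the integrand are continuous semimartingale families.

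If that route is awkward, I would fall back on an algebraic argument. Each $\kappa_m(t)$ is a fixed polynomial in the moments $\int x^{\otimes k}d\mu_t$, and each moment has the explicit Itô differential (\ref{eq_localization_test}). So $d\kappa_m(t)$ can be computed by the multivariate Itô formula, which is legitimate. Since that Itô computation is formally identical to differentiating the identity for $\Lambda_t(z)$, it must produce the same formula.
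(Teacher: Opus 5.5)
Your proposal is correct and follows essentially the same route as the paper. Both differentiate the third line of (\ref{eq_loglaplace_dynamics}) $m$ times at $z=0$ using $v_t(0)=0$ and $\nabla^r v_t(0)[h_1,\ldots,h_r]=\kappa_{r+1}(t)(h_1,\ldots,h_r,\cdot)$, and split the Leibniz expansion of $\frac12\langle v_t,A_t^{-1}v_t\rangle$ into the $2m$ terms with $|J|\in\{1,m-1\}$, which give $m\kappa_m$, and the $J\leftrightarrow J^c$ pairs with $2\leq|J|\leq m-2$, which give $L_m$. The paper performs the interchange of $\nabla_z^m$ with the It\^o differential without comment, so your justification of that step is an addition rather than a departure.
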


\begin{proof}
We need to compute the $m^{th}$-spatial derivatives at $z= 0$ of the expression on the third 
line of (\ref{eq_loglaplace_dynamics}).
Fix a time $t \geq 0$. 
Then $v_t(0)=0$, $\nabla v_t(0)=A_t$,
and, for $r\geq1$,
$$
 \langle\nabla^rv_t(0)[h_1,\ldots,h_r],w\rangle
 =\kappa_{r+1}(t)(h_1,\ldots,h_r,w).
$$
Thus, 
$$
 \left\langle A_t^{-1/2}\nabla^mv_t(0)[h_1,\ldots,h_m],dB_t\right\rangle
 =\kappa_{m+1}(t)(h_1,\ldots,h_m,A_t^{-1/2}dB_t).
$$
This gives the martingale coefficient in (\ref{eq_cumulant_dynamics}).
 For the drift, write
$$
 |A_t^{-1/2}v_t(z)|^2=\langle v_t(z),A_t^{-1}v_t(z)\rangle.
$$
Recall that $a_t$ and $A_t$ do not depend on $z$.
The Leibniz rule gives
\begin{equation}
 \begin{split}
 \nabla^m\langle v_t,A_t^{-1}v_t\rangle(0)[h_1,\ldots,h_m]
 =\sum_{\varnothing\neq J\subsetneq[m]}
 \Big\langle\kappa_{|J|+1}(t)(h_J,\cdot),
 A_t^{-1}\kappa_{m-|J|+1}(t)(h_{J^c},\cdot)\Big\rangle.
 \end{split}
 \label{eq_cumulant_product_rule}
\end{equation}
Indeed, each subset $J$ specifies which derivatives fall on the first factor;
the remaining derivatives fall on the second. The empty and full
subsets give zero because $v_t(0)=0$.

\medskip Let us analyze the summands in (\ref{eq_cumulant_product_rule}) in which $|J| \in \{1, m-1\}$.
When $m\geq3$, there are $2m$ such summands. 
Observe that each such summand equals $\kappa_m(t)(h_1,\ldots,h_m)$, since
$$
 \left\langle A_th_1,
       A_t^{-1}\kappa_m(t)(h_2,\ldots,h_m,\cdot)\right\rangle
 =\kappa_m(t)(h_1,\ldots,h_m).
$$
We group the other summands, with $2 \leq |J| \leq m-2$ in $L_m(t)$. Indeed, observe that they occur in equal pairs indexed by $J$ and $J^c$.
Keeping the representative containing $1$ gives
$L_m(t)(h_1,\ldots,h_m)$. This  proves (\ref{eq_cumulant_dynamics}).
\end{proof}

\medskip
For a tensor $T$ of order $r$, define a time-dependent norm by
\begin{equation}
 |T|_t^2
 =|T(A_t^{-1/2}\cdot,\ldots,A_t^{-1/2}\cdot)|^2
 =\langle T,(A_t^{-1})^{\otimes r}T\rangle.
 \label{eq_covariance_norm}
\end{equation}
Here, for $M \in \RR^{n \times n}$ and a tensor $T$ with $k$ indices,
we write $$ M^{\otimes k} T (h_1, \ldots, h_k) = T (M h_1, \ldots, M h_k). $$
Write $\langle\cdot,\cdot\rangle_t$ for the
corresponding inner product. Fix a deterministic unit vector
$u\in\RR^n$ and set
\begin{equation}
 \cE_m(t)=|\kappa_m(t)(u)|_t^2\qquad(m\geq2).
 \label{eq_cumulant_energy}
\end{equation}
That is, we 
compute the $| \cdot |_t$-norm
of the $(m-1)$-tensor $\kappa_m(t)(u)$.

\medskip
Let $\nu_t$ be the push-forward of $\mu_t$ under the affine map
$x\mapsto A_t^{-1/2}(x-a_t)$. Then $\nu_t$ is an isotropic, compactly
supported log-concave probability measure. The transformation rule
for cumulants is
$$
 \kappa_m^{\nu_t}(h_1,\ldots,h_m)
 =\kappa_m(t)(A_t^{-1/2}h_1,\ldots,A_t^{-1/2}h_m),
 \qquad m\geq2.
$$
In particular, for every $v\in\RR^n$,
\begin{equation}
 |\kappa_m(t)(v)|_t^2
 =|\kappa_m^{\nu_t}(A_t^{1/2}v)|^2
 \leq b\langle A_tv,v\rangle
 \label{eq_affine_cumulant}
\end{equation}
in the case where $b > 0$ is such that $R_m^\nu\leq b\,\id$ for all compactly supported isotropic
log-concave measures $\nu$. Using (\ref{eq_mean_covariance}) and
Lemma~\ref{lem_third_cumulant}, we have
\begin{equation}
 \begin{gathered}
 \cE_2(t)=\langle A_tu,u\rangle,\qquad \EE\cE_2(t)=e^{-t},\\
 \cE_3(t)\leq8\langle A_tu,u\rangle,\qquad
 \EE\cE_3(t)\leq8e^{-t}.
 \end{gathered}
 \label{eq_initial_energies}
\end{equation}

\medskip
We will also use the following matrix form of the same third-order
bound:
\begin{equation}
 \sum_{i=1}^n H_{i,t}A_t^{-1}H_{i,t}\leq8A_t.
 \label{eq_covariance_noise_bound}
\end{equation}
Indeed, for every $v\in\RR^n$, the quadratic form on the left is
$$
 \sum_{i,j=1}^n
 \kappa_3(t)(v,A_t^{-1/2}e_j,A_t^{-1/2}e_i)^2
 =|\kappa_3(t)(v)|_t^2\leq8\langle A_tv,v\rangle,
$$
by (\ref{eq_affine_cumulant}) and Lemma~\ref{lem_third_cumulant}.

\section{An inequality for cumulant energies}
\label{sec_energy}

We continue with the construction and notation of 
Section~\ref{sec_cumulants}. In particular, $u \in \RR^n$ is a fixed unit vector.
 The main estimate is the following
lower bound for the drift of $\cE_m$. Here and below, the drift of
an It\^o process denotes the coefficient of $dt$ in its It\^o differential.

\begin{lemma}
\label{lem_energy_drift}
For every $m\geq3$,
\begin{equation}
 \drift\cE_m(t)
 \geq\frac12\cE_{m+1}(t)-Cm^2\cE_m(t)
       -2\langle\kappa_m(t)(u),L_m(t)(u)\rangle_t,
 \label{eq_energy_drift}
\end{equation}
where $C\geq1$ is a universal constant.
\end{lemma}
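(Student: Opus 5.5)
We apply It\^o's formula to $\cE_m(t)=\langle \kappa_m(t)(u),(A_t^{-1})^{\otimes(m-1)}\kappa_m(t)(u)\rangle$. It is a smooth function of two It\^o processes: the tensor $\kappa_m(t)(u)$, with differential given by Lemma~\ref{lem_cumulant_dynamics}, and the matrix $A_t^{-1}$. From \eqref{eq_covariance_dynamics} and It\^o's formula for the inverse,
$$ dA_t^{-1}=-\sum_i A_t^{-1}H_{i,t}A_t^{-1}dB_{t,i}+A_t^{-1}dt+\sum_i A_t^{-1}H_{i,t}A_t^{-1}H_{i,t}A_t^{-1}dt. $$
The drift of $\cE_m$ then splits into five groups of terms.

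\textbf{(i) Quadratic variation of $\kappa_m(u)$.} This group equals $\sum_i|\kappa_{m+1}(t)(u,\cdot,\ldots,\cdot,A_t^{-1/2}e_i)|_t^2=\cE_{m+1}(t)$, since summing over $i$ produces exactly the $|\cdot|_t$-norm of the $m$-tensor $\kappa_{m+1}(t)(u)$. This is the positive term we keep.

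\textbf{(ii) Drift of $\kappa_m$.} Contributes $-2m\cE_m-2\langle\kappa_m(u),L_m(u)\rangle_t$, which appear as such in \eqref{eq_energy_drift}.

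\textbf{(iii) Drift of $(A_t^{-1})^{\otimes(m-1)}$.} The $A_t^{-1}dt$ pieces give $+(m-1)\cE_m\ge0$. The pieces $A^{-1}HA^{-1}HA^{-1}$ are positive semidefinite, so they contribute nonnegatively and can be dropped. The quadratic variation between different tensor factors of $(A_t^{-1})^{\otimes(m-1)}$ gives $(m-1)(m-2)$ terms. Writing $\tilde H_i=A_t^{-1/2}H_{i,t}A_t^{-1/2}$, each such term is bounded in absolute value by $\sum_i\|\tilde H_i\|_{op}^2\cE_m$-type quantities. Here I need \eqref{eq_covariance_noise_bound}, which reads $\sum_i\tilde H_i^2\le 8\,\id$; together with Cauchy--Schwarz (contracting two slots with $\tilde H_i$ and $\tilde H_i$) this bounds each such term by $8\cE_m$. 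So this group is at least $-8m^2\cE_m$.

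\textbf{(iv) Cross variation between $d\kappa_m(u)$ and $dA_t^{-1}$.} There are $2(m-1)$ terms of the form
$$ -\sum_i\big\langle \tilde\kappa_{m+1}(u,\cdot,\ldots,e_i),\ \tilde H_i \text{ acting on one slot of } \tilde\kappa_m(u)\big\rangle, $$
where the tilde denotes the normalization by $A_t^{-1/2}$. This is the main obstacle, because it pairs the good term $\cE_{m+1}$ against $\cE_m$. I would treat it by Cauchy--Schwarz over the joint index $(i,\text{rest})$. The first factor has squared norm $\cE_{m+1}$. The second factor is $\sum_i|\tilde H_i\tilde\kappa_m(u)|^2$, where $\tilde H_i$ acts in one slot, and this is at most $8\cE_m$ by $\sum_i\tilde H_i^2\le8\,\id$. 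Then AM--GM gives each term $\ge -\frac{1}{4(m-1)}\cE_{m+1}-C(m-1)\cE_m$, and summing over the $2(m-1)$ terms costs at most $\frac12\cE_{m+1}+Cm^2\cE_m$.

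\textbf{(v) Collecting.} Adding (i)--(iv) yields
$$ \drift\cE_m\ge \cE_{m+1}-\tfrac12\cE_{m+1}-Cm^2\cE_m-2\langle\kappa_m(u),L_m(u)\rangle_t, $$
which is \eqref{eq_energy_drift}. The only care needed beyond the obstacle in (iv) is bookkeeping of which slot the $dB$ index sits in; symmetry of cumulants makes all slot choices equivalent.
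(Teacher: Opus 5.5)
Your proposal is correct and follows essentially the paper's proof: It\^o's formula for $\langle T_t,(A_t^{-1})^{\otimes(m-1)}T_t\rangle$ with $T_t=\kappa_m(t)(u)$. You keep $\cE_{m+1}$ from the quadratic variation of $T_t$, and absorb the $dT_t$--$dA_t$ cross variation using Cauchy--Schwarz, $\sum_i\widetilde H_i^2\le 8\,\id$ and AM--GM, at a cost of $\frac12\cE_{m+1}+O(m^2)\cE_m$. The differences are cosmetic. The paper normalizes to $A=\id$ via $F(A,T)=F(PAP,P^{\otimes(m-1)}T)$ and discards all second-order terms coming from $A$ at once, since together they equal the nonnegative quantity $\frac12\sum_i|\Sigma_{\widetilde H_i}\widetilde T|^2+\frac12\sum_i\sum_s|\widetilde H_i^{(s)}\widetilde T|^2$. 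You instead bound the inter-slot terms separately by Cauchy--Schwarz, which costs only another harmless $O(m^2)\cE_m$; this correctly relies on $\sum_i\widetilde H_i^2\le 8\,\id$ rather than on $\sum_i\|\widetilde H_i\|_{op}^2$, which is not dimension-free.
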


\begin{proof}
Write
$$
 T_t=\kappa_m(t)(u),\qquad
 W_{i,t}=\kappa_{m+1}(t)(u,\cdot,\ldots,\cdot,A_t^{-1/2}e_i).
$$
These tensors have order $m-1$. Let us recall the dynamics of $A_t$ and $T_t = \kappa_m(t)$. That is, by
(\ref{eq_covariance_dynamics}) and Lemma~\ref{lem_cumulant_dynamics},
\begin{equation}
 \begin{cases}
 dA_t=\displaystyle\sum_{i=1}^nH_{i,t}\,dB_{t,i}-A_t\,dt,\\[3pt]
 dT_t=\displaystyle\sum_{i=1}^nW_{i,t}\,dB_{t,i}
             -(mT_t+L_m(t)(u))\,dt.
 \end{cases}
 \label{eq_energy_pair}
\end{equation}
For a positive definite symmetric matrix $A \in \RR^{n \times n}$ and a tensor $T$ of
order $m-1$, set
$$
 F(A,T)=\langle T,(A^{-1})^{\otimes(m-1)}T\rangle.
$$
Thus $\cE_m(t)=F(A_t,T_t)$, and It\^o's formula gives
\begin{equation}
 \begin{split}
 \drift\cE_m(t)
 ={}&DF(A_t,T_t)[-A_t,-mT_t-L_m(t)(u)]\\
 &+\frac12\sum_{i=1}^nD^2F(A_t,T_t)
       [(H_{i,t},W_{i,t}),(H_{i,t},W_{i,t})].
 \end{split}
 \label{eq_energy_ito}
\end{equation}
Here $DF$ and $D^2F$ denote the first and second differentials
with respect to both arguments of $F$.

\medskip
We will compute the derivatives of $F(A, T)$ at $A=\id$.
Let us first show how to reduce to this case by a linear change of variables.
For a fixed symmetric positive definite matrix $P$,
\begin{equation}
 F(A,T)=F(PAP,P^{\otimes(m-1)}T).
 \label{eq_energy_invariance}
\end{equation}
The identity follows from $P(PAP)^{-1}P=A^{-1}$.
For a symmetric matrix $H$ and a tensor $W$, the identity gives
\begin{equation}|
 F(A+\eps H,T+\eps W)
 =F(PAP+\eps PHP,P^{\otimes(m-1)}T+\eps P^{\otimes(m-1)}W).
\label{eq_1932} \end{equation}
Differentiating (\ref{eq_1932}) with respect to $\eps$ allows us to relate the derivative of $F$
at $(A,T)$ with the derivative at $(PAP,P^{\otimes(m-1)}T)$.

\medskip
Fix a time $t$, suppress time subscripts, and choose $P=A^{-1/2}$.
Set
\begin{equation}
 \begin{aligned}
 \widetilde T&=P^{\otimes(m-1)}T,&
 \widetilde H_i&=PH_iP,\\
 \widetilde W_i&=P^{\otimes(m-1)}W_i,&
 \widetilde L&=P^{\otimes(m-1)}L_m(t)(u).
 \end{aligned}
 \label{eq_normalized_coefficients}
\end{equation}
Since $PAP=\id$, equation (\ref{eq_energy_ito}) becomes
\begin{equation}
 \begin{split}
 \drift\cE_m(t)
 ={}&DF(\id,\widetilde T)[-\id,-m\widetilde T-\widetilde L]\\
 &+\frac12\sum_{i=1}^nD^2F(\id,\widetilde T)
       [(\widetilde H_i,\widetilde W_i),
        (\widetilde H_i,\widetilde W_i)].
 \end{split}
 \label{eq_normalized_ito}
\end{equation}

\medskip
For a symmetric matrix $H$, put
$$
 H^{(s)}=\id^{\otimes(s-1)}\otimes H\otimes\id^{\otimes(m-1-s)},
 \qquad
 \Sigma_H=\sum_{s=1}^{m-1}H^{(s)}.
$$
Thus $H^{(s)}$ acts on the $s$th input of a tensor of order $m-1$.
These operators are self-adjoint and they commute with one another. Expanding $(\id+\eps H)^{-1}$ in each tensor factor,
we obtain
\begin{equation}
 \begin{split}
 ((\id+\eps H)^{-1})^{\otimes(m-1)}
 ={}&\id-\eps\Sigma_H\\
 &+\eps^2\left(\sum_{s=1}^{m-1}(H^{(s)})^2
       +\sum_{1\leq r<s\leq m-1}H^{(r)}H^{(s)}\right)+O(\eps^3)\\
 ={}&\id-\eps\Sigma_H
   +\frac{\eps^2}{2}\left(\Sigma_H^2
                    +\sum_{s=1}^{m-1}(H^{(s)})^2\right)+O(\eps^3).
 \end{split}
 \label{eq_inverse_tensor_expansion}
\end{equation}
Consequently,
\begin{align} 
 \label{eq_joint_energy_expansion}
  F(\id+\eps H,\widetilde T+\eps W)
 & =|\widetilde T|^2
 +\eps\left(2\langle\widetilde T,W\rangle
            -\langle\widetilde T,\Sigma_H\widetilde T\rangle\right)\\
 &+\eps^2\left(|W|^2-2\langle W,\Sigma_H\widetilde T\rangle
          +\frac12|\Sigma_H\widetilde T|^2
          +\frac12\sum_{s=1}^{m-1}|H^{(s)}\widetilde T|^2\right)
 +O(\eps^3). \nonumber
 \end{align}
The coefficients of $\eps$ and $\eps^2$ give the first derivative
and half the second derivative, respectively. Plugging
(\ref{eq_joint_energy_expansion}) into (\ref{eq_normalized_ito}) and
using $\Sigma_{-\id}=-(m-1)\id$ yields
\begin{align}
 \drift\cE_m(t)
 ={}&-(m+1)|\widetilde T|^2
       -2\langle\widetilde T,\widetilde L\rangle
       +\sum_{i=1}^n|\widetilde W_i|^2
       -2\sum_{i=1}^n
          \langle\widetilde W_i,\Sigma_{\widetilde H_i}\widetilde T\rangle
       \notag\\
 &+\frac12\sum_{i=1}^n|\Sigma_{\widetilde H_i}\widetilde T|^2
       +\frac12\sum_{i=1}^n\sum_{s=1}^{m-1}
                      |\widetilde H_i^{(s)}\widetilde T|^2
       \label{eq_exact_energy_drift}\\
 \geq{}&\frac12\sum_{i=1}^n|\widetilde W_i|^2
       -(m+1)|\widetilde T|^2
       -2\langle\widetilde T,\widetilde L\rangle
       -2\sum_{i=1}^n|\Sigma_{\widetilde H_i}\widetilde T|^2.
       \notag
\end{align}
Here, the inequality was obtained  by 
discarding the last two nonnegative terms in the
equality and using the inequality
$2|\langle U,V\rangle|\leq|U|^2/2+2|V|^2$.

\medskip
Recall from (\ref{eq_normalized_coefficients}) 
that 
\begin{equation}
 |\widetilde T|^2=\cE_m(t),\qquad
 \langle\widetilde T,\widetilde L\rangle
     =\langle\kappa_m(t)(u),L_m(t)(u)\rangle_t,
 \qquad
 \sum_{i=1}^n|\widetilde W_i|^2=\cE_{m+1}(t).
 \label{eq_energy_identifications}
\end{equation}
Inequality (\ref{eq_covariance_noise_bound}) gives
$$
 \sum_{i=1}^n\widetilde H_i^2
 =P\left(\sum_{i=1}^nH_iA^{-1}H_i\right)P\leq8\,\id.
$$
Thus, for each $s$,
$$
 \sum_{i=1}^n|\widetilde H_i^{(s)}\widetilde T|^2
 =\left\langle\widetilde T,
       \left(\sum_{i=1}^n\widetilde H_i^2\right)^{(s)}
       \widetilde T\right\rangle
 \leq8|\widetilde T|^2.
$$
By using the Cauchy--Schwarz inequality for  $s=1,\ldots,m-1$ we obtain 
\begin{equation}
 \sum_{i=1}^n|\Sigma_{\widetilde H_i}\widetilde T|^2
 \leq(m-1)\sum_{s=1}^{m-1}\sum_{i=1}^n
                  |\widetilde H_i^{(s)}\widetilde T|^2
 \leq8(m-1)^2\cE_m(t).
 \label{eq_metric_error_bound}
\end{equation}
Substitution into (\ref{eq_exact_energy_drift}) proves
(\ref{eq_energy_drift}), since $m+1+16(m-1)^2\leq Cm^2$.
\end{proof}

Equation (\ref{eq_energy_drift}) allows for a certain inductive scheme on the paramater $m$,
which will be our next step.

\begin{lemma}
\label{lem_integrated_energy}
Let $m\geq3$, and assume that
$$
 \int_0^\infty\EE\cE_m(t)\,dt<\infty,
 \qquad
 \int_0^\infty\EE|L_m(t)(u)|_t^2\,dt<\infty.
$$
Then,
\begin{align} \nonumber
 \cE_m(0) & +\frac12\int_0^\infty\EE\cE_{m+1}(t)\,dt
 \\ & \leq Cm^2\int_0^\infty\EE\cE_m(t)\,dt +2\left(\int_0^\infty\EE\cE_m(t)\,dt\right)^{1/2}
       \left(\int_0^\infty\EE|L_m(t)(u)|_t^2\,dt\right)^{1/2}.
        \label{eq_integrated_energy}
 \end{align}
\end{lemma}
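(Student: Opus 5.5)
We integrate the drift inequality of Lemma~\ref{lem_energy_drift} in time and take expectations. Write the It\^o decomposition $d\cE_m(t)=dM_t+\drift\cE_m(t)\,dt$, where $M_t$ is a local martingale. Since $\cE_m\geq0$, for any $T>0$ we have
$$
 0\leq\cE_m(T)=\cE_m(0)+M_T+\int_0^T\drift\cE_m(t)\,dt .
$$
Inserting the lower bound (\ref{eq_energy_drift}) and rearranging gives
$$
 \cE_m(0)+\frac12\int_0^T\cE_{m+1}\,dt\leq -M_T+Cm^2\int_0^T\cE_m\,dt+2\int_0^T\langle\kappa_m(t)(u),L_m(t)(u)\rangle_t\,dt .
$$
Here we dropped the term $\cE_m(T)\geq0$ on the left. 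Note that $\cE_m(0)$ is deterministic.

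To handle $M_T$, we use a localization. Let $\tau_k$ be stopping times increasing to infinity such that $M_{t\wedge\tau_k}$ is a true martingale. Since $\mu$ is compactly supported, all moments of $\mu_t$ are bounded, and $A_t$ is nondegenerate on bounded time intervals, so such a sequence exists. Replace $T$ by $T\wedge\tau_k$ and take expectations, so the martingale term vanishes. On the right-hand side, split the inner-product term and bound its absolute value. Every remaining integrand is then nonnegative, and extending the integrals to $[0,T]$ on the right only increases them. On the left, the term $\int_0^{T\wedge\tau_k}\cE_{m+1}\,dt$ is nonnegative and increasing in $k$, so monotone convergence applies as $k\to\infty$ and then as $T\to\infty$. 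This yields
$$
 \cE_m(0)+\frac12\int_0^\infty\EE\cE_{m+1}\,dt\leq Cm^2\int_0^\infty\EE\cE_m\,dt+2\int_0^\infty\EE\,|\langle\kappa_m(t)(u),L_m(t)(u)\rangle_t|\,dt .
$$

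For the last term, apply the Cauchy--Schwarz inequality for $\langle\cdot,\cdot\rangle_t$ pointwise:
$$
 |\langle\kappa_m(t)(u),L_m(t)(u)\rangle_t|\leq\cE_m(t)^{1/2}\,|L_m(t)(u)|_t .
$$
Then apply Cauchy--Schwarz in $L^2(dt\otimes d\PP)$ on $[0,\infty)\times\Omega$. By the two finiteness hypotheses, the product is bounded by
$$
 \left(\int_0^\infty\EE\cE_m\,dt\right)^{1/2}\left(\int_0^\infty\EE|L_m(t)(u)|_t^2\,dt\right)^{1/2},
$$
which gives (\ref{eq_integrated_energy}).

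The main obstacle is the rigorous justification of the localization step. The key point is that we only need lower bounds on $\cE_m(T\wedge\tau_k)$, namely $\cE_m\geq0$, together with monotone convergence for nonnegative terms. Because of this, no uniform integrability of $M_t$ or control of $\cE_m(T)$ as $T\to\infty$ is required. The only delicate part is the existence of the localizing sequence, which follows from the regularity of the process as discussed in \cite{eldan,Chen}.
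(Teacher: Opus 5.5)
\textbf{Gap: sign error at the first step.} From $\cE_m(T)=\cE_m(0)+M_T+\int_0^T\drift\cE_m(t)\,dt$ and the lower bound (\ref{eq_energy_drift}) on the drift, the correct rearrangement is
\[
 \cE_m(0)+\frac12\int_0^T\cE_{m+1}\,dt\leq \cE_m(T)-M_T+Cm^2\int_0^T\cE_m\,dt+2\int_0^T\langle\kappa_m(t)(u),L_m(t)(u)\rangle_t\,dt .
\]
So $\cE_m(T)$ sits on the \emph{right-hand side} of an upper bound. Nonnegativity of $\cE_m(T)$ does not let you discard it. Your starting inequality $0\leq\cE_m(T)$ only produces a lower bound on $\cE_m(0)$, which is the wrong direction.

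Your displayed inequality without this term is in fact false. Let $T\downarrow0$ in it; the martingale term tends to $0$ by path continuity, and so do the integrals. You would obtain $\cE_m(0)=|\kappa_m^\mu(u)|^2\leq0$. This fails, for example, for $m=3$ and any isotropic log-concave $\mu$ with a nonzero third cumulant in direction $u$. Your closing remark that ``no control of $\cE_m(T)$ as $T\to\infty$ is required'' is therefore exactly backwards.

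\textbf{The missing idea.} The one nontrivial point is controlling the boundary term $\EE\cE_m(T)$. This is where the paper uses the hypothesis $\int_0^\infty\EE\cE_m(t)\,dt<\infty$, beyond its role in Cauchy--Schwarz.
\begin{itemize}
\item The hypothesis forces $\liminf_{T\to\infty}\EE\cE_m(T)=0$.
\item You then let $T\to\infty$ along a sequence $T_j$ with $\EE\cE_m(T_j)\to0$.
\item The nonnegative integrals are handled by monotone convergence, as you do.
\end{itemize}

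In your localized version there is one more step once the term is restored. You must pass from $\EE\cE_m(T\wedge\tau_k)$ to $\EE\cE_m(T)$ as $k\to\infty$, which requires uniform integrability. For compactly supported $\mu$ this is available: $\cE_m(t)=|\kappa_m^{\nu_t}(A_t^{1/2}u)|^2$ is bounded by a dimension-dependent constant times $\langle A_tu,u\rangle$, using moment bounds for isotropic log-concave measures. With the boundary term restored and handled this way, the rest of your argument goes through: the pointwise Cauchy--Schwarz, then Cauchy--Schwarz in $L^2(dt\otimes d\PP)$, and monotone convergence on the left.
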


\begin{proof}
Taking expectations in (\ref{eq_energy_drift}) and integrating gives
\begin{align}
 \cE_m(0)+\frac12\int_0^T\EE\cE_{m+1}(t)\,dt
 \leq{}&\EE\cE_m(T)+Cm^2\int_0^T\EE\cE_m(t)\,dt\notag\\
 &+2\int_0^T\EE\langle\kappa_m(t)(u),L_m(t)(u)\rangle_t\,dt.
 \label{eq_finite_energy_integral}
\end{align}
By the Cauchy--Schwarz inequality, the last integral
is bounded in absolute value by
$$
 \left(\int_0^T\EE\cE_m(t)\,dt\right)^{1/2}
 \left(\int_0^T\EE|L_m(t)(u)|_t^2\,dt\right)^{1/2}.
$$
The integrability of $\EE\cE_m$ and the monotone convergence theorem 
allow us to let $T$ tend to infinity and conclude (\ref{eq_integrated_energy}).
\end{proof}

\section{The cumulant bound}
\label{sec_induction}

\begin{proof}[Proof of Theorem~\ref{thm_cumulants}]
Fix a dimension $n \geq 1$.
Choose a universal constant $K\geq1$, to be specified below, and set
\begin{equation}
 b_m=K^{m-1}((m-1)!)^2\qquad(m\geq2).
 \label{eq_induction_constants}
\end{equation}
We will prove that for every compactly-supported, isotropic,
log-concave measure $\mu$ in $\RR^n$, and for every unit vector $u \in \RR^n$,
\begin{equation}
 \cE_m(0)+\frac12\int_0^\infty\EE\cE_{m+1}(t)\,dt\leq b_m,
 \qquad \text{for} \ m\geq3.
 \label{eq_cumulant_induction}
\end{equation}

\medskip
Fix $m\geq 3$ and assume that (\ref{eq_cumulant_induction}) has been proven for all smaller $m$; the case $m =2$ follows immediately from 
(\ref{eq_initial_energies}).
Let $\mu$ be a compactly-supported, isotropic
log-concave measure $\mu$ in $\RR^n$. 
By the induction hypothesis, for any $2\leq r<m$,
\begin{equation}
\int_0^\infty\EE\cE_{r+1}(t)\,dt\leq2b_r
 \qquad \text{and} \qquad
  R_r^\nu\leq b_r\,\id,
 \label{eq_induction_inputs}
\end{equation}
where $\cE_{r+1}(t)$ is the cumulant energy associated with $\mu$,
and where $\nu$ is any compactly-supported, isotropic
log-concave probability measure in $\RR^n$. In particular,
\begin{equation}
 \int_0^\infty\EE\cE_m(t)\,dt\leq2b_{m-1}.
 \label{eq_current_energy_integral}
\end{equation}
In order to apply Lemma~\ref{lem_integrated_energy}, we need to estimate the time
integral of $\EE |L_m(t)(u)|_t^2$.

\medskip
Recall from (\ref{eq_lower_cumulants}) that, setting $h_1=u$,
\begin{equation}
 L_m(t)(u,h_2,\ldots,h_m)
 =\sum_{\substack{J\subset[m],\ 1\in J\\2\leq|J|\leq m-2}}
 \Big\langle\kappa_{|J|+1}(t)(h_J,\cdot),
 A_t^{-1}\kappa_{m-|J|+1}(t)(h_{J^c},\cdot)\Big\rangle.
\label{eq_2003} \end{equation}
We first bound each summand individually, using the induction bounds of \eqref{eq_induction_inputs}. 
The integral estimate from (\ref{eq_induction_inputs}) will be used for bounding the factor containing $u$, while the second factor will be bounded by the pointwise estimate
from (\ref{eq_induction_inputs}).

\medskip
For a given size $a=|J|$, the various summands in (\ref{eq_2003}) differ only by a permutation
of $h_2,\ldots,h_m$. We may therefore take $J=\{1,\ldots,a\}$,
where $2\leq a\leq m-2$, and write
\begin{equation}
 \begin{split}
 P_a(t)(h_2,\ldots,h_m)
 =\Big\langle\kappa_{a+1}(t)(u,h_2,\ldots,h_a,\cdot),
 A_t^{-1}\kappa_{m-a+1}(t)(h_{a+1},\ldots,h_m,\cdot)\Big\rangle.
 \end{split}
 \label{eq_one_cumulant_product}
\end{equation}
By (\ref{eq_affine_cumulant}) and the first inequality in
the induction hypothesis,
(\ref{eq_induction_inputs}),
\begin{equation}
 |\kappa_r(t)(v)|_t^2\leq b_r\langle A_tv,v\rangle
 \qquad(2\leq r<m,\ v\in\RR^n).
 \label{eq_posterior_induction}
\end{equation}
This holds almost-surely for $\mu_t$, since its centered push-forward 
under $A_t^{-1/2}$ is again compactly supported, isotropic and
log-concave.

\medskip
Fix $t$ and write $w_j=A_t^{-1/2}e_j$. In the sum
$$
 |P_a(t)|_t^2
 =\sum_{i_2,\ldots,i_m=1}^n
       P_a(t)(w_{i_2},\ldots,w_{i_m})^2,
$$
let us first keep $i_2,\ldots,i_a$ fixed and sum over $i_{a+1},\ldots,i_m$. We let $z\in\RR^n$ be the vector
$$
 z =\kappa_{a+1}(t)(u,w_{i_2},\ldots,w_{i_a},\cdot)
$$
and 
$$
 y =\kappa_{m-a+1}(t)(w_{i_{a+1}},\ldots,w_{i_m},\cdot).
$$
Then,
\begin{align*} 
 P_a(t)(w_{i_2},\ldots,w_{i_m})
 & = \langle z, A_t^{-1} y \rangle = \langle A_t^{-1} z,  y \rangle 
 \\ & =\kappa_{m-a+1}(t)(w_{i_{a+1}},\ldots,w_{i_m}, A_t^{-1} z) \\ & =\kappa_{m-a+1}(t)(A_t^{-1} z, w_{i_{a+1}},\ldots,w_{i_m}).
 \end{align*}
Summing over $i_{a+1},\ldots,i_m$ and using
(\ref{eq_posterior_induction}) with $v=A_t^{-1}z$, we obtain
\begin{align}
 \sum_{i_{a+1},\ldots,i_m=1}^n
       P_a(t)(w_{i_2},\ldots,w_{i_m})^2
 &=|\kappa_{m-a+1}(t)(A_t^{-1}z)|_t^2\notag\\
 &\leq b_{m-a+1}\langle A_t^{-1}z,z\rangle\notag\\
 &=b_{m-a+1}\sum_{k=1}^n
       \kappa_{a+1}(t)(u,w_{i_2},\ldots,w_{i_a},w_k)^2.
 \label{eq_partial_product_sum}
\end{align}
The last equality uses $\sum_kw_k \otimes w_k =A_t^{-1}$, so that
$\langle A_t^{-1}z,z\rangle=\sum_k\langle z,w_k\rangle^2$.
Now sum over $i_2,\ldots,i_a$. Together with the sum over $k$, these indices
range over all inputs of $\kappa_{a+1}(t)(u)$. Hence
\begin{equation}
 |P_a(t)|_t^2\leq b_{m-a+1} \cdot  \cE_{a+1}(t).
 \label{eq_product_energy_bound}
\end{equation}
The second inequality in the induction hypothesis (\ref{eq_induction_inputs}), for  $r = a \leq m-1$
gives
\begin{equation}
 \int_0^\infty\EE|P_a(t)|_t^2\,dt
 \leq b_{m-a+1}\int_0^\infty\EE\cE_{a+1}(t)\,dt
 \leq2b_a b_{m-a+1}.
 \label{eq_product_integral_bound}
\end{equation}

\medskip
For each cardinality $a$, all the $\binom{m-1}{a-1}$ subsets $J$ of size $a$
containing $1$ yield tensors obtained from $P_a(t)$ by permuting its
indices. Therefore their $|\cdot|_t$-norms are equal. The triangle inequality thus gives
\begin{align}
 \left(\int_0^\infty\EE|L_m(t)(u)|_t^2\,dt\right)^{1/2}
 &\leq\sum_{a=2}^{m-2}\binom{m-1}{a-1}
       \left(\int_0^\infty\EE|P_a(t)|_t^2\,dt\right)^{1/2}\notag\\
 &\leq\sqrt2\sum_{a=2}^{m-2}\binom{m-1}{a-1}
                       \sqrt{b_a b_{m-a+1}}.
 \label{eq_lower_cumulant_sum}
\end{align}
By (\ref{eq_induction_constants}), every summand has the same value:
\begin{equation}
 \binom{m-1}{a-1}\sqrt{b_a b_{m-a+1}}
 =\frac{(m-1)!}{(a-1)!(m-a)!}
      K^{(m-1)/2}(a-1)!(m-a)!
 =\sqrt{b_m}.
 \label{eq_factorial_cancellation}
\end{equation}
There are exactly $m-3$ summands, so
\begin{equation}
 \left(\int_0^\infty\EE|L_m(t)(u)|_t^2\,dt\right)^{1/2}
 \leq\sqrt2\,(m-3)\sqrt{b_m}.
 \label{eq_lower_cumulant_integral}
\end{equation}
(In particular, if $m=3$ then $L_3 = 0$.)

\medskip
By Lemma~\ref{lem_integrated_energy}, we conclude that
\begin{align*}
 \cE_m(0)+\frac12\int_0^\infty\EE\cE_{m+1}(t)\,dt
 &\leq2Cm^2b_{m-1}+4(m-3)\sqrt{b_{m-1}b_m}\\
 &=b_m\left(\frac{2C}{K}\frac{m^2}{(m-1)^2}
             +\frac4{\sqrt K}\frac{m-3}{m-1}\right).
\end{align*}
 Taking
$K\geq\max\{5,9C,64\}$ proves (\ref{eq_cumulant_induction}). 
This completes the induction, and thus (\ref{eq_cumulant_induction}) holds for all $m$.
Since
$\cE_m(0)=\langle R_m^\mu u,u\rangle$, we obtain
(\ref{eq_cumulant_bound}) for all compactly supported, isotropic, log-concave probability measures in $\RR^n$.
The bound for non-compactly-supported measures follows by a standard approximation argument. 
\end{proof}

\section{Suspension and the proof of Theorem~\ref{thm_main}}
\label{sec_suspension}

We now pass from the cumulant bound to a bound for the Taylor tensors $\cT_d f$
defined in (\ref{eq_1337}). The idea is to use the
cumulant estimate in a larger dimension, by adding a coordinate
which contains a sum of independent copies of $f$.

\begin{proposition}
\label{prop_suspension}
Fix an integer $d\geq2$. Suppose that $b>0$ satisfies
$R_{d+1}^\nu\leq b\,\id$ for every isotropic log-concave probability
measure $\nu$, in every dimension. Then every isotropic regular
log-concave probability measure $\mu$ satisfies
\begin{equation}
 |\cT_d f|^2\leq\frac{2b}{(d!)^2}\|f\|_{L^2(\mu)}^2
 \qquad \text{for any} \ f\in L^2(\mu).
 \label{eq_suspension_bound}
\end{equation}
\end{proposition}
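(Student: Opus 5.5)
The plan is to reduce the estimate to a \emph{linear} test function in a larger dimension, where $\cT_d$ becomes a cumulant of order $d+1$ and the hypothesis $R_{d+1}^\nu\leq b\,\id$ applies directly. I have not worked out the construction of the auxiliary measure $\nu$ (third step below), and that is where this plan may fail.

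\textbf{Linear test functions.} First observe that if $f(x)=\langle v,x\rangle$, then $F_f(z)=\langle v,\nabla\Lambda_\mu(z)\rangle$. Hence $\cT_d f=\frac1{d!}\kappa_{d+1}^\mu(v,\cdot,\ldots,\cdot)$, and
$$|\cT_d f|^2=\frac{1}{(d!)^2}\langle R_{d+1}^\mu v,v\rangle\leq\frac{b}{(d!)^2}|v|^2=\frac{b}{(d!)^2}\|f\|_2^2,$$
by isotropy. Thus the statement holds, even without the factor $2$, for linear $f$. Since $f\mapsto\cT_d f$ is linear and vanishes on constants, and
$$\cT_d f=\int f\,\Phi_d\,d\mu,\qquad \Phi_d(x)=\frac1{d!}\nabla_z^d e^{\langle z,x\rangle-\Lambda_\mu(z)}\big|_{z=0},$$
a duality argument gives $|\cT_d f|\leq\|f\|_2\sup_{|B|=1}\|\langle B,\Phi_d\rangle\|_2$ over symmetric $d$-tensors $B$. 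So it suffices to bound the $L^2(\mu)$ norm of the polynomials $\langle B,\Phi_d\rangle$.

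\textbf{Suspension.} Take $N$ independent copies $X^1,\ldots,X^N$ of $X\sim\mu$. The plan is to build an isotropic log-concave measure $\nu$ in dimension $nN+n$, or $nN+1$, with the following properties.
\begin{itemize}
\item Some $n$ coordinates of $\nu$ carry a copy of $X$ (or of a normalized sum of the $X^i$), so that tilting $\nu$ in those coordinates reproduces the tilts $\mu_z$.
\item Some other coordinate, a \emph{linear} functional $\theta$ of the full vector, has conditional expectation given those $n$ coordinates close to $f$ in $L^2(\mu)$.
\end{itemize}
Then the mixed cumulant $\kappa_{d+1}^\nu(e_\theta,\cdot,\ldots,\cdot)$, restricted to the tilted directions, equals $d!\,\cT_d f$ up to an error that vanishes as $N\to\infty$. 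Applying the hypothesis to $\nu$ gives
$$(d!)^2|\cT_d f|^2\leq b\,\mathrm{Var}(\theta)+o(1).$$
The factor $2$ is the room I would leave for normalizing $\nu$ to be isotropic, for instance by rescaling so that $\mathrm{Var}(\theta)\leq2\|f\|_2^2$.

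\textbf{Main obstacle.} The key step, and the one I expect to be hardest, is making $\nu$ genuinely log-concave while $\theta$ still encodes a non-concave $f$. A single copy cannot work, since $(X,f(X))$ is not log-concave. I would first try to exploit the $N$ copies through a central-limit effect: write $f$ in the eigenbasis of $L$, or approximate it by polynomials, and realize each polynomial chaos of order at most $d$ as a limit of linear functionals of products of independent log-concave vectors. The estimate then needs to hold uniformly in $N$, after which a density argument passes to general $f\in L^2(\mu)$. The final step is an approximation, moving from regular $\mu$ to the compactly supported measures for which the cumulant bound was proved; it should be routine.
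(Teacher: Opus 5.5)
Your treatment of linear $f$ is correct and matches the paper. The gap is the construction of $\nu$: it is the entire content of the proposition, you leave it open, and the target you set for it is not the one that works. You ask for a linear coordinate $\theta$ whose conditional expectation given one block of $n$ coordinates is close to $f$, with $\mathrm{Var}(\theta)\approx\|f\|_2^2$. With a single copy, the natural log-concave coupling $\theta=f(X)+\text{noise}$ forces the noise to be large unless $\nabla^2 f$ is small compared with $a$. No CLT or chaos expansion is needed.

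The paper sets $S=(F_N+\eta_\beta)/\sigma_\beta$, where $F_N=N^{-1/2}\sum_{i=1}^N f(X_i)$, $\eta_\beta$ is independent with density $(\beta/2)e^{-\beta|s|}$, and $\sigma_\beta^2=1+2/\beta^2$.
\begin{itemize}
\item \textbf{Log-concavity.} The joint density of $(S,X_1,\ldots,X_N)$ is proportional to $\exp(-\sum_i V(x_i)-\beta|\sigma_\beta s-N^{-1/2}\sum_i f(x_i)|)$. Its potential is the maximum of $\sum_i(V(x_i)\pm\beta N^{-1/2}f(x_i))\mp\beta\sigma_\beta s$, and both are convex once $\beta N^{-1/2}\sup\|\nabla^2 f\|_{op}\leq a$. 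So log-concavity comes from diluting $f$ against the uniform convexity $\nabla^2V\geq a\,\id$ of a regular $\mu$, a hypothesis your plan never uses. Taking $N$ large lets $\beta\to\infty$, so $\sigma_\beta\to 1$.
\item \textbf{Recovering $\cT_d f$.} Here $\EE[S\mid X_1]=f(X_1)/(\sigma_\beta\sqrt N)$ is small, not close to $f$. The loss is recovered through independence: $\partial_w\Lambda(0,z_1,\ldots,z_N)=\sigma_\beta^{-1}N^{-1/2}\sum_i F_f(z_i)$. Hence $\kappa^\nu_{d+1}(e_0)$ equals exactly $d!\,\sigma_\beta^{-1}N^{-1/2}\cT_d f$ on each of the $N$ diagonal blocks, and vanishes on mixed blocks.
\item \textbf{Role of the Hilbert--Schmidt norm.} The norm defining $R^\nu_{d+1}$ sums the $N$ blocks, giving $|\kappa^\nu_{d+1}(e_0)|^2\geq (d!)^2\sigma_\beta^{-2}|\cT_d f|^2$ with no error term in $N$.
\end{itemize}
Restricting to ``the tilted directions'' of one block, as you propose, loses this factor $N$. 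Tilting a normalized sum of the copies instead replaces $F_f(z)$ by $\sqrt N F_f(z/\sqrt N)$, which kills the order-$d$ term for $d\geq2$.

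There are two further points. First, isotropy of $\nu$ requires $\EE[SX_i]=0$, that is, $f$ orthogonal to linear functions. The convexity step also needs smooth $f$ with bounded Hessian, followed by a density argument. This is why the paper writes $f=c+\ell+h$ with $h$ orthogonal to affine functions. It proves $|\cT_d h|\leq\sqrt b\,\|h\|_2/d!$ by suspension and $|\cT_d\ell|\leq\sqrt b\,\|\ell\|_2/d!$ by your linear argument. The factor $2$ then comes from $(\|\ell\|_2+\|h\|_2)^2\leq 2(\|\ell\|_2^2+\|h\|_2^2)$, not from renormalizing $\nu$. Second, no passage to compactly supported measures is needed inside this proposition, since the hypothesis is assumed for every isotropic log-concave $\nu$.
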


\begin{proof}
Write $d\mu=e^{-V}dx$ with $\nabla^2V\geq a\,\id$, where $a>0$.
First, assume that  $f \in L^2(\mu)$ is a smooth function with a uniformly bounded hessian, such that
\begin{equation}
 \int_{\RR^n} f\,d\mu=0,\qquad \int_{\RR^n} f^2\,d\mu=1,
 \qquad \int_{\RR^n} xf(x)\,d\mu(x)=0.
 \label{eq_suspension_test}
\end{equation}
Let $X_1,\ldots,X_N$ be independent random vectors with law $\mu$. Independently,
let $\eta_\beta$ be a random variable with density $(\beta/2)e^{-\beta|s|}$ on $\RR$,
where $\beta>0$. Set
$$
 F_N=\frac1{\sqrt N}\sum_{i=1}^N f(X_i),\qquad
 \sigma_\beta^2=1+\frac2{\beta^2},\qquad
 S=\frac{F_N+\eta_\beta}{\sigma_\beta}.
$$
The joint density of $(S,X_1,\ldots,X_N)$ on $\RR^{Nn+1}$ is,
up to normalization, 
\begin{equation}
e^{-W_N(s, x_1,\ldots,x_N)} := \exp \left \{ -\sum_{i=1}^N V(x_i)
 -\beta\left|\sigma_\beta s-\frac1{\sqrt N}
                                    \sum_{i=1}^N f(x_i)\right| \right \},
 \label{eq_suspension_potential}
\end{equation}
where $s \in \RR$ and $x_1,\ldots,x_N \in \RR^n$.
Choose $N$ large enough so that
\begin{equation}
 \frac{\beta}{\sqrt N}\sup_x\|\nabla^2f(x)\|_{op}\leq a.
 \label{eq_suspension_dimension}
\end{equation}
Then $W_N$  is convex. Indeed, it is the
maximum of the two functions
$$
 \sum_{i=1}^N\left(V(x_i)\pm\frac\beta{\sqrt N}f(x_i)\right)
 \ \mp\ \beta\sigma_\beta s,
$$
and each is convex in view of  (\ref{eq_suspension_dimension}).
The joint law of $(S,X_1,\ldots,X_N)$ is  isotropic, since  (\ref{eq_suspension_test}) gives
$$
 \EE S=0,\qquad \EE S^2=1,\qquad
 \EE[SX_i]=\frac{\EE[f(X_i)X_i]}{\sigma_\beta\sqrt N}=0.
$$
Consider the logarithmic Laplace transform 
\begin{align*}  \Lambda(w,z_1,\ldots,z_N)  & = \log \EE \exp \left( wS + \sum_{i=1}^N \langle z_i, X_i \rangle \right) 
\\ & = \log \EE \exp \left( w \frac{\eta_\beta}{\sigma_{\beta}}  \right)  + \log \EE \exp \left(   \frac{w}{\sigma_{\beta} \sqrt{N}} \sum_{i=1}^N f(X_i) + \sum_{i=1}^N \langle z_i, X_i \rangle \right),
\end{align*} 
  where $w\in\RR$ and $z_i\in\RR^n$. The logarithmic Laplace transform 
is finite and smooth in a neighborhood of the origin.  Differentiating in $w$, at $w=0$, gives
\begin{equation}
 \frac{\partial\Lambda}{\partial w}(0,z_1,\ldots,z_N)
 =\frac1{\sigma_\beta\sqrt N}
       \sum_{i=1}^N
       \frac{\int f(x)e^{\langle z_i,x\rangle}\,d\mu(x)}
            {\int e^{\langle z_i,x\rangle}\,d\mu(x)}
 =\frac1{\sigma_\beta\sqrt N}\sum_{i=1}^NF_f(z_i).
 \label{eq_suspension_laplace}
\end{equation}
Let $\nu$ be the probability measure on $\RR^{Nn+1}$ which is the law of $(S, X_1,\ldots,X_N)$, and write
$$
 \kappa=\kappa_{d+1}^\nu=\nabla^{d+1}\Lambda(0).
$$
We index the coordinates of $\RR^{Nn+1}$ by $0$, corresponding
to $S$, and by pairs $(i,j)$, corresponding to the $j$th
coordinate of $X_i$, where $1\leq i\leq N$ and $1\leq j\leq n$.
Write $e_0$ and $e_{i,j}$ for the corresponding standard
basis vectors, and $z_{i,j}$ for the $j$th coordinate of $z_i$.
Differentiating (\ref{eq_suspension_laplace}) gives
\begin{equation}
 \begin{split}
 \kappa_{0,(i_1,j_1),\ldots,(i_d,j_d)}
 &=\frac{\partial^{d+1}\Lambda}
         {\partial w\,\partial z_{i_1,j_1}\cdots
                      \partial z_{i_d,j_d}}(0)\\
 &=
 \begin{cases}
 \displaystyle
 \frac{d!}{\sigma_\beta\sqrt N}
       (\cT_d f)_{j_1,\ldots,j_d},
       & i_1=\cdots=i_d,\\[6pt]
 0,    & \text{otherwise}.
 \end{cases}
 \end{split}
 \label{eq_suspension_cumulant}
\end{equation}
Consequently, the order-$d$ tensor
$\kappa(e_0)=\kappa(e_0,\cdot,\ldots,\cdot)$ decomposes as
$$
 \kappa(e_0)
 =\frac{d!}{\sigma_\beta\sqrt N}
   \sum_{i=1}^N\sum_{j_1,\ldots,j_d=1}^n
   (\cT_d f)_{j_1,\ldots,j_d}\,
   e_{i,j_1}\otimes\cdots\otimes e_{i,j_d}
   +U,
$$
where $U$ is supported on entries for which at least one
of the remaining $d$ indices equals $0$. Therefore
$$
 |\kappa(e_0)|^2
 =\frac{(d!)^2}{\sigma_\beta^2N}
      \sum_{i=1}^N|\cT_d f|^2+|U|^2
 =\frac{(d!)^2}{\sigma_\beta^2}|\cT_d f|^2+|U|^2.
$$
Let us now apply the cumulant bound from our hypothesis for the isotropic, log-concave probability measure $\nu$ in $\RR^{nN+1}$, and the unit vector 
$e_0$. We obtain
$$
 b\geq\langle R_{d+1}^\nu e_0,e_0\rangle
   =|\kappa(e_0)|^2
   \geq\frac{(d!)^2}{\sigma_\beta^2}|\cT_d f|^2.
$$
For each $\beta > 0$ we may choose $N$ satisfying
(\ref{eq_suspension_dimension}). Hence, for any $\beta > 0$,
$$ |\cT_d f|\leq \frac{\sigma_{\beta} \sqrt b}{d!} = \frac{\sqrt{1 + 2/\beta^2}  \cdot \sqrt b}{d!}.  $$
Letting $\beta$ tend to infinity we obtain the desired estimate
\begin{equation}
 |\cT_d f|\leq\frac{\sqrt b}{d!} \qquad \qquad \qquad (d \geq 1).
 \label{eq_suspension_orthogonal}
\end{equation}

\medskip
By a standard approximation argument, the functions satisfying (\ref{eq_suspension_test}) and the stated
smoothness conditions are clearly dense in $L^2(\mu)$-sense in the unit sphere of the orthogonal complement 
to the subspace of affine functions on $\RR^n$. 
Therefore 
(\ref{eq_suspension_orthogonal}) holds for every $f \in L^2(\mu)$ of norm one, 
that is orthogonal to all affine functions.

\medskip Next, for a linear function $\ell(x)=\langle v,x\rangle$, isotropicity gives
$\|\ell\|_2=|v|$, and
$$
 F_\ell(z)=\langle v,\nabla\Lambda_\mu(z)\rangle,
 \qquad
 \cT_d\ell=\frac1{d!}\kappa_{d+1}^\mu(v),
$$
where we recall that $\kappa_{d+1}^\mu(v) = \kappa_{d+1}^\mu(v,\cdot,\ldots,\cdot)$.
Hence $|\cT_d\ell|\leq\sqrt b\,\|\ell\|_2/d!$ by our 
cumulant hypothesis. Decompose an arbitrary $f\in L^2(\mu)$ as
$f=c+\ell+h$, with the three terms mutually orthogonal, where $c$
is constant, $\ell$ is linear, and $h$ is orthogonal to affine 
functions. Since $\cT_d c=0$,
$$
 |\cT_d f|
 \leq\frac{\sqrt b}{d!}(\|\ell\|_2+\|h\|_2)
  \leq\frac{\sqrt{2b}}{d!} \sqrt{\|\ell\|_2^2 + \| h \|_2^2} \leq 
 \frac{\sqrt{2b}}{d!}\|f\|_2.
$$
This completes the proof of  (\ref{eq_suspension_bound}).
\end{proof}

\begin{proof}[Proof of Theorem~\ref{thm_main}]
Let $\mu$ be an isotropic, log-concave probability measure on $\RR^n$. 
As indicated in Section~\ref{sec_background},
by a standard approximation argument, when proving
Theorem~\ref{thm_main} we may assume that $\mu$ is regular. 
Theorem~\ref{thm_cumulants}
and Proposition~\ref{prop_suspension}, with
$b=K^d(d!)^2$, give
$$
 |\cT_d f|^2\leq2K^d\|f\|_2^2\leq(2K)^d\|f\|_2^2
 \qquad(d\geq2).
$$
For $d=1$, isotropy gives
$|\cT_1f|=|\int xf(x)\,d\mu(x)|\leq\|f\|_2$.
Thus the hypothesis of Theorem~\ref{thm_1032} holds with
$R=\sqrt{2K}$, and $C_P(\mu)\leq2CK$ is universally bounded.
\end{proof}

\bigskip
\noindent Department of Mathematics, Weizmann Institute of Science, Rehovot, Israel. \\
\textit{e-mail:} \texttt{pierre.bizeul@weizmann.ac.il}

\bigskip
\noindent School of Mathematical Sciences, Tel Aviv University, Tel Aviv, Israel; and \\
Department of Mathematics, Weizmann Institute of Science, Rehovot, Israel. \\
\textit{e-mail:} \texttt{klartagb@tau.ac.il}

\bigskip
\noindent Institut de Mathématiques de Jussieu--Paris Rive Gauche,
Équipe d'Analyse Fonctionnelle, Sorbonne Université, Paris, France. \\
\textit{e-mail:} \texttt{joseph.lehec@imj-prg.fr}

\end{document}